\title{A categorification of Grassmannian cluster algebras}
\date{10 June 2016}
\author{Bernt Tore Jensen, Alastair King and Xiuping Su}
\newcommand{\New}[1]{#1}
\newtheorem{theorem}{Theorem}[section]
\newtheorem{proposition}[theorem]{Proposition}
\newtheorem{lemma}[theorem]{Lemma}
\newtheorem{corollary}[theorem]{Corollary}
\newtheorem{observation}[theorem]{Observation}
\newcommand{\sm}{\small} % makes numbers in tikz pics slightly larger !
\renewcommand{\setminus}{\smallsetminus}
\renewcommand{\epsilon}{\varepsilon}
\renewcommand{\emptyset}{\varnothing}
\renewcommand{\leq}{\leqslant}
\renewcommand{\geq}{\geqslant}
\newcommand{\sfrac}[2]{{\textstyle\frac{#1}{#2}}}
\newcommand{\lra}[1]{\xrightarrow{#1}}
\newcommand{\isom}{\cong}
\newcommand{\sub}{\subseteq}
\newcommand{\from}{\leftarrow}
\newcommand{\eps}{\varepsilon}
\newcommand{\aka}{\emph{a.k.a.}}
\newcommand{\CC}{\mathbb{C}}
\newcommand{\NN}{\mathbb{N}}
\newcommand{\QQ}{\mathbb{Q}}
\newcommand{\ZZ}{\mathbb{Z}}
\newcommand{\N}{\mathcal{N}}
\newcommand{\Atil}{\widetilde{A}}
\newcommand{\Mtil}{\widetilde{M}}
\newcommand{\Ntil}{\widetilde{N}}
\newcommand{\GL}{\operatorname{GL}}
\newcommand{\SL}{\operatorname{SL}}
\newcommand{\CM}{\operatorname{CM}}
\newcommand{\md}{\operatorname{mod}}
\newcommand{\Sub}{\operatorname{Sub}}
\newcommand{\Sym}{\operatorname{Sym}}
\newcommand{\Hom}{\operatorname{Hom}}
\newcommand{\SHom}{\operatorname{\underline{Hom}}}
\newcommand{\End}{\operatorname{End}}
\newcommand{\Ext}{\operatorname{Ext}}
\newcommand{\add}{\operatorname{add}}
\newcommand{\Aut}{\operatorname{Aut}}
\newcommand{\rk}{\operatorname{rk}}
\newcommand{\len}{\operatorname{len}}
\newcommand{\soc}{\operatorname{soc}}
\newcommand{\img}{\operatorname{im}}
\renewcommand{\k}{\mathbb{C}} % base field
\newcommand{\rootlat}{\operatorname{\Lambda}}
\newcommand{\gr}{\operatorname{\delta}}
\newcommand{\op}{^{op}}
\newcommand{\Pluck}[1]{\Phi_{#1}}
\newcommand{\cycint}[1]{[#1]}
\newcommand{\grass}[2]{G_{#1,#2}}
\newcommand{\GCA}[2]{\CC\left[\grass{#1}{#2}\right]}
\newcommand{\tgr}{*}
\newcommand{\dual}{^\vee}
\newcommand{\trun}{\pi}
\newcommand{\Grot}{K}
\newcommand{\maxNC}{\mathcal{S}}
\newcommand{\cluschar}[1]{\varphi_{#1}}
\newcommand{\cluschartil}[1]{\widetilde{\varphi}_{#1}}
\newcommand{\homcc}[1]{\psi_{#1}}
\newcommand{\homog}[2]{\operatorname{hmg}({#1};{#2})}
\newcommand{\dehomog}{\nu}
\newcommand{\irrep}[1]{V_{#1}}
\newcommand{\intvl}[2]{\left\{#1,\ldots ,#2\right\}}
\newcommand{\thfrac}[3]{
\begin{aligned} #1\\ \hline \\[-3.3\jot] #2 \\ \hline \\[-3.3\jot] #3 \end{aligned}
}
\newcommand{\Ab}{\overline{A}}
\newcommand{\Ah}{\widehat{A}}
\newcommand{\Rb}{\overline{R}}
\newcommand{\Rh}{\widehat{R}}
\newcommand{\Zb}{\overline{Z}}
\newcommand{\Zh}{\widehat{Z}}
\begin{document} 
\maketitle

\begin{abstract}
We describe a ring whose category of Cohen-Macaulay modules 
provides an additive categorification \New{of}
the cluster algebra structure on the homogeneous coordinate ring of the 
Grassmannian of $k$-planes in $n$-space.
More precisely, there is a cluster character defined on the category 
which maps the rigid indecomposable objects to the cluster variables
and the maximal rigid objects to clusters. 
This is proved by showing that the quotient of this category by a single 
projective-injective object is Geiss-Leclerc-Schr\"oer's category Sub~$Q_k$, 
which categorifies the coordinate ring of the big cell in this Grassmannian.
\end{abstract}

%================================================================
\section{Introduction} % Sec 1
%================================================================

Let $\GCA{k}{n}$ denote the homogeneous coordinate ring
of the Grassmannian of $k$-dimensional quotient spaces of $\k^n$.
As a representation of $\GL_n(\k)$, we know (e.g. by the Borel-Weil Theorem) that
\begin{equation}\label{eq:repsum}
  \GCA{k}{n} = \bigoplus_{d\geq 0} \irrep{d\omega},
\end{equation}
where the representation $\irrep{\omega}=\Lambda^k(\k^n)$ generates $\GCA{k}{n}$ as an algebra 
and $\irrep{d\omega}$ is the irreducible summand of $\Sym^d \irrep{\omega}$ of highest weight $d\omega$.
Under the action of the diagonal torus of $\GL_n(\k)$, an eigenbasis of $\Lambda^k(\k^n)$ 
consists of Pl\"ucker coordinates $\Pluck{I}$, for each $k$-subset $I$ of $\intvl{1}{n}$.
Thus $\GCA{k}{n}$ is identified with a quotient of the algebra of polynomials in the $\Pluck{I}$
by an ideal generated by certain well-known quadratic relations, the Pl\"ucker relations
(see e.g. \cite{HP}).
The shortest such Pl\"ucker relations may be written in the form
\begin{equation}\label{eq:pluckrel}
   \Pluck{Jac}\Pluck{Jbd} = \Pluck{Jab}\Pluck{Jcd} + \Pluck{Jad}\Pluck{Jbc},
\end{equation}
where $J$ is any $(k-2)$-subset of $\intvl{1}{n}$ disjoint from $\{a,b,c,d\}$
and $Jxy$ denotes $J\cup\{x,y\}$.

As a notable initial example in their newly developed theory of cluster algebras,
Fomin-Zelevinsky \cite[\S 12.2]{FZ} showed that these short Pl\"ucker relations
may be considered as the exchange relations for a cluster algebra structure on $\GCA{2}{n}$,
for which the cluster variables are precisely the Pl\"ucker coordinates
and the clusters are in bijection with the triangulations of an $n$-gon.
Scott~\cite{Sc06} then showed that this cluster algebra structure can be generalised to $\GCA{k}{n}$,
but with the addition of cluster variables of higher degree (and more exchange relations).
In all cases, these cluster algebras have $n$ `frozen' variables
$\Pluck{\cycint{j}}$, for $j=1,\ldots,n$,
where 
\begin{equation}\label{eq:[j]}
\cycint{j}=\intvl{j+1}{j+k}
\end{equation}
is a cyclic interval, i.e. addition is mod $n$.

Amongst the Grassmannian cluster algebras $\GCA{k}{n}$, the ones with finite cluster type,
i.e. with finitely many cluster variables, 
are those with $k=2$ or $n-2$, of cluster type $A_{n-3}$, and
those with $k=3$ or $n-3$ and $n=6,7,8$, 
of cluster types $D_4$, $E_6$, $E_8$, respectively.

This numerology has a parallel in the numerology of simple singularities:
the plane curve singularity $x^k=y^{n-k}$ is a simple singularity in precisely
these cases and its singularity type is the cluster type \New{(see \cite[Ch.10]{Yo})}. 
This paper will go some way to showing that this is not a coincidence.

Let $R$ be the (complete) coordinate ring of the singularity $x^k=y^{n-k}$ 
and $G\leq \SL_2(\k)$ be the cyclic group of order $n$ that acts naturally on it 
(see Section~\ref{sec:cat} for precise definitions).
The main aim of the paper is to show that the category $\CM_G(R)$
of $G$-equivariant (maximal) Cohen-Macaulay $R$-modules is an `additive categorification'
of the cluster algebra $\GCA{k}{n}$. 
In particular, the \New{reachable} rigid indecomposable modules in $\CM_G(R)$
correspond \New{one-to-one} to the cluster variables.
In fact, we will mostly work in the language of the non-commutative algebra 
 $A=R\tgr G$, the twisted group ring, and the category $\CM(A)$ of 
 Cohen-Macaulay $A$-modules, which coincides with $\CM_G(R)$.
 
The result is proved by relating it closely to the celebrated 
and more general result of Geiss-Leclerc-Schr\"oer \cite{GLS08},
which, in our context, proves the corresponding relationship between the cluster algebra structure on the coordinate ring
\begin{equation}\label{eq:kN}
\k[\N] = \GCA{k}{n} / (\Pluck{\cycint{n}} - 1)
\end{equation}
of the affine open cell in the Grassmannian and a subcategory 
$\Sub Q_k$ of the module category $\md \Pi(A_{n-1})$ of the preprojective algebra of type $A_{n-1}$
(see Remark~\ref{rem:subQk} for precise definitions).
Indeed, as \eqref{eq:kN} might suggest, our main result (Theorem~\ref{thm:main}) proves that
$\Sub Q_k$ is (equivalent to) a quotient of $\CM(A)$ by the 
projective-injective object $P_n$ corresponding to $\Pluck{\cycint{n}}$. 

As observed in \cite{GLS08}, for any $N$ in $\Sub Q_k$,
the cluster character $\cluschar{N}\in \k[\N]$ may be `homogenised'
to $\cluschartil{N}\in\GCA{k}{n}$ with degree equal to $\dim\soc(N)$,
the dimension of the socle of $N$.
The map $N\mapsto \cluschar{N} \mapsto \cluschartil{N}$ provides a one-to-one correspondence between
the \New{reachable} rigid indecomposables in $\Sub Q_k$, the cluster variables in $\k[\N]$ 
and the cluster variables except $\Pluck{\cycint{n}}$ in $\GCA{k}{n}$.
A key part of our result is that  $\dim\soc(N)$ is equal to the rank of the `minimal' lift of $N$ into $\CM(A)$.

After proving our main result, 
we make (in Section~\ref{sec:rkone}) a more careful study of rank one modules in $\CM(A)$,
showing that they correspond precisely to the $k$-subsets of $\intvl{1}{n}$,
i.e. to the Pl\"ucker coordinates. We also show (Corollary~\ref{cor:genfilt})
that every rigid indecomposable has a `generic' filtration by rank one modules,
which enables us to describe it in terms of its `profile', given by the $k$-subsets
that correspond to the rank one modules in this filtration.
In particular, this profile determines the class of the module in the Grothendieck
group of $\CM(A)$, which we describe in more detail in Section~\ref{sec:grotgrp}.
In Section~\ref{sec:examples}, we give the Auslander-Reiten quivers of $\CM(A)$ 
in the finite cluster-type cases $k=3$, $n=6,7,8$.

We conclude, in Section~\ref{sec:conc}, by showing how to define a 
homogeneous cluster character $\homcc{M}\in\GCA{k}{n}$ for $M$ in $\CM(A)$, 
so that $\deg\homcc{M}=\rk M$ and $\homcc{P_n}=\Pluck{\cycint{n}}$.
This induces a direct bijection between
the \New{reachable} rigid indecomposables in $\CM(A)$ and
the cluster variables in $\GCA{k}{n}$ (Theorem~\ref{thm:clusvar}).

\emph{Acknowledgements}: We would like to thank Jeanne Scott, Andrei Zelevinsky, Bernard Leclerc,
Jan Schr\"oer, Michael Wemyss, Osamu Iyama, Matthew Pressland, 
Karin Baur and Robert Marsh for helpful discussions and input
at various stages of this project.

%================================================================
\goodbreak\section{Motivating observations}
\label{sec:Jkn} % Sec 2
%================================================================

\New{We start by noting} that the cluster variables in $\GCA{k}{n}$ are not just homogeneous, 
but multihomogeneous (cf. \cite[p375]{Sc06}).
The argument is basically the same as for a single degree, in which case it is 
also familiar in string theory 
(e.g. \cite[\S6]{FHHU}), where it means that Seiberg duality preserves gauge
anomaly cancellation, that is, the balance equation \eqref{eq:bal} below.
See also \cite[Prop.~3.2]{Grab}.

\begin{lemma}\label{lem:balance}
Every cluster variable of $\GCA{k}{n}$ is multihomogeneous, that is, 
an eigenfunction for the action of the diagonal torus of $\GL_n(\k)$.
\end{lemma}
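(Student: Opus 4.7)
The plan is to prove the lemma by induction on the mutation distance from a suitable initial seed, for instance Scott's standard seed \cite{Sc06}, whose cluster variables are all Pl\"ucker coordinates. The base case is clear: each $\Pluck{I}$ is an eigenfunction for the diagonal torus with multidegree $\sum_{i\in I}e_i\in\ZZ^n$ (where $e_1,\ldots,e_n$ is the standard basis), and in particular has total degree one for the standard grading, so ordinary homogeneity will fall out of multi-homogeneity by summing the components of the weight.

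For the inductive step, suppose a cluster $x_1,\ldots,x_N$ consists of torus eigenfunctions with weights $w_1,\ldots,w_N\in\ZZ^n$ and has exchange matrix $B=(b_{ij})$. Mutation at $k$ produces $x_k'$ via the exchange relation
\[
x_k\,x_k'=\prod_{b_{ik}>0}x_i^{b_{ik}}+\prod_{b_{ik}<0}x_i^{-b_{ik}}.
\]
For $x_k'$ to remain an eigenfunction, the two monomials on the right must share a common weight, which is precisely the balance condition $\sum_i b_{ik}w_i=0$ foreshadowed in the remarks preceding the lemma. When this holds, $x_k'$ is automatically an eigenfunction of weight $w_k'=\sum_{b_{ik}>0}b_{ik}w_i-w_k$. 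The whole argument therefore reduces to two claims: (i) the balance condition holds for every column of the initial exchange matrix; and (ii) it is preserved under simultaneous mutation of $B$ and of the weight vector~$w$.

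For (i), each one-step exchange relation out of Scott's initial seed is a short Pl\"ucker relation \eqref{eq:pluckrel}, and the two monomials on its right-hand side both have multidegree $e_a+e_b+e_c+e_d+2\sum_{j\in J}e_j$, which indeed matches the left-hand side. For (ii), one substitutes the standard matrix-mutation formula $B\mapsto B'$ and the updated weight $w_k\mapsto w_k'$ directly into $\sum_i b'_{ij}w'_i=0$ for each $j$, and verifies, after the usual sign-splitting and cancellations, that this collapses back to the old balance conditions $\sum_i b_{ij}w_i=0$. I expect (ii) to be the only real obstacle: although it is conceptually the statement that the pair $(B,w)$ is preserved by mutation in much the same way that compatible pairs are preserved in Fomin-Zelevinsky's geometric-type theory, the bookkeeping of positive versus negative entries in the mutation formula has to be handled carefully. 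Once (i) and (ii) are in place, induction gives the multi-homogeneity, and hence homogeneity, of every cluster variable.
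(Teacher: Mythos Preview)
Your proposal is correct and follows essentially the same approach as the paper: induct from Scott's initial seed of Pl\"ucker coordinates, carry along the balance condition $\sum_i b_{ik}w_i=0$ as the inductive invariant, check it on the initial seed via the short Pl\"ucker relations, and verify that it is preserved under mutation. The paper's argument for preservation is phrased in quiver language (tracking how the contribution at each neighbouring vertex changes when arrows are reversed and new arrows are added), which is exactly the ``sign-splitting and cancellation'' bookkeeping you anticipate for claim (ii); you should simply carry that computation through rather than leaving it as an expectation.
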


\newcommand{\inout}{\Delta} %=== local def ===
\newcommand{\multi}{\mathbf{d}} %=== local def ===

\begin{proof}
The proof is inductive, starting from an initial seed of Pl\"ucker coordinates,
which are eigenfunctions by definition, and proceeding by mutation.
The mutation formula for a new variable $x'_m$ in terms of the seed 
$(Q,\underline{x})$ is
\[
   x'_m x_m = \prod_{i\to m} x_i + \prod_{j\from m} x_j
\]
and so, if the cluster variables $x_i$ are eigenfunctions of 
$\GL_n(\k)$-weight $\multi(x_i)\in \ZZ^n$, then 
$x'_m$ will be an eigenfunction of weight $\multi(x'_m)= \inout_m - \multi(x_m)$ if and only if
\begin{equation}\label{eq:bal}
 \sum_{i\to m} \multi(x_i) =  \sum_{j\from m} \multi(x_j),
\end{equation}
with common value $\inout_m$.
Hence, for the induction to proceed, we must also check that the balance equation 
\eqref{eq:bal} holds for some initial seed and is preserved by mutation.

For the initial seed from \cite[\S4]{Sc06}, the equation holds because
every mutable vertex has precisely two incoming and two outgoing arrows;
indeed, every initial exchange relation is a short Pl\"ucker relation \eqref{eq:pluckrel}.
The balance equation \eqref{eq:bal} is unchanged by mutation at any vertex 
not connected to $m$ and also at $m$ itself, since then all arrows simply reverse direction.
For mutation at a vertex $k$ with $k\to m$,
the term $\multi(x_k)$ is effectively replaced by
$-\multi(x'_k) + \sum_{j\to k} \multi(x_j)$, 
which is equal to $\multi(x_k)$.
Almost the same argument applies for mutation at $k\from m$. 
\end{proof}

\New{In the light of this fact, it is natural to look more closely at the $\GL_n(\k)$-weights of the cluster variables,
especially in the finite cluster-type cases.}

First notice that, since $V_{d\omega}$ is a summand of $\Sym^d \Lambda^k(\k^n)$, all of the $\GL_n(\k)$-weights
of $\GCA{k}{n}$ are in the sublattice
\[
 \ZZ^n(k) = \{ x\in \ZZ^n : \text{$k$ divides $\textstyle\sum_i x_i$} \}.
\]
This lattice may be graded by the function $\gr\colon \ZZ^n(k)\to\ZZ$, given by
\[
  \gr(x)= \frac{1}{k} \textstyle\sum_i x_i,
\]
so that all weights $\lambda$ of $V_{d\omega}$ have $\gr(\lambda)=d$.

Inside $\ZZ^n(k)$ we may identify two important sets of vectors
defined in terms of the elementary basis vectors $e_1,\dots,e_n$ of $\ZZ^n$,
i.e. the weights for the fundamental representation $\k^n$ of $\GL_n(\k)$.
\begin{align}
\label{eq:alpha} 
  \alpha_j &= e_{j+1}-e_j\, ,\quad\text{for $j=1,\ldots n-1$,}  \\
\label{eq:beta}  
  \beta_I &= \textstyle\sum_{i\in I} e_i\, , \quad\text{for any $k$-subset $I\sub\intvl{1}{n}$.}
\end{align}
The $\alpha_j$ are the negative simple roots of $\GL_n(\k)$,
while the $\beta_I$ are the weights of the representation $V_\omega=\Lambda^k(\k^n)$.
In particular, $\beta_{\cycint{n}}=e_1+\cdots+e_k$ is the highest weight.

Now, it is also possible to equip $\ZZ^n(k)$ with a quadratic form, given by
\[
  q(x)= \textstyle\sum_i x_i^2 + (2-k) \gr(x)^2,
\]
which is characterised by the fact that $q=2$ on all the roots of $\GL_n(\k)$ and all the $\beta_I$.
In the basis, $\alpha_1,\ldots,\alpha_{n-1},\beta_{\cycint{n}}$,
the quadratic form~$q$ is given by the Cartan matrix
of the graph $J_{k,n}$, obtained by attaching an extra node (for $\beta_{\cycint{n}}$)
to the $k$th node of an 
$A_{n-1}$ graph, i.e. the Dynkin diagram of $\GL_n(\k)$, as follows.
\begin{equation}\label{eq:Jkn}
% === BEGIN PIC ===
\begin{tikzpicture}[scale=0.7,baseline=(bb.base)]
\path (0,-0.5) node (bb) {};
\draw[dashed] (0,0)--(5,0);
\draw (2,0)--(2,-1);
\draw (0,0) node {$\bullet$} node[above=3pt] {$1$};
\draw (2,0) node {$\bullet$} node[above=3pt] {$k$};
\draw (5,0) node {$\bullet$} node[above=3pt] {$n-1$};
\draw (2,-1) node {$\bullet$} node[left=3pt] {$n$};
\end{tikzpicture}
% === END PIC ===
\end{equation}
Thus $\ZZ^n(k)$ can be identified with the root lattice $\rootlat(J_{k,n})$
of the associated Kac-Moody algebra.
The grading/degree function $\gr$ is given by the coefficient at the $n$th node,
since $\gr(\alpha_j)=0$ and $\gr(\beta_{\cycint{n}})=1$.

The first thing that indicates the potential significance of this identification is that
$\GCA{k}{n}$ has finite cluster type precisely when $J_{k,n}$ is a Dynkin diagram,
i.e. the corresponding root system is finite.
In addition, $J_{k,n}$ is an affine diagram in precisely the two tame cases 
$\GCA39\isom \GCA69$ and $\GCA48$.

The identification was set up so that the roots of degree 1, i.e. the $\beta_I$,
are precisely the weights of the cluster variables of degree 1, i.e. the
Pl\"ucker coordinates $\Pluck{I}$.
In finite cluster type, we can calculate all the roots of higher degree.
For $k=1,2$ there are no roots, and no cluster variables, of degree $d\geq 2$.
Note that the case $k=1$ is of trivial cluster type, with all $n$ coordinates frozen. 
For $k=3$, the roots of degree 2 are the weights of $\Lambda^6(\CC^n)$,
while for $(k,n)=(3,8)$, the roots of degree 3 are the weights of $\CC^8\otimes \Lambda^8(\CC^8)$.
There are no roots of degree $d\geq 4$.
Comparing this with \cite[Thm 6,7,8]{Sc06}, we have the following.

\begin{observation}\label{obs:clusvar}
In finite cluster type, the weights in $\ZZ^n(k)\isom \rootlat(J_{k,n})$ 
of the cluster variables of $\GCA{k}{n}$ of degree $d$ 
are precisely the roots of degree $d$, for $d>0$.
Furthermore, each root of degree $d$ occurs with multiplicity $d$.
\end{observation}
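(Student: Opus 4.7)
The plan is to prove Observation~\ref{obs:clusvar} by matching, case by case in the finite cluster-type list, the explicit enumeration of cluster variables with the weights of the graded pieces $\lieg_d$ of $\lieg(J_{k,n})$.

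The first step is to record the weights of $\lieg_d$ concretely inside $\ZZ^n(k)$. The degree-one roots are the weights of $\lieg_1\isom\Lambda^k(\CC^n)$, namely the $\binom{n}{k}$ vectors $\beta_I$, one for each $k$-subset $I\sub\intvl{1}{n}$. In the $k=3$ cases, \eqref{eq:g2} identifies the degree-two roots with the weights of $\Lambda^6(\CC^n)$, i.e.\ with the $\beta_J$ for $6$-subsets $J\sub\intvl{1}{n}$. In the $k=3$, $n=8$ case, \eqref{eq:g3} gives the degree-three roots as the weights of $\CC^8\otimes\Lambda^8(\CC^8)$, i.e.\ the vectors $2e_i+\sum_{j\neq i}e_j$ for $i=1,\ldots,8$. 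In each case the total count matches $N_d$ as in the table.

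The second step is the comparison itself. For $k=2$ the cluster variables are exactly the $\binom{n}{2}$ Pl\"ucker coordinates and each occurs with multiplicity $1=d$, matching the degree-one roots of $D_n$. For the $k=3$, $n=6,7,8$ cases, Scott~\cite[Thm 6,7,8]{Sc06} explicitly lists the cluster variables of each degree together with their multidegrees; Lemma~\ref{lem:balance} ensures these multidegrees are well defined, so the matching is weight-by-weight. In each case the degree-$d$ cluster variables must be partitioned into $N_d$ groups of $d$, each group sharing a single weight equal to one of the degree-$d$ roots computed above.

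The main obstacle is the $E_8$ bookkeeping at degree~$3$. There one must exhibit Scott's $24$ cluster variables as $8$ triples, each triple carrying the weight $2e_i+\sum_{j\neq i}e_j$ for a fixed $i$; the distinguished index $i$ can be read off the leading Pl\"ucker monomial in the cluster variable, which reduces the check to inspection. The degree-two pieces of the $E_6$, $E_7$, $E_8$ cases reduce to the analogous bookkeeping with $\beta_J$ (a $6$-subset weight) in place of $2e_i+\sum_{j\neq i}e_j$. Apart from this combinatorial verification, the statement is an immediate consequence of Observation~\ref{obs:fintype} together with the explicit $gl_n$-decompositions of $\lieg_d$ already recorded in \eqref{eq:g2} and \eqref{eq:g3}.
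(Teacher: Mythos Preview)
Your proposal is correct and follows essentially the same approach as the paper: the paper justifies this Observation simply by the sentence ``By comparing \cite[Thm 6,7,8]{Sc06} with \eqref{eq:g2} and \eqref{eq:g3}, we obtain the promised refinement of Observation~\ref{obs:fintype},'' and you have spelled out precisely that comparison, including the $k=2$ case and the explicit identification of the degree-$d$ roots as weights of $\Lambda^6(\CC^n)$ and $\CC^8\otimes\Lambda^8(\CC^8)$. Your added detail on the $E_8$ degree-three bookkeeping is exactly the kind of verification the paper leaves implicit.
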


As partial confirmation, we record the numbers of roots of each degree for $J_{k,n}$ 
and the numbers of cluster variables of $\GCA{k}{n}$ in the following table.
Remember that, in all cases, there are $n$ frozen cluster variables.
\[
\begin{tabular}[c]{l|c|ccc|c|c}
 $k,n$ & $J_{k,n}$ & $d=1$ & $d=2$ & $d=3$  & clu.type & \# clu.var.\\ \hline
$1,n$ & $A_n$ & $n$ & 0 & 0 & $\emptyset$ & $n$ \\
$2,n$ & $D_n$ & $\sfrac{n(n-1)}{2}$ & 0 & 0 & $A_{n-3}$ & $\sfrac{n(n-1)}{2}$\\
$3,6$ & $E_6$ &20 & 1 & 0 & $D_4$ & 22\\ 
$3,7$ & $E_7$ &35 & 7 & 0 & $E_6$ & 49\\ 
$3,8$ & $E_8$ &56 & 28 & 8 & $E_8$ & 136\\ 
\end{tabular}
\]

In these finite type cases, it is possible to calculate all the indecomposables $N$ in $\Sub Q_k$,
which are all rigid. 
Their homogenised cluster characters $\cluschartil{N}$,
as in \cite[\S10]{GLS08},
give all the cluster variables of $\GCA{k}{n}$
except $\Pluck{\cycint{n}}$.
We then find a parallel observation by assigning to each module $N$ in $\Sub Q_k$
an `enhanced' dimension vector, which is a function of the nodes of $J_{k,n}$,
given by the ordinary dimension vector on the $A_{n-1}$ part,
together with $\dim\soc(N)=\deg\cluschartil{N}$ on node~$n$. 

\begin{observation}\label{obs:subQk}
In finite type, 
the enhanced dimension vectors of rigid indecomposable modules
in $\Sub Q_k$ are precisely the roots of $J_{k,n}$ of positive degree, 
with the single exception of the simple root at node $n$. 
Furthermore each root of degree $d$ occurs with multiplicity $d$.
\end{observation}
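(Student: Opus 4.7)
The plan is to reduce Observation~\ref{obs:subQk} to Observation~\ref{obs:clusvar} by transporting along the bijection
$N \mapsto \cluschar{N} \mapsto \cluschartil{N}$
between rigid indecomposables in $\Sub Q_k$ and cluster variables of $\GCA{k}{n}$ other than $\Pluck{\cycint{n}}$, which was already noted in the Introduction as a consequence of \cite{GLS08}. The crux is to identify the enhanced dimension vector of $N$, as an element of $\rootlat(J_{k,n})$, with the $\GL_n(\k)$-weight of $\cluschartil{N}$ in the isomorphism $\ZZ^n(k) \isom \rootlat(J_{k,n})$ set up in Lemma~\ref{lem:balance} and the surrounding discussion.

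First I would check that $\cluschar{N}\in\k[\N]$ is an eigenfunction for the subtorus of the diagonal torus of $\GL_n(\k)$ preserving $\N$, with weight
$\sum_{i=1}^{n-1}(\dim N_i)\,\alpha_i$
in the basis $\alpha_j=e_{j+1}-e_j$. This is a standard feature of the GLS cluster character: the formula is a weighted sum over quiver Grassmannians of subrepresentations, and those with fixed dimension vector $\mathbf{e}$ contribute a single torus weight determined by the $\alpha_j$-component of $\mathbf{e}$; summed with its complement one obtains the claimed weight of $\cluschar{N}$.

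Next, by construction of the homogenisation $\cluschartil{N}\in \GCA{k}{n}$, it is a multihomogeneous lift of $\cluschar{N}$ of total degree $\dim\soc(N)$. Since the quotient map in \eqref{eq:kN} kills precisely the weight contribution of $\Pluck{\cycint{n}}$, which is $\beta_{\cycint{n}}$ of degree~$1$, and since $\gr(\alpha_j)=0$ and $\gr(\beta_{\cycint{n}})=1$, the weight of $\cluschartil{N}$ in $\ZZ^n(k)$ must be
\[
 \multi(\cluschartil{N}) \;=\; \sum_{i=1}^{n-1}(\dim N_i)\,\alpha_i + \dim\soc(N)\cdot\beta_{\cycint{n}}.
\]
Read in the basis $\alpha_1,\ldots,\alpha_{n-1},\beta_{\cycint{n}}$ of $\rootlat(J_{k,n})$, this is literally the enhanced dimension vector of $N$. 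Combining with Observation~\ref{obs:clusvar}, which identifies the multiset of weights of cluster variables of degree $d$ with the degree-$d$ roots of $J_{k,n}$ taken with multiplicity $d$, gives the desired statement, once we remove the single excluded cluster variable $\Pluck{\cycint{n}}$, whose weight $\beta_{\cycint{n}}$ is exactly the simple root at $v_n$.

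The one delicate point is the computation of $\multi(\cluschartil{N})$: it requires pinning down conventions well enough to see that the homogenisation procedure of \cite[\S10]{GLS08} shifts the weight by precisely $\dim\soc(N)\cdot\beta_{\cycint{n}}$ rather than by some other lift of the same class modulo $\beta_{\cycint{n}}$. This, however, is forced by multihomogeneity together with $\deg\cluschartil{N}=\dim\soc(N)$ and the fact that $\gr$ coincides with the $\beta_{\cycint{n}}$-coordinate; after that the observation is essentially a translation of Observation~\ref{obs:clusvar} through the bijection.
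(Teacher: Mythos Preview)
Your argument is sound, but it is worth noting that the paper does not actually \emph{prove} Observation~\ref{obs:subQk}: it is stated as a motivating empirical fact in the finite-type cases, verified by direct inspection and illustrated in Figures~\ref{fig:subQ2} and~\ref{fig:edv}. So there is no proof in the paper to compare against in the strict sense.

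That said, your reduction to Observation~\ref{obs:clusvar} via the identification of the enhanced dimension vector with the $\GL_n(\k)$-weight of $\cluschartil{N}$ is exactly the mechanism the paper establishes later, in the proof of Lemma~\ref{lem:GLweight}, where it cites \cite[Lemma~5.4]{GLS07a} for the weight formula $d\omega + [N]$ for $\homog{\cluschar{N}}{d}$. In effect you have anticipated that lemma and used it to show that Observations~\ref{obs:clusvar} and~\ref{obs:subQk} are equivalent. This is a cleaner and more conceptual justification than the paper offers at this point in Section~\ref{sec:Jkn}, though of course it still rests on Observation~\ref{obs:clusvar}, which is itself only checked case-by-case against \cite[Thm~6,7,8]{Sc06}. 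One small comment: your ``delicate point'' about the homogenisation shifting the weight by exactly $\dim\soc(N)\cdot\beta_{\cycint{n}}$ is not really delicate once you invoke \cite[Lemma~5.4]{GLS07a} directly, as the paper does; you could simply cite that rather than arguing indirectly from $\gr$.
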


\New{We will find later, in Remark~\ref{rem:cycling}, an empirical explanation for the multiplicities,
but we currently have no explanation for why the roots appear.}

\begin{remark}\label{rem:young} 
\New{
In the case $d=1$, we may be more explicit 
and also see that Observations~\ref{obs:clusvar} and \ref{obs:subQk} are compatible.
For each Pl\"ucker label $I$, there is a (rigid indecomposable) submodule $N_I$ of $Q_k$ whose dimension vector is
$\beta_I-\beta_{\cycint{n}}$ expressed in the basis $\{\alpha_j\}$ 
and thus, except in the case $N_{\cycint{n}}=0$, 
its enhanced dimension vector is $\beta_I$.
This is a categorification of the familiar bijection between the Young diagrams contained
 in a $k\times (n-k)$ box (corresponding to $Q_k$) and the weights of $\Lambda^k(\k^n)$.
It is a basic example of Geiss-Leclerc-Schr\"oer's general programme and,
in particular, it follows from \cite[\S6.2]{GLS08} that $\cluschartil{N_I}=\Pluck{I}$.
}
\end{remark}

We illustrate Observation~\ref{obs:subQk} and Remark~\ref{rem:young} 
in Figures~\ref{fig:subQ2} and~\ref{fig:edv}.
The first figure shows the Auslander-Reiten quiver of
$\Sub Q_2$ for $\Pi(A_{4})$, depicting the modules by their composition factors.
The top right module is $Q_2$ itself and, in this case, all other \New{indecomposable} modules in $\Sub Q_2$ are
submodules of $Q_2$. 
The projective-injective modules in $\Sub Q_2$ are those along the top and bottom edges.
The other five modules begin and end Auslander-Reiten sequences corresponding
to the five `meshes', one with only one term in the middle.
Note that the picture is more naturally drawn on a M\"obius strip,
with the corresponding modules on the left and right hand diagonal edges identified.

In the second figure, showing the enhanced dimension vectors, we have added the missing simple root
in the obvious gap. 
This completes the fifth mesh in a way that ensures that the enhanced dimension vectors are still additive on Auslander-Reiten sequences.
This strongly suggests that Figure~\ref{fig:edv} should be a picture of the 
Auslander-Reiten quiver of a category that is an `enhancement' of the category $\Sub Q_2$,
which contains one extra indecomposable object and whose Grothendieck group is identified
with $\rootlat(J_{k,n})$.

The main goal of this paper is to show that such a category does indeed exist.

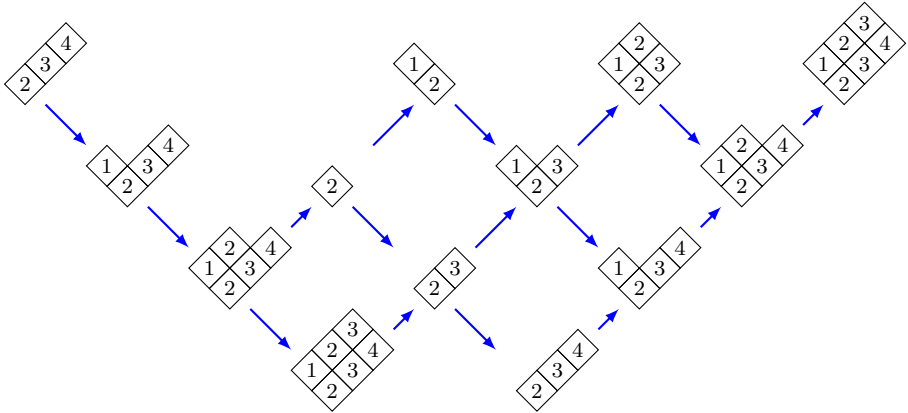
\begin{figure}\centering 
\begin{tikzpicture}[scale=0.27,baseline=(v13.base)]
\newcommand{\projc}{black}
\newcommand{\grow}{5}
\foreach \lab\x/\y/\a/\b/\c/\d/\col in 
 {52/-1/2/1/1/1/1/black, 51/-2/3/0/1/1/1/\projc, 
  53/0/1/1/2/1/1/black, 54/1/0/1/2/2/1/\projc, 13/1/2/0/1/0/0/black, 14/2/1/0/1/1/0/black, 
  15/3/0/0/1/1/1/\projc, 23/2/3/1/1/0/0/\projc, 24/3/2/1/1/1/0/black, 25/4/1/1/1/1/1/black,
  34/4/3/1/2/1/0/\projc, 35/5/2/1/2/1/1/black,  45/6/3/1/2/2/1/\projc}
{ \path (\grow*\x,\grow*\y) node (v\lab) {};
  \foreach \j in {1,...,\b} {\draw[\col] (\grow*\x,\grow*\y-2+2*\j) node (x) {2};
   \draw[\col] (x)++(0,1)--++(1,-1)--++(-1,-1)--++(-1,1)--cycle;}
 \ifthenelse{\a>0}
   {\foreach \j in {1,...,\a} {\draw[\col] (\grow*\x-1,\grow*\y-1+2*\j) node (x) {1};
   \draw[\col] (x)++(0,1)--++(1,-1)--++(-1,-1)--++(-1,1)--cycle; }}
   {}  
  \ifthenelse{\c>0}
   {\foreach \j in {1,...,\c} {\draw[\col] (\grow*\x+1,\grow*\y-1+2*\j) node (x) {3};
   \draw[\col] (x)++(0,1)--++(1,-1)--++(-1,-1)--++(-1,1)--cycle; }}
   {}
 \ifthenelse{\d>0}
   {\foreach \j in {1,...,\d} {\draw[\col] (\grow*\x+2,\grow*\y+2*\j) node (x) {4};
   \draw[\col] (x)++(0,1)--++(1,-1)--++(-1,-1)--++(-1,1)--cycle; }}
   {}
}
\foreach \tx/\ty/\hx/\hy/\ta\ha in
 {-2/3/-1/2/1/2, -1/2/0/1/1/2, 
  0/1/1/0/1/2, 1/2/2/1/1/2, 2/3/3/2/1/2, 
  2/1/3/0/1/2, 3/2/4/1/1/2, 4/3/5/2/1/2 }
 {\path[->,>=latex] (\grow*\tx+\ta,\grow*\ty-\ta)
   edge[blue,thick] (\grow*\hx-\ha,\grow*\hy+\ha);} 
\foreach \tx/\ty/\hx/\hy/\ta\ha in
 {0/1/1/2/3/1, 1/2/2/3/2/1, 1/0/2/1/3/1, 2/1/3/2/2/1,
 3/2/4/3/2/1, 3/0/4/1/3/1, 4/1/5/2/3/1, 5/2/6/3/3/1}
 {\path[->,>=latex] (\grow*\tx+\ta,\grow*\ty+\ta)
   edge[blue,thick] (\grow*\hx-\ha,\grow*\hy-\ha);} 
\end{tikzpicture}
\caption{The sub-category $\Sub Q_2$ of $\md\Pi(A_{4})$.}
\label{fig:subQ2}
\end{figure}

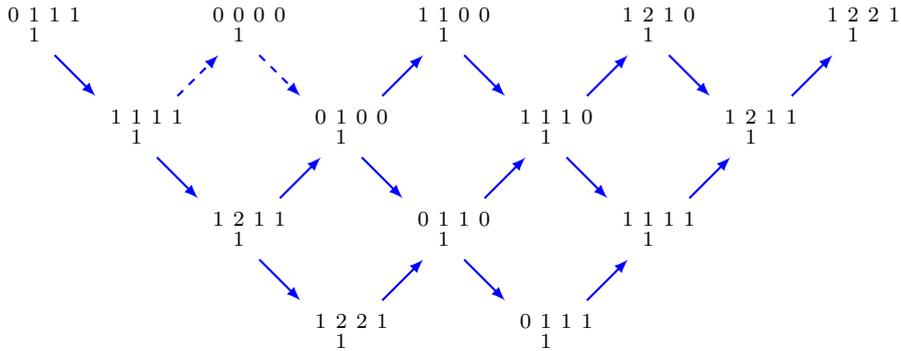
\begin{figure}\centering 
\begin{tikzpicture}[scale=0.27,baseline=(v13.base)]
\newcommand{\projc}{magenta}
\newcommand{\grow}{5}

% dimension vectors
\foreach \lab\x/\y/\a/\b/\c/\d/\e in 
 {52/-1/2/1/1/1/1/1, 51/-2/3/0/1/1/1/1,
  53/0/1/1/2/1/1/1, 54/1/0/1/2/2/1/1, 13/1/2/0/1/0/0/1, 14/2/1/0/1/1/0/1, 
  15/3/0/0/1/1/1/1, 23/2/3/1/1/0/0/1, 24/3/2/1/1/1/0/1, 25/4/1/1/1/1/1/1,
  34/4/3/1/2/1/0/1, 35/5/2/1/2/1/1/1,  45/6/3/1/2/2/1/1, 12/0/3/0/0/0/0/1}
{ \draw (\grow*\x,\grow*\y) node (v\lab) {\e};
  \draw (v\lab)++(-1,1) node {\a};
  \draw (v\lab)++(0,1) node {\b};
  \draw (v\lab)++(1,1) node {\c};
  \draw (v\lab)++(2,1) node {\d}; }
  
% solid arrows : SE, NE
\foreach \tx/\ty/\hx/\hy in
 {-2/3/-1/2, -1/2/0/1, 0/1/1/0, 1/2/2/1,
  2/3/3/2, 2/1/3/0, 3/2/4/1, 4/3/5/2}
 {\path[->,>=latex] (\grow*\tx+1,\grow*\ty-1)
   edge[blue,thick] (\grow*\hx-2,\grow*\hy+2);} 
\foreach \tx/\ty/\hx/\hy in
 {0/1/1/2, 1/2/2/3, 1/0/2/1, 2/1/3/2,
 3/2/4/3, 3/0/4/1, 4/1/5/2, 5/2/6/3}
 {\path[->,>=latex] (\grow*\tx+2,\grow*\ty+2)
   edge[blue,thick] (\grow*\hx-1,\grow*\hy-1);} 
   
% dashed arrows : SE, NE
 \foreach \tx/\ty/\hx/\hy in {0/3/1/2}
 {\path[->,>=latex] (\grow*\tx+1,\grow*\ty-1)
   edge[blue,thick,dashed] (\grow*\hx-2,\grow*\hy+2);} 
\foreach \tx/\ty/\hx/\hy in {-1/2/0/3}
 {\path[->,>=latex] (\grow*\tx+2,\grow*\ty+2)
   edge[blue,thick,dashed] (\grow*\hx-1,\grow*\hy-1);} 
\end{tikzpicture}
\caption{Enhanced dimension vectors for $\Sub Q_2$.}
\label{fig:edv}
\end{figure}

%================================================================
\goodbreak\section{Describing the category}
\label{sec:cat} % Sec 3
%================================================================

Let $G=\{\zeta\in \k^* : \zeta^n=1\}$ act on $\k^2$, via 
\begin{equation}\label{eq:Gaction}
  (x,y)\mapsto (\zeta x, \zeta^{-1} y)
\end{equation}
so that $G$ is identified with a finite subgroup of $\SL_2(\k)$.
Then $t=xy$ is a $G$-invariant function and $f=x^k-y^{n-k}$ 
is semi-invariant; geometrically, the $G$ action restricts to the singular curve in $\k^2$ 
with equation $x^k=y^{n-k}$.
Let 
\begin{equation}\label{eq:Rdef} 
  \Rb=\k[x,y]/(f), \quad \Rh=\k[[x,y]]/(f).
\end{equation}
Then $G$ also acts on $\Rb$ and $\Rh$ via \eqref{eq:Gaction}
and we may check that their invariant subrings are
\begin{equation}\label{eq:Zdef}
  \Zb=\k[t]= \Rb^G, \quad \Zh=\k[[t]]= \Rh^G.
\end{equation}
To see this, note that $\k[x,y]^G=\k[x^n,y^n,xy]$, but adding the relation 
$x^k=y^{n-k}$ means that $y^n=(xy)^k$ and $x^n=(xy)^{n-k}$ and so 
$\Rb^G$ may be identified with $\k[xy]$, which 
maps injectively to the quotient $\k[x,y]/(f)$.
In fact, every eigenspace (or isotypic summand) of $\Rb$ under the $G$ action
is a free $\Zb$-module of rank 1. The generators are $1,x,\ldots,x^k,y^{n-k-1}, \ldots, y$.
Hence $\Rb$ is a free $\Zb$-module of rank $n$.
A similar argument applies to $\Rh$.

\begin{remark}\label{rem:Rcomp}
Note that, while $\Zh$ is simply the completion of $\Zb$ with respect to the ideal $(t)\sub \Zb$,
we may regard $\Rh$ as the completion of $\Rb$ in several ways:
either as a $\Zb$-module, i.e. $\Rh = \Rb \otimes_{\Zb} \Zh$,
or with respect to the ideal $(t)\sub \Rb$,
or with respect to the ideal $(x,y)\sub \Rb$.
The last two completions coincide because $(x,y)^n \sub (t) \sub (x,y)$.
\end{remark}

The categories $\md_G(\Rb)$ and $\md_G(\Rh)$ of 
$G$-equivariant finitely generated $\Rb$-modules and $\Rh$-modules, 
respectively, are tautologically equivalent to the 
finitely generated module categories $\md(\Ab)$ and $\md(\Ah)$
for the twisted group rings
\begin{equation}\label{eq:Adef} 
  \Ab=\Rb\tgr G, \quad\Ah=\Rh\tgr G.
\end{equation}
Writing $Z,R,A$ for either $\Zb,\Rb,\Ab$ or $\Zh,\Rh,\Ah$, 
note that $Z$ is precisely the centre of $A$ (because $G$ acts faithfully on $\k^2$)
and that $A$ is a free $Z$-module of rank $n^2$.
We may alternatively describe $\md_G(R)$ as the category of $G\dual$-graded $R$-modules,
where $G\dual$ is the abelian group of linear characters of $G$, 
which is canonically identified with $\ZZ_n$, by setting $\chi_k(\zeta)=\zeta^k$.

Exploiting the identification $\md(A)=\md_G(R)$, we will define
\begin{equation}\label{eq:CM2}
  \CM(A) = \CM_G(R),
\end{equation}
that is, the category of $G$-equivariant (or $G\dual$-graded) Cohen-Macaulay $R$-modules.
As $R$ is a finitely generated $Z$-module, the Cohen-Macaulay $R$-modules
are precisely those which are Cohen-Macaulay over $Z$ 
(e.g. \cite[Ch.~IV, Prop.~11]{Ser1}).
Thus we could also have defined $\CM(A)$ to be the subcategory of $A$-modules
which are Cohen-Macaulay over $Z$, following Auslander \cite[Ch.I, \S7]{Aus78}.
As $Z$ is a PID, this is equivalent to being free over $Z$ and thus $\CM(A)$ 
is also the category of `$A$-lattices' (cf. \cite[Ch.~13]{Sim}).

A further key property of $A$ is that it is an order over $Z$. 
In other words, if $K$ is the field of fractions of $Z$, then
\begin{equation}\label{eq:order}
 A\otimes_{Z} K \isom M_n\bigl(K\bigr)
\end{equation}
More explicitly, $A$ may be canonically identified with a subalgebra of the matrix algebra $M_n(R)$ 
of the following form (illustrated in the case $n=5$, $k=3$),
where the $ij$ entry is the $G$-eigenspace of $R$ with eigenvalue $i-j\in\ZZ_n$.
\begin{equation}\label{eq:xyZmatrix}
\begin{pmatrix}
    Z &   yZ & x^3Z & x^2Z &   xZ \\ 
   xZ &    Z &   yZ & x^3Z & x^2Z \\
 x^2Z &   xZ &    Z &   yZ & x^3Z \\
 x^3Z & x^2Z &   xZ &    Z &   yZ \\
   yZ & x^3Z & x^2Z &   xZ &    Z 
\end{pmatrix}
\end{equation}
We may then think of $A$ as a formal matrix algebra over $Z$ of this form,
subject to the additional relations that $xy=t$ and $x^k=y^{n-k}$.
Alternatively, setting $x=t^{(n-k)/n}$, $y=t^{k/n}$, we can realise
$A$ as a tiled order of index $n$, in the sense of Simson~\cite[\S13]{Sim}.
See also Demonet-Luo~\cite{DL}, in the case $k=2$.

Taking the tiled order point-of-view, as in \eqref{eq:xyZmatrix}, the category $\CM(\Ah)$ 
may be analysed using a covering poset, as in \cite[Ch.~13]{Sim} (see also \cite{RW}).
In particular, the width of this poset is $w=\min(k,n-k)$ and one may thus
obtain the anticipated result that $\CM(\Ah)$ has finite type for $w=2$
and $w=3$, $n=6,7,8$, using Kleiner's criterion \cite[Thm~10.1]{Sim}.

\begin{remark}\label{rem:bartohat}
Much of what we do in this paper is insensitive as to whether we are in the complete case,
when $Z,R,A$ is $\Zh,\Rh,\Ah$, or the non-complete case,
when $Z,R,A$ is $\Zb,\Rb,\Ab$. Indeed, these two cases are closely related by the fact that
\begin{equation}\label{eq:bartohat}
 \Rh = \Rb\otimes_{\Zb} \Zh, \quad \Ah = \Ab\otimes_{\Zb} \Zh.
\end{equation}
Note: here ``$=$'' means that the natural map is an isomorphism and is stronger than ``$\isom$''.
Thus, although $\Rh$ and $\Ah$ are completions of $\Rb$ and $\Ab$
(see Remarks~\ref{rem:Rcomp} \& \ref{rem:Acomp}),
it is simpler to think of them as being related by base change.
For example, \eqref{eq:order} follows automatically for $\Ah$ given the result for $\Ab$.
In addition, there is a comparison functor
\begin{equation}\label{eq:comparison}
 c\colon \md(\Ab) \to \md(\Ah) \colon M\mapsto M^c=M\otimes_{\Zb} \Zh,
\end{equation}
which satisfies
\begin{align}
 \Hom_{\Ah}(M^c,N^c) &= \Hom_{\Ab}(M,N) \otimes_{\Zb} \Zh \\
 \Ext^i_{\Ah}(M^c,N^c) &= \Ext^i_{\Ab}(M,N) \otimes_{\Zb} \Zh
\end{align}
(cf. \cite[Thm~7.11, Exer.~7.7]{Mat}) and also
takes CM-modules to CM-modules.
\end{remark}

\begin{remark}\label{rem:incomplete}
\New{By contrast, there is one key respect in which
it does make a difference whether $A=\Ab$ or $\Ah$.}
Since $\Zh$ is a complete local ring, the Krull-Schmidt Theorem holds in $\md(\Ah)$, but
we do not know whether Krull-Schmidt holds in $\md(\Ab)$.
If it does, then our main result (Theorem~\ref{thm:main}) will also hold
for $\CM(\Ab)$, but then it will follow that $\CM(\Ab)$ and $\CM(\Ah)$
are essentially the same, i.e. the comparison functor $c$ induces a bijection on isomorphism classes.
It can not, of course, be an actual equivalence of categories.

In addition, $\Ah$ is an `isolated singularity' in the sense of
Auslander \cite{Aus86} and so $\CM(\Ah)$ has almost split sequences
and an Auslander-Reiten quiver.
The category $\CM(\Ab)$ might, in practice, be presented by the same quiver, 
but the arrows will not represent irreducible morphisms in the strict sense.
\end{remark}

Another way to describe the algebra $\Ab$,
is as a quotient of the path algebra $\k Q$ of a quiver $Q$, 
namely the McKay quiver of $G\leq\SL_2(\k)$.
In this case, it is the doubled quiver of a simple circular graph $C$
and so we could consider that $\k Q$ is the path algebra of this graph.
More precisely, let $C=(C_0,C_1)$ be the circular graph with vertex set 
$C_0=\ZZ_n=G\dual$ and edge set $C_1=\intvl{1}{n}$, with edge $i$
joining vertices $(i-1)$ and $(i)$. 
Paths on the graph can travel along the edges in either direction and so
we have an associated quiver $Q=Q(C)$ with vertex set $Q_0=C_0$ and
arrows set $Q_1=\{x_a,y_a: a\in C_1\}$ with $x_a\colon (i-1) \to (i)$
and $y_a\colon (i) \to (i-1)$, as illustrated in Figure~\ref{fig:mckayQ} in the case $n=5$.

\begin{figure}
\begin{tikzpicture}[scale=0.90,baseline=(bb.base)]  
\path (0,0) node (bb) {};
\newcommand{\circradius}{1.5cm}
\newcommand{\inradius}{1.2cm}
\newcommand{\outradius}{1.8cm}
\draw[blue,thick] (0,0) circle(\circradius);
\foreach \j in {1,...,5}
{\draw (90-72*\j:\circradius) node[black] {$\bullet$};
 \draw (90-72*\j:\outradius) node[black] {\j};
 \draw (126-72*\j:\inradius) node[black] {\j}; }
\end{tikzpicture}
\qquad\qquad
\begin{tikzpicture}[scale=0.90,baseline=(bb.base)]
\path (0,0) node (bb) {};
\newcommand{\radius}{1.5cm}
\foreach \j in {1,...,5}{
  \path (90-72*\j:\radius) node[black] (w\j) {$\bullet$};
  \path (162-72*\j:\radius) node[black] (v\j) {};
  \path[->,>=latex] (v\j) edge[blue,bend left=25,thick] node[black,auto] {$x_{\j}$} (w\j);
  \path[->,>=latex] (w\j) edge[blue,bend left=20,thick] node[black,auto] {$y_{\j}$}(v\j);
}
\draw (90:\radius) node[above=3pt] {5};
\draw (162:\radius) node[above left] {4};
\draw (234:\radius) node[below left] {3};
\draw (306:\radius) node[below right] {2};
\draw (18:\radius) node[above right] {1};
\end{tikzpicture}
\caption{The circular graph $C$ and double quiver $Q(C)$}
\label{fig:mckayQ}
\end{figure}
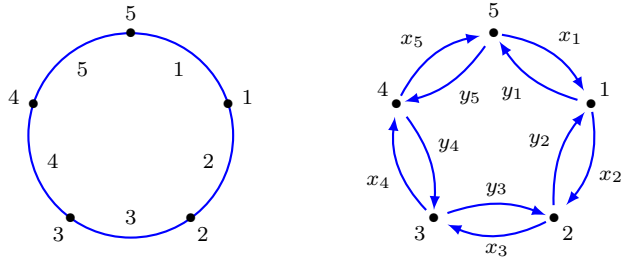

It is familiar from the McKay correspondence 
that $\k[x,y]\tgr G$ is isomorphic to the
preprojective algebra of type $\Atil_{n-1}$, 
that is, the quotient of the path algebra $\k Q$
by the $n$ relations $xy=yx$, one beginning at each vertex.
If we quotient further by the $n$ relations $x^k=y^{n-k}$,
then we obtain $\Rb\tgr G$, that is, $\Ab$.

\begin{remark}\label{rem:Acomp}
As in Remark~\ref{rem:Rcomp}, 
we may regard $\Ah$ as the completion of $\Ab$ in several ways:
either as a $\Zb$-module, or with respect to the ideal $(t)\sub \Ab$,
since $t$ is central,
or with respect to the arrow ideal $(x,y)\sub \Ab$,
since $(x,y)^n \sub (t) \sub (x,y)$.
Hence we may also present $\Ah$
as a quotient of the complete path algebra $\widehat{\k Q}$
by the ideal generated by the relations
$xy-yx$ and $x^k-y^{n-k}$.
\end{remark}

An immediate consequence of this description (for $A=\Ab$ or $\Ah$) is that,
for each vertex $j\in Q_0$, \New{there is an idempotent $e_j\in A$ and
a corresponding indecomposable projective (left) module $P_j=Ae_j$.
In the case $A=\Ah$, knowing Krull-Schimidt means that this is a complete list of (non-isomorphic) 
indecomposable projectives.}

Note that our convention is that representations of the quiver correspond to 
left $A$-modules. 
Right $A$-modules, i.e. left $A\op$-modules, are representations of the 
opposite quiver. 
\New{There are also indecomposable projective right modules $e_jA$.}

\begin{definition}
For any $A$-module $M$ we can define its \emph{rank}
\[
 \rk(M) = \len\bigl( M\otimes_Z K \bigr), 
\]
noting that $A\otimes_Z K\isom M_n\bigl( K\bigr)$, by \eqref{eq:order},
which is a simple algebra.
\end{definition}

Note that rank is additive on short exact sequences and
$\rk(M) = 0$ for any finite-dimensional $A$-module, 
because these are torsion over $Z$.
\New{
Furthermore, 
$\rk_Z(M) = \dim_K \bigl( M\otimes_Z K \bigr) = n\rk(M)$.
Indeed,  for every $1\leq j\leq n$, 
the fact that $xy=t$ implies that $x$ induces an isomorphism 
$e_jM\otimes_Z K\isom e_{j+1}M\otimes_Z K$ and hence
}
\[
  \rk_Z(e_j M) = \rk(M).
\]

For any $A$-module $M$, we have a dual module $M\dual=\Hom_Z(M,Z)$,
which is a right (resp. left) module, when $M$ is a left (resp. right) module.
If $M$ is Cohen-Macaulay, i.e. free over $Z$,
then so is $M\dual$ and $(M\dual)\dual\isom  M$.

\begin{lemma}\label{lem:selfdual}
As either a left or right $A$-module, $A\dual \isom A$.
More precisely,
 \begin{equation}\label{eq:duals}
  (Ae_i)\dual \isom \New{e_{i+k}} A \, ,
  \qquad
  (e_iA)\dual \isom A\New{e_{i-k}}\, .
\end{equation}
\end{lemma}
 
\begin{proof}
Observe first that $Ae_i$ is generated freely over $Z$ by 
\[
   e_i, xe_i, x^2e_i,\ldots, x^{k}e_i=y^{n-k}e_i,\ldots, ye_i
\]
Hence, $(Ae_i)\dual$ is generated over $Z$ by dual generators
\[
   (e_i)\dual, (xe_i)\dual, \ldots, (x^{k}e_i)\dual, \ldots, (ye_i)\dual
\]
satisfying $(xe_i)\dual x=(e_i)^\vee$, etc. 
Then we see that $(Ae_i)^\vee \isom \New{e_{i+k}}A$, 
by identifying $\New{e_{i+k}}$ with $(x^{k}e_i)\dual$.

Since the projective modules are free over $Z$, 
the second identity in \eqref{eq:duals} follows from the first, by taking duals of both sides.
Finally, since $A= \bigoplus_{j\in Q_0} Ae_j$ as a left module, 
we deduce that $A\isom A\dual$, and similarly for $A$ as a right module.
\end{proof}

For example, Figure~\ref{fig:projective+dual} shows $Ae_0$ and 
$(Ae_0)\dual \isom e_3A = \New{A\op e_3}$,
in the case $n=5$, \New{$k=3$}. 
In the figure, each dot in column $i$ represents a $\k$-basis vector in the $Z$-module $e_iAe_0$
(on the left) or $e_3Ae_i$ (on the right).
Columns 0 and 5 should be identified so that the white dots coincide.

\begin{figure}
% === BEGIN PIC ===
\begin{tikzpicture}[scale=0.7]
\foreach \j in {0,...,5}
  {\path (\j,4.0) node (a) {$\j$};}
\path (0,3) node (a0) {$\circ$}; \path (5,2) node (a5) {$\circ$}; 
\foreach \v/\x/\y in
  {a1/1/2, a2/2/1, a3/3/0, a4/4/1, b0/0/1, b1/1/0, b2/2/-1, b4/4/-1, b5/5/0, c0/0/-1}
  {\path (\x,\y) node (\v) {$\bullet$};}
\foreach \j in {1,3,5}
  {\path (\j,-1.2) node {$\vdots$};}
\foreach \t/\h in
  {a0/a1, a1/a2, a2/a3, b0/b1, b1/b2, a3/b4, a4/b5}
  {\path[->,>=latex] (\t) edge[blue,thick] node[black,above right=-2pt] {$x$} (\h);}
\foreach \t/\h in
  {a4/a3, a5/a4, b5/b4, a3/b2, a2/b1, a1/b0, b1/c0}
  {\path[->,>=latex] (\t) edge[blue,thick] node[black,above left =-2pt] {$y$} (\h);}
\end{tikzpicture}
% === END PIC ===
\qquad\qquad
% === BEGIN PIC ===
\begin{tikzpicture}[scale=0.7]
\foreach \j in {0,...,5}
  {\path (\j,6.8) node (a) {$\j$};}
\path (0,3) node (a0) {$\circ$}; \path (5,4) node (a5) {$\circ$};
\foreach \v/\x/\y in
  {a1/1/4, a2/2/5, a3/3/6, a4/4/5, b1/1/2, b2/2/3, b3/3/4, b4/4/3, b5/5/2, c3/3/2}
  {\path (\x,\y) node (\v) {$\bullet$};}
\foreach \j in {0,2,4}
  {\path (\j,1.6) node {$\vdots$};}
\foreach \t/\h in
  {a0/a1, a1/a2, a2/a3, b1/b2, b2/b3, b3/a4, b4/a5, c3/b4}
  {\path[->,>=latex] (\h) edge[blue,thick] node[black,above left =-2pt] {$x$} (\t);}
\foreach \t/\h in
  {a4/a3, a5/a4, b5/b4, b4/b3, b3/a2, b2/a1, b1/a0, c3/b2}
  {\path[->,>=latex] (\h) edge[blue,thick] node[black,above right=-2pt] {$y$} (\t);}
\end{tikzpicture}
% === END PIC ===
\caption{A projective module and its dual.}
\label{fig:projective+dual}
\end{figure}
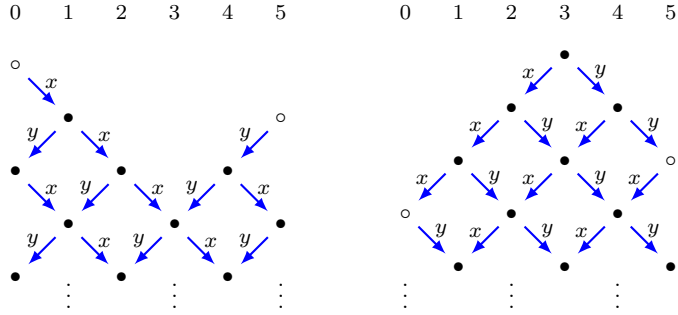

\begin{corollary}\label{cor:gorenstein}
The algebra $A$ is Gorenstein in the sense
of Buchweitz \cite{Bu}, i.e. it is (left and right) noetherian and of finite
 (left and right) injective dimension.
Furthermore
\begin{equation}\label{eq:CM3}
  \CM(A) = \{ M : \Ext_A^i(M,A)=0,\;\text{for $i>0$} \}.
\end{equation}
Hence $\CM(A)$ is a Frobenius category in which the projective-injective objects are the projective $A$-modules.
\end{corollary}

\begin{proof}
Note first that $A$ is noetherian 
because it is finitely generated as a module over a commutative noetherian ring.
Second, because $A$ is a free $Z$-module, the functor
 \[
   \Hom_Z(A,-)\colon \md Z \to \md A
 \]
is exact when interpreted
as a functor to either left or right $A$-modules.
This functor also preserves injectives, \New{as it is right adjoint to the (exact) forgetful functor,} 
and $Z$ is Gorenstein of injective dimension one,
so $A\isom \Hom_Z(A,Z)$ has (left and right) injective dimension one
and thus, in particular, is Gorenstein.

\New{
To deduce \eqref{eq:CM3}, it suffices to observe that, by Lemma~\ref{lem:selfdual},
\[
 \Ext^i_A(M,A) \isom \Ext^i_A(M,\Hom_Z(A,Z))
\]
and that the derived version of the adjunction between $\Hom_Z(A,-)$ and the forgetful functor $A\otimes_A -$
yields
\[
\Ext^i_A(M,\Hom_Z(A,Z)) \isom 
\Ext^i_Z(A\otimes_A M,Z) = 
\Ext^i_Z(M,Z),
\]
because $\Hom_Z(A,-)$ is exact.
Finally, as $Z$ is Gorenstein, $\Ext^i_Z(M,Z)=0$, for all $i>0$, precisely when $M$ is Cohen-Macaulay over $Z$, i.e. in $\CM(A)$.
}

Note that the right-hand-side of \eqref{eq:CM3} is also denoted $GP(A)$
and actually the definition of (maximal) Cohen-Macaulay $A$-modules used in \cite[\S4.2]{Bu}.
It necessarily yields a Frobenius category
\New{in which the projective objects are precisely the projective $A$-modules,
}
(see \cite[Remark~4.8]{Bu} and \cite[\S2.1]{KIWY}).
\New{In our case, the coincidence of projectives also follows directly from the fact that the first syzygy of any $A$-module is a submodule of a free $Z$-module, hence is a free $Z$-module, i.e. in $\CM(A)$ as originally defined.}
\end{proof}

%================================================================
\goodbreak\section{The Main Theorem}
\label{sec:mainthm} % Sec 4
%================================================================

We now proceed towards our main goal: to relate $\CM(A)$ to the category
$\Sub Q_k$ of Geiss-Leclerc-Schr\"oer \cite{GLS08}.
The main result (Theorem~\ref{thm:main}) and its consequences
only hold in the complete case $A=\Ah$, but the initial results
preceding it still hold in either case.

\begin{lemma}\label{lem:extAZ}
For any $N$ in $\md A$, we have a natural isomorphism
\[
  \Ext_A^1(N,P_n) \isom \Ext_Z^1(e_kN,Z)
\]
\end{lemma}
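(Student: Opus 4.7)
The plan is to construct a natural transformation $\eta\colon \Hom_A(-, P_n) \to \Hom_Z(e_k(-), Z)$, show it is an isomorphism on projectives, and then propagate this to $\Ext^1$ using a projective resolution of $N$ together with the exactness of $e_k(-) \isom \Hom_A(P_k, -)$.

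First I would define $\eta$ as follows. For $\phi\colon M \to P_n$ and $m \in e_k M$, note $\phi(m) \in e_k \cdot Ae_n = e_k A e_n$, which by \eqref{eq:xyZmatrix} is a free rank-one $Z$-module with generator $\xi = x^k e_n = y^{n-k} e_n$; put $\eta_M(\phi)(m)$ equal to the coefficient of $\xi$ in $\phi(m)$. Naturality in $M$ is immediate. For $M = Ae_j$, $\eta_{Ae_j}$ is identified with the map $e_j A e_n \to (e_k A e_j)\dual$ induced by the multiplication pairing $e_k A e_j \otimes_Z e_j A e_n \to e_k A e_n \isom Z$. Both left factors are free of rank one, and a short case check on \eqref{eq:xyZmatrix} shows that their $Z$-generators may be taken to be $x^{k-j}$ and $x^j$ when $j \leq k$ (with product $x^k = \xi$), or $y^{j-k}$ and $y^{n-j}$ when $j \geq k$ (with product $y^{n-k} = \xi$). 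So the pairing is perfect, $\eta_{Ae_j}$ is an iso, and by additivity $\eta_P$ is an iso for every projective $P$.

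Finally, take a projective resolution $\cdots \to P^1 \to P^0 \to N \to 0$ in $\md A$. Since $P_k$ is projective, $e_k(-)\isom\Hom_A(P_k,-)$ is exact, so applying it yields an exact sequence $\cdots \to e_k P^1 \to e_k P^0 \to e_k N \to 0$ of free $Z$-modules, i.e.\ a projective resolution of $e_k N$ over $Z$. The natural transformation $\eta$ then gives a term-wise iso between the computing complexes $\Hom_A(P^\bullet, P_n)$ and $\Hom_Z(e_k P^\bullet, Z)$, so taking $H^1$ delivers the claimed natural isomorphism $\Ext_A^1(N, P_n) \isom \Ext_Z^1(e_k N, Z)$. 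The main point needing care is the perfectness of the pairing above: a priori the product of the two generators could pick up a power of $t$ from the relation $xy = t$, but the choice of $P_n$ as target arranges matters so that for each $j$ the appropriate $x$- or $y$-half of the circular quiver connects $n$ through $j$ to $k$ without any redundant detour.
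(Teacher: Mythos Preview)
Your proof is correct and follows the same route as the paper, just unpacked: the paper writes $P_n \isom \Hom_Z(e_kA,Z)$ using Lemma~\ref{lem:selfdual} and then invokes the derived tensor--hom adjunction $\Ext_A^1\bigl(N,\Hom_Z(e_kA,Z)\bigr) \isom \Ext_Z^1\bigl(e_kA\otimes_A N,Z\bigr)$, noting that both $e_kA\otimes_A-$ and $\Hom_Z(e_kA,-)$ are exact. Your perfect-pairing check on each $Ae_j$ is exactly the componentwise verification of $P_n\isom(e_kA)^\vee$, and your propagation along a projective resolution is the standard proof of that adjunction in this setting.
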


\begin{proof}
Using \eqref{eq:duals} to write $P_n=Ae_n \isom \Hom_Z(e_kA,Z)$,
we have the derived adjunction
\begin{equation}\label{eq:der.adj}
\Ext_A^1(N,\Hom_Z(e_kA,Z)) \isom \Ext_Z^1(e_kA\otimes_AN,Z),
\end{equation}
noting that $e_kA\otimes_A-$ is exact and so is $\Hom_Z(e_kA,-)$, \New{since $e_kA$ is free over $Z$,}
and so they don't need to be derived themselves.
\end{proof}

As an immediate consequence, if we write $S_j$ for the simple $A$-module 
at vertex $j$, then we have
\begin{equation}\label{eq:extSP}
\dim\Ext_A^1(S_j,P_n)=
  \begin{cases}
    1 & \text{when $j=k$\New{,}}\\
    0 & \text{otherwise\New{.}}
  \end{cases}
\end{equation}

Let $(e_n)$ be the two-sided ideal generated by the idempotent $e_n$.
From the description of $A$ as a quiver algebra,
we see that the quotient algebra $A/(e_n)$ is isomorphic 
to the quotient $B$ of the preprojective algebra $\Pi=\Pi(A_{n-1})$,
by the additional relations $x^k=0=y^{n-k}$.
Thus we have a functor
\begin{equation}\label{eq:trun}
  \trun\colon \md A \to \md B \colon M \mapsto M/(e_n)M
\end{equation}

\begin{remark}\label{rem:subQk}
Recall that, to categorify the cluster algebra structure on the affine
open cell $\N$ of the Grassmannian $\grass{k}{n}$, Geiss-Leclerc-Schr\"oer
\cite{GLS08} studied the \New{Frobenius} subcategory $\Sub Q_k$ of $\md \Pi$
consisting of those modules with socle concentrated at the vertex $k$,
that is, modules isomorphic to a submodule of a direct sum of copies 
of the indecomposable injective $\Pi$-module $Q_k$ with socle $S_k$.
Note that $Q_k$ is actually a $B$-module
and so $\Sub Q_k$ is a subcategory of $\md B$.
Furthermore, \New{one can see by direct computation that}
the \New{indecomposable} projective (and hence also injective) objects in $\Sub Q_k$ are precisely
the \New{indecomposable} projective $B$-modules.

Since $\Sub Q_k$ is extension-closed in $\md \Pi$,
it is certainly extension-closed in $\md B$.
Thus the exact structure on $\Sub Q_k$ used in \cite{GLS08}
is actually inherited from $\md B$ and the syzygy functor 
$\Omega_k=\tau^{-1}$ \cite[Prop~3.4]{GLS08}
is the usual syzygy functor $\Omega$ on $\md B$.
\New{We believe} that $\Sub Q_k$ should coincide with 
$\CM(B)=\{M\in\md B : \Ext_B^i(M,B)=0, \; i>0\}$, 
but we will not need to know this here.
\end{remark}

\New{We henceforth identify $\md B$ with the subcategory of $\md A$ 
consisting of modules $M$ with $e_nM=0$. 
Thus we also consider $\Sub Q_k$ as a (full) subcategory of $\md A$.}

\begin{proposition}\label{prp:tosubQk}
If $M$ is in $\CM(A)$, then $(e_n)M\isom P_n\otimes V$,
where $V$ generates $e_nM$ (freely) over $Z$.
Thus we have a short exact sequence
\begin{equation}\label{eq:ses.trunM}
 0 \to P_n\otimes V \to M\to \trun M \to 0.
\end{equation}
Furthermore, $\trun M$ is in $\Sub Q_k$ and the
restricted functor $\trun\colon \CM(A) \to \Sub Q_k$ is exact
and maps projectives to projectives.
\end{proposition}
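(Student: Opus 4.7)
The plan is to first establish the structural isomorphism $(e_n)M \cong P_n\otimes_Z V$ with $V = e_nM$, from which the short exact sequence \eqref{eq:ses.trunM} is immediate, and then to deduce the remaining assertions.

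For the structural isomorphism, I would consider the natural evaluation map $\mu\colon Ae_n \otimes_Z e_nM \to M$ given by $a\otimes v \mapsto av$, whose image is visibly the submodule $(e_n)M$ generated by $e_nM$. Since $M\in\CM(A)$, the $Z$-module $V = e_nM$ is free, and so is $Ae_n\otimes_Z V$. Injectivity of $\mu$ is proved by tensoring with the field of fractions $K$ of $Z$: by \eqref{eq:order}, $A\otimes_Z K\cong M_n(K)$, and any module over this simple algebra decomposes canonically as $M_n(K)e_n\otimes_K e_n(-)$, so $\mu\otimes_Z K$ is an isomorphism. Since $Ae_n\otimes_Z V$ is $Z$-torsion-free, this forces $\ker\mu = 0$.

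Next, to show $\pi M\in\Sub Q_k$, I would take any simple submodule $S_j$ of $\pi M$, noting that $j\neq n$ because $e_n\pi M = 0$. Pulling back along $S_j\hookrightarrow \pi M$ produces a short exact sequence $0\to (e_n)M\to E\to S_j\to 0$ with $E\subseteq M$. Since $(e_n)M$ is a direct sum of copies of $P_n$, this extension is classified by $\Ext^1_A(S_j,P_n)$, which by Lemma~\ref{lem:extAZ} and \eqref{eq:extSP} vanishes whenever $j\neq k$. In that case the sequence splits and $S_j$ embeds into $M$, contradicting the $Z$-freeness of $M$. Hence $j=k$, so $\soc(\pi M)$ is concentrated at vertex $k$.

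For exactness of $\pi$ and preservation of projectives, view $\pi$ as the functor $B\otimes_A -$ with $B = A/(e_n)$. Applying the same tensoring-with-$K$ argument to $M=A$ gives $(e_n)\cong Ae_n\otimes_Z e_nA$ as a bimodule, which is a projective right (and left) $A$-module since $Ae_n$ and $e_nA$ are both $Z$-free. Thus $0\to (e_n)\to A\to B\to 0$ is a projective resolution, and $\operatorname{Tor}_1^A(B,N)$ is the kernel of the natural multiplication map $(e_n)\otimes_A N = Ae_n\otimes_Z e_nN\to N$, which is precisely the injection $\mu$ applied to $N$; the desired vanishing follows. Preservation of projectives is then immediate from $\pi(Ae_i)=Be_i$ for $i\neq n$ and $\pi(P_n)=0$, together with the identification in Remark~\ref{rem:subQk} of projective-injectives in $\Sub Q_k$ with $B$-projectives.

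The main obstacle is the first step, namely proving injectivity of $\mu$; this is where one must use in an essential way that $A$ is an order over $Z$, so that extending scalars to $K$ collapses $A$ to a simple matrix algebra. Everything afterwards is homological bookkeeping given the concrete description of $(e_n)M$ as a direct sum of copies of $P_n$.
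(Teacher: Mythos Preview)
Your proof is correct and follows the same overall outline as the paper, but the technical execution differs in two places worth noting. For the injectivity of $\mu$, the paper observes directly that $e_n\mu$ is an isomorphism, so $\ker\mu$ is a Cohen--Macaulay module vanishing at vertex~$n$ and hence zero (via the earlier fact that $\rk_Z(e_jM)=\rk M$ for all $j$); you instead invoke the order property \eqref{eq:order} and Morita theory to see that $\mu\otimes_Z K$ is an isomorphism, then use torsion-freeness of the domain. Both routes ultimately rest on the order structure, but the paper's avoids the explicit appeal to Morita equivalence. For exactness, the paper notes that $M\mapsto P_n\otimes_Z e_nM$ is already an exact functor which, on $\CM(A)$, coincides with $M\mapsto (e_n)M$, and then applies the Snake Lemma; your $\operatorname{Tor}$ computation is the derived-functor repackaging of the same fact, and your bimodule identification $(e_n)\cong Ae_n\otimes_Z e_nA$ (obtained by applying the injectivity argument to $M=A$) is a clean way to produce the required right-projective resolution. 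Your pullback-and-splitting argument for the socle condition is exactly the paper's long exact sequence $\Hom_A(S_j,M)\to\Hom_A(S_j,\pi M)\to\Ext^1_A(S_j,P_n)\otimes V$ made explicit.
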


\begin{proof}
In general, 
$(e_n)M$ is the image of the natural map 
\[ 
  \eta_M\colon P_n\otimes_Z e_nM \to M,
\]
noting that $Z=\End(P_n)$ and $e_nM=\Hom_A(P_n,M)$. 
But if $M$ is in $\CM(A)$,
then $e_nM$ is a free $Z$ module and so
$P_n\otimes_Z e_nM\isom P_n\otimes V$,
for $V$ as stated.

In particular $P_n\otimes V$ is also in $\CM(A)$ and hence so is $\ker\eta_M$.
However, $e_n\eta_M$ is an isomorphism and so $\ker\eta_M$
vanishes at vertex $n$ and hence is zero.
Thus $\eta_M$ is injective and so $(e_n)M\isom P_n\otimes V$.

Hence we have the required short exact
sequence \eqref{eq:ses.trunM} and
applying the functor $\Hom_A(S_j,-)$ gives a long exact sequence containing
\begin{equation}
 \Hom_A(S_j,M) \to \Hom_A(S_j,\trun M) \to \Ext_A^1(S_j,P_n) \otimes V.
\end{equation}
But $\Hom_A(S_j,M)=0$, for $M$ in $\CM(A)$ and thus 
\eqref{eq:extSP} implies that 
\[
  \Hom_A(S_j,\trun M)=0 \quad\text{for $j\neq k$,}
\]
i.e. the socle of $\trun M$ is concentrated at the vertex $k$, as required.

For the last part, note that $M\mapsto P_n\otimes_Z\Hom(P_n,M)$ is
an exact functor. 
For $M$ in $\CM(A)$, this gives the submodule $(e_n)M$
and so we deduce that $M\mapsto \trun M$ is exact on $\CM(A)$, 
by the Snake Lemma.
Now $\trun\colon \md A\to \md B$ certainly maps projectives to projectives,
\New{but the projectives in $\CM(A)$ are precisely the projectives in $\md A$ 
(see Corollary~\ref{cor:gorenstein}) and 
the projectives in $\Sub Q_k$ are precisely the projectives in $\md B$ (see Remark~\ref{rem:subQk}).}
Thus the restricted functor also preserves projectives.
\end{proof}

\begin{lemma}\label{lem:alpiso}
For any $N$ in $\Sub Q_k$, the inclusion
\[
  \iota_N\colon \Hom_A(S_k,N) \to \Hom_Z(S_k,e_kN)
\]
is an isomorphism.
Furthermore, the natural map
\[
 \alpha_N\colon \Hom_A(S_k,N) \to \Ext_A^1(S_k,P_n) \otimes \Ext_A^1(N,P_n)^* 
\]
is an injection.
\end{lemma}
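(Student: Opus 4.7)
The plan is to handle the two claims in sequence; the injectivity of $\alpha_N$ will reduce, via Lemma~\ref{lem:extAZ}, to a routine consequence of the first part.

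For the isomorphism $\iota_N$: identifying $\Hom_A(S_k, N)$ with $\{v \in e_k N : xv = 0 = yv\}$ and $\Hom_Z(S_k, e_k N)$ with $\{v \in e_k N : tv = 0\}$, the inclusion is immediate since $t = xy = yx$ in $A$. To establish the reverse containment, I would take $v \in e_kN$ with $tv = 0$ and argue that $xv = 0 = yv$ directly. Consider the chain $v_j = x^j v$ in $B$: as $B$ is finite-dimensional and $x^k = 0$ in $B$, there is a smallest $J \geq 1$ with $v_J = 0$, and $v_{J-1}$ sits at vertex $k+J-1$ in the $A_{n-1}$ quiver. Using $xyv = 0$ together with the preprojective relation $xy = yx$, we have $y v_j = x^{j-1}(xyv) = 0$ for $j \geq 1$, so $v_{J-1}$ is annihilated by both $x$ (by minimality of $J$) and $y$. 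Thus $v_{J-1} \in \soc(N)$; but since $\soc(N) \sub e_k N$ by the assumption $N \in \Sub Q_k$, we must have $k + J - 1 = k$, i.e. $J = 1$, so $xv = 0$. A symmetric argument on the chain $y^j v$ (using $y^{n-k} = 0$ in $B$) gives $yv = 0$.

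For the injectivity of $\alpha_N$: by definition $\alpha_N(\phi)$ is the Yoneda pullback $\phi^* = \Ext_A^1(\phi, P_n) : \Ext_A^1(N, P_n) \to \Ext_A^1(S_k, P_n)$, viewed as an element of $\Ext_A^1(S_k, P_n) \otimes \Ext_A^1(N, P_n)^*$. The naturality of the isomorphism in Lemma~\ref{lem:extAZ} identifies $\phi^*$ with $\Ext^1_Z(e_k\phi, Z) : \Ext^1_Z(e_kN, Z) \to \Ext^1_Z(S_k, Z)$, since $e_k S_k \isom Z/(t) \isom S_k$ as $Z$-modules. If $\phi \neq 0$, then the first part of the lemma gives $e_k\phi = \iota_N(\phi) \neq 0$, whence $e_k\phi : S_k \to e_kN$ is $Z$-injective (as $S_k$ is $Z$-simple). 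The short exact sequence $0 \to S_k \to e_k N \to C \to 0$, combined with $\Ext^2_Z(C, Z) = 0$ (since $Z$ has global dimension one), produces via the long exact $\Ext$-sequence a surjection $\Ext^1_Z(e_k N, Z) \twoheadrightarrow \Ext^1_Z(S_k, Z) \isom k$, which is in particular nonzero. Hence $\alpha_N(\phi) \neq 0$.

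The main obstacle lies in the first part: making sharp use of the hypothesis ``$\soc(N)$ at $k$'' to deduce $v \in \soc(N)$ from the apparently weaker datum $tv = 0$. This argument relies crucially on the truncating relations $x^k = 0 = y^{n-k}$ in $B$, which guarantee that the $x$- and $y$-chains of iterates terminate, thereby producing a socle element whose vertex can be located in the linear $A_{n-1}$ quiver and compared against $k$. Once $\iota_N$ is in hand, the $\alpha_N$ half of the statement is essentially formal homological algebra over the one-dimensional base $Z$.
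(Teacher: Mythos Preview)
Your proof is correct and takes a genuinely different route for the first part. The paper argues by reduction to the injective hull: embedding $N$ into $J \cong Q_k^{\oplus m}$ yields a commuting square of inclusions in which $\iota_J$ is an isomorphism (by direct inspection of $Q_k$) and the map $\Hom_A(S_k,N) \to \Hom_A(S_k,J)$ is an isomorphism (since the embedding is essential), forcing $\iota_N$ to be one as well. Your direct element chase is more elementary and makes explicit use of the truncating relations in $B$; one small expository point is that ``$v_{J-1}$ is annihilated by $y$'' follows from your formula $yv_j = x^{j-1}(xyv)$ only when $J-1\geq 1$, so the argument is really a contradiction derived from assuming $J\geq 2$, which then forces $J=1$ --- exactly what you go on to conclude. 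For the second part, both you and the paper transfer the question via the naturality in Lemma~\ref{lem:extAZ} to $Z$-modules. The paper then invokes that $\Ext^1_Z(-,Z)$ is a duality on finite-length $Z$-modules, yielding an isomorphism $\Hom_Z(S_k,e_kN)\cong \Hom_Z\bigl(\Ext^1_Z(e_kN,Z),\Ext^1_Z(S_k,Z)\bigr)$, which is stronger than needed; your long-exact-sequence argument using $\Ext^2_Z=0$ is an equally valid and slightly more pedestrian route to the required injectivity.
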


\begin{proof}
For the first part, let $h\colon N\to J$ be the injective hull of $N$ 
\New{in $\md \Pi$.
As $N$ is in $\Sub Q_k$, we know
that $J\isom Q_k^{\oplus m}$, where $m=\dim \soc N$.}
Then we have a commuting square
\[
\begin{tikzpicture}[scale=0.8,
 arr/.style={black, -angle 60}]
\path (0,2) node (a0) {$\Hom_A(S_k,N)$};
\path (5,2) node (a1) {$\Hom_Z(S_k,e_kN)$};
\path (0,0) node (b0) {$\Hom_A(S_k,J)$};
\path (5,0) node (b1) {$\Hom_Z(S_k,e_kJ)$};
\path[arr] (a0) edge node[auto]{$\iota_N$} (a1);
\path[arr] (b0) edge node[auto]{$\iota_J$} (b1);
\path[arr] (a0) edge node[auto]{$h_*$} (b0);
\path[arr] (a1) edge (b1);
\end{tikzpicture}
\]
in which all maps are \emph{a priori} inclusions.
However, $h_*$ is an isomorphism, because $h$ is the injective hull
and $\iota_J$ is an isomorphism, because $\iota_{Q_k}$ is an isomorphism;
\New{indeed, both $Q_k$ (as an $A$-module) and $e_kQ_k$ (as a $Z$-module) have socle $S_k$.}
Hence $\iota_N$ is an isomorphism as required.

For the second part, note that the isomorphism of Lemma~\ref{lem:extAZ},
expanded as \eqref{eq:der.adj}, is functorial in $N$ and hence,
because $\iota_N$ is induced by the functor $e_kA\otimes_A-$, 
we get an identification of the product
\[
 \Hom_A(S_k,N)\otimes \Ext_A^1(N,P_n) \to  \Ext_A^1(S_k,P_n)
\]
with the product
\[
 \Hom_Z(S_k,e_kN)\otimes \Ext_Z^1(e_kN,Z) \to \Ext_Z^1(S_k,Z)
\]
and thus of $\alpha_N$ with the map
\[
 \Hom_Z(S_k,e_kN)\to \Hom_k(\Ext_Z^1(e_kN,Z),\Ext_Z^1(S_k,Z)).
\]
But $M\mapsto \Ext^1_Z(M,Z)$ is a duality (i.e. a contravariant involution) 
on finite length $Z$-modules and so in fact this map gives an isomorphism
\[
 \Hom_Z(S_k,e_kN)\isom \Hom_Z(\Ext_Z^1(e_kN,Z),\Ext_Z^1(S_k,Z)),
\]
which in particular yields the required injectivity.
\end{proof}

From now until the end of the section we must have $A=\Ah$.

\begin{theorem}\label{thm:main}
The exact functor 
\[
  \trun\colon \CM(A) \to \Sub Q_k
\]
is a quotient by the ideal generated by $P_n$.
Moreover, for any $N$ in $\Sub Q_k$ there is 
(up to isomorphism) a unique minimal $M$ in $\CM(A)$ with $\trun M\isom N$,
where ``minimal'' means that $M$ has no summand isomorphic to $P_n$.
Such a minimal $M$ satisfies
\begin{equation}\label{eq:rk=dimsoc}
 \rk(M) =\dim\soc \trun M .
\end{equation}
\end{theorem}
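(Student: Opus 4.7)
The plan. Proposition~\ref{prp:tosubQk} already tells us that $\trun$ is exact and that every $M\in\CM(A)$ sits in a short exact sequence $0\to P_n\otimes V\to M\to \trun M\to 0$ with $V$ a $\k$-vector space of dimension $\rk M$. Proving the theorem then reduces to: (i) constructing, for each $N\in\Sub Q_k$, a CM lift $M$ with no $P_n$ summand and with $\rk M=\dim\soc N$; (ii) proving uniqueness of such a minimal lift up to isomorphism; and (iii) identifying the kernel of $\trun$ on morphisms as the ideal generated by $\add P_n$, and showing $\trun$ is full modulo that ideal.

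For (i), I would parametrise extensions $0\to P_n\otimes V\to M\to N\to 0$ by elements $\xi\in \Ext_A^1(N,P_n\otimes V)=W\otimes V$, where $W=\Ext_A^1(N,P_n)$, and view $\xi$ as a $\k$-linear map $\eta\colon V^*\to W$. Since $Z$ is a PID, $M\in\CM(A)$ iff $\soc_A M=0$; by \eqref{eq:extSP} this reduces to the injectivity of the connecting map $\partial\colon \soc N\to V$, which a Yoneda diagram chase identifies with $\eta^\vee\circ\alpha_N$. The crucial refinement is that, because the Yoneda target $\Ext_A^1(S_k,P_n)$ corresponds via Lemma~\ref{lem:extAZ} to $t$-killed Ext over $Z$, the map $\alpha_N$ of Lemma~\ref{lem:alpiso} in fact lands \emph{isomorphically} onto the subspace $(W/tW)^*\subseteq W^*$. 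Hence $M$ is CM iff the composite $V^*\xrightarrow{\eta}W\twoheadrightarrow W/tW$ is surjective. Dually, since $\End_A(P_n\otimes V)\isom M_{\dim V}(Z)$, the relevant equivalence group on extensions is $GL_{\dim V}(Z)$ rather than $GL_{\dim V}(\k)$; a Smith-normal-form argument then shows that a CM lift $M$ has no $P_n$ summand iff that same composite is additionally injective. Combining, $M$ is a minimal CM lift iff the composite is an isomorphism, which forces $\dim V=\dim W/tW=\dim\soc N$ and hence $\rk M=\dim\soc N$. Existence is then immediate: take $V=\soc N$ and any $\k$-linear $\eta$ lifting a chosen iso $V^*\xrightarrow{\sim}W/tW$.

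For (ii), a Nakayama/Smith-normal-form argument shows that the action of $GL_{\dim V}(Z)=\Aut_A(P_n\otimes V)$ on the set of admissible $\eta$'s is transitive, so all such extensions yield isomorphic $M$; Krull-Schmidt in $\CM(\Ah)$ (Remark~\ref{rem:incomplete}) then upgrades this to uniqueness of the minimal lift up to isomorphism. For (iii), the description of $(e_n)M'$ in Proposition~\ref{prp:tosubQk} shows that $\trun f=0$ iff $\img f\subseteq P_n\otimes V'$, iff $f$ factors through an object of $\add P_n$, which identifies the kernel ideal. Fullness: given $g\colon \trun M\to \trun M'$, pull back the defining extension of $M'$ along the composite $M\twoheadrightarrow \trun M\xrightarrow{g}\trun M'$; the resulting extension of $M$ by $P_n\otimes V'$ is classified in $\Ext_A^1(M,P_n)\otimes V'$, which vanishes by Lemma~\ref{lem:extAZ} because $e_kM$ is $Z$-free. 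Splitting this extension yields the required lift $M\to M'$.

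The main obstacle I anticipate is the subtle interplay, driven by the $Z$-module structure on $W$ and on $\End_A(P_n\otimes V)$, between the CM criterion and the ``no $P_n$ summand'' criterion: both are most naturally phrased in terms of the mod-$t$ map $V^*\to W/tW$ rather than $\eta$ itself, and the fact that $\alpha_N$ is an isomorphism onto $(W/tW)^*$ (rather than merely an injection into $W^*$) is precisely what pins the rank down to $\dim\soc N$. Once this refinement is in hand, existence, the rank formula, uniqueness, and the quotient structure all flow from Lemma~\ref{lem:alpiso}, Krull-Schmidt, and the vanishing $\Ext_A^1(-,P_n)=0$ on $\CM(A)$.
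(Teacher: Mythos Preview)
Your proposal is correct and follows the same overall architecture as the paper: parametrise extensions $0\to P_n\otimes V\to M\to N\to 0$ by $\eta\colon V^*\to W=\Ext^1_A(N,P_n)$, use Lemma~\ref{lem:alpiso} to characterise when $M$ is Cohen--Macaulay via the socle, and invoke Krull--Schmidt for uniqueness. There are, however, two genuine technical differences worth noting.

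First, you extract from the proof of Lemma~\ref{lem:alpiso} the stronger statement that $\alpha_N$ is an \emph{isomorphism} onto $(W/tW)^*\subseteq W^*$, not merely an injection; this is implicit in the last displayed isomorphism of that proof but not stated. This lets you recast both the CM criterion and the ``no $P_n$ summand'' criterion in terms of the single mod-$t$ map $\bar\eta\colon V^*\to W/tW$ (surjective, respectively bijective), which is cleaner than the paper's route. The paper instead keeps $\alpha_N$ as an injection, proves the rank bound \eqref{eq:rkbound} directly, and then splits off excess $P_n$'s via a subspace argument using the injectivity of $P_n$ in $\CM(A)$; your Nakayama/$GL_d(Z)$-orbit argument achieves the same end. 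Note that your invocation of ``Smith normal form'' is slightly loose, since $W$ is not free over $Z$; what you actually need (and what works) is that over a local ring any two minimal generating tuples of a module are in the same $GL_d(Z)$-orbit, which is a Nakayama argument.

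Second, your fullness argument is simpler than the paper's. The paper lifts maps by passing to the universal extension $\widehat{N}$ and using its functoriality; you instead pull back the extension defining $M'$ along $M\to\trun M\xrightarrow{g}\trun M'$ and split the result using $\Ext^1_A(M,P_n)=0$ (from Lemma~\ref{lem:extAZ}, since $e_kM$ is $Z$-free). This is more direct. Given your $GL_d(Z)$-transitivity, the appeal to Krull--Schmidt for uniqueness is in fact redundant---transitivity already yields isomorphic middle terms---though the paper does need Krull--Schmidt for its universal-extension route.
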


\begin{proof}
For any $N$ in $\Sub Q_k$, we may form an extension
\begin{equation}\label{eq:ses.N}
 0 \to P_n\otimes V \to M\to N \to 0,
\end{equation}
determined by a classifying element
$\beta\colon V^* \to \Ext^1(N,P_n)$.
For any simple $A$-module $S$, 
applying $\Hom_A(S,-)$ to \eqref{eq:ses.N} yields
\[
 0\to \Hom(S,M)\to \Hom(S,N)\to \Ext^1(S,P_n)\otimes V .
\]
For $S\not\isom S_k$, we have $\Hom(S,N)=0$ and so $\Hom(S,M)=0$. 
For $S=S_k$, the second map is the composite
\[
 \Hom(S_k,N) \lra{\alpha_N}
 \Ext^1(S_k,P_n)\otimes\Ext^1(N,P_n)^* \lra{1\otimes \beta^*}
 \Ext^1(S_k,P_n)\otimes V  
\]
where $\alpha_N$ is the map of Lemma~\ref{lem:alpiso},
which is an injection. 
Hence, provided we choose $\beta$ so that $(1\otimes \beta^*)\circ \alpha_N$
remains injective, we will have that $\Hom(S_k,M)=0$ also and so $M$ is
torsion-free and hence in $\CM(A)$.

Now, applying $\Hom_A(P_n,-)$ to \eqref{eq:ses.N} yields an isomorphism 
$Z\otimes V\to e_nM$ and hence an identification of $N$ with $\trun M$,
by comparison with \eqref{eq:ses.trunM}.
Thus, the functor $\trun\colon \CM(A) \to \Sub Q_k$ 
is essentially surjective. 

On the other hand, we know from Proposition~\ref{prp:tosubQk} that any
$M$ in $\CM(A)$ is an extension
\begin{equation}\label{eq:MtotrunM}
 0 \to P_n\otimes V \to M\to \trun M \to 0,
\end{equation}
which will be classified by some 
$\beta_M\colon  V^* \to \Ext^1(\trun M,P_n)$
for which 
\[
 (1\otimes \beta_M^*)\circ \alpha_{\trun M}\colon 
\Hom(S_k,\trun M) \lra{} \Ext^1(S_k,P_n)\otimes V  
\]
is an injection. Thus certainly
\begin{equation}\label{eq:rkbound}
 \rk(M)=\dim V \geq \dim\Hom_A(S_k,\trun M) = \dim\soc \trun M. 
\end{equation}
In particular, this means that any $M$ satisfying \eqref{eq:rk=dimsoc}
is minimal.

If $\ker \beta_M= K^*\neq0$, then we can split of a summand of 
$P_n\otimes K$ from the extension, so in particular $M$ is not minimal.
So, suppose that $\beta_M$ is injective and choose a subspace $W^*\leq V^*$,
with $\dim W = \dim\soc \trun M$ 
such that the induced map
\[
 (1\otimes \beta_M^*)\circ \alpha_{\trun M}\colon 
\Hom(S_k,\trun M) \lra{} \Ext^1(S_k,P_n)\otimes W  
\]
is an isomorphism. Then, setting $U=\ker (V\to W)$, we have the
following diagram of four short exact sequences (with zeroes omitted)
\[
\begin{tikzpicture}[scale=0.8,
 arr/.style={black, -angle 60}]

\path (0,0) node(a0) {$P_n\otimes W$};
\path (0,2) node (b0) {$P_n\otimes  V$};
\path (0,4) node (c0) {$P_n\otimes  U$};
\path (3,0) node(a1) {$M'$};
\path (3,2) node (b1) {$M$};
\path (3,4) node (c1) {$P_n\otimes  U$};
\path (5.7,0) node(a2) {$\trun M$};
\path (5.7,2) node (b2) {$\trun M$};
\path[arr] (b0) edge (a0);
\path[arr] (c0) edge (b0);
\path[arr] (b1) edge (a1);
\path[arr] (c1) edge (b1);
\path[arr] (b2) edge node[auto] {$\isom$} (a2);
\path[arr] (a0) edge (a1);
\path[arr] (a1) edge (a2);
\path[arr] (b0) edge (b1);
\path[arr] (b1) edge (b2);
\path[arr] (c0) edge node[auto] {$\isom$} (c1);
\end{tikzpicture}
\]
But $P_n$ is injective in $\CM(A)$ and so, 
since $M'$ is in $\CM(A)$, the middle vertical sequence splits.
Thus $M$ is only minimal when $W=V$, in which case it
satisfies \eqref{eq:rk=dimsoc}.

The above argument also shows that every minimal $M$ in $\CM(A)$ is a summand
of the universal extension $\widehat{N}$ of $N=\trun M$, formed by taking 
$\beta$ to be the identity map, and further that 
$\widehat{N}\isom M\oplus (P_n\otimes U)$,
for some $U$ of determined dimension.
Hence, any two such $M$ are isomorphic,
by the Krull-Schmidt Theorem.

Now, it is straightforward to see that the kernel of the functor
$\trun$ is precisely the ideal of maps factoring through $P_n$, because
firstly $\trun P_n=0$, while the functoriality of 
\eqref{eq:MtotrunM} implies that
any map killed by $\trun$ is in this ideal.

Thus, to see that the functor is the claimed quotient, 
it remains to show that every map $\trun M_1\to \trun M_2$,
lifts to a map $M_1\to M_2$.
It is sufficient to consider that case when both 
$M_i$ are minimal, and then, since such are summands of
the universal extension, with only $P_n$s as complements,
it is sufficient to consider the case of the universal extension.
But the universal extension is formed by taking a cone in the 
derived category on the natural map 
$\trun M_i\to P_n[1]\otimes \Hom(\trun M_i,P_n[1])^*$,
so any map $\phi\colon \trun M_1\to \trun M_2$ does lift
(non-uniquely) to a morphism of extensions
\[
\begin{tikzpicture}[scale=0.7,
 arr/.style={black, -angle 60}]
\path (0,2.4) node (a0) {$P_n\otimes V_1$};
\path (3,2.4) node (a1) {$\widehat{\trun M_1}$};
\path (6,2.4) node (a2) {$\trun M_1$};
\path (0,0) node (b0) {$P_n\otimes V_2$};
\path (3,0) node (b1) {$\widehat{\trun M_2}$};
\path (6,0) node (b2) {$\trun M_2$};
\path (-2.3,2.4) node (ax) {$0$};
\path (7.9,2.4) node (ay) {$0$};
\path (-2.3,0) node (bx) {$0$};
\path (7.9,0) node (by) {$0$};
\path[arr] (a0) edge node[auto]{$1\otimes \phi_*$} (b0);
\path[arr] (a1) edge node[auto]{$\widehat{\phi}$} (b1);
\path[arr] (a2) edge node[auto]{$\phi$} (b2);
\path[arr] (a0) edge (a1);
\path[arr] (a1) edge (a2);
\path[arr] (b0) edge (b1);
\path[arr] (b1) edge (b2);
\path[arr] (ax) edge (a0);
\path[arr] (a2) edge (ay);
\path[arr] (bx) edge (b0);
\path[arr] (b2) edge (by);
\end{tikzpicture}
\]
where $V_i=\Ext^1(\trun M_i,P_n)^*$, which is functorial in
$\trun M_i$, so that $\phi$ induces a map $\phi_*\colon V_1\to V_2$.
\end{proof}

\begin{corollary}\label{cor:stable}
The functor $\trun$ induces a triangle equivalence of stable categories
\[
 \underline{\trun}\colon \underline{\CM}(A) \to \underline{\Sub}\, Q_k
\]
and hence also an isomorphism of Ext groups
\[
  \Ext^1_A(X,Y) \isom \Ext^1_B(\trun X,\trun Y).
\]
\end{corollary}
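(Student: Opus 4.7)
The plan is to deduce both conclusions of the corollary from Theorem~\ref{thm:main} by exploiting the Frobenius structures on both sides. By Remark~\ref{rem:gorenstein}, $\CM(A)$ is a Frobenius category, and by Remark~\ref{rem:subQk} so is $\Sub Q_k$ (with exact structure inherited from $\md B$), so the associated stable categories are triangulated in the sense of Happel. To see that $\trun$ descends to a triangle functor $\underline{\trun}$, I would invoke Proposition~\ref{prp:tosubQk} for exactness and check that $\trun$ sends projective-injectives to projective-injectives: $\trun P_n = 0$ by construction, while for $j \neq n$ the module $\trun P_j = Be_j$ is a projective $B$-module lying in $\Sub Q_k$ (by Proposition~\ref{prp:tosubQk}), hence a projective-injective of $\Sub Q_k$ by Remark~\ref{rem:subQk}. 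Compatibility with syzygies (hence with the triangulated suspension $\Sigma$) then follows in the usual way from exactness.

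Essential surjectivity of $\underline{\trun}$ is immediate from Theorem~\ref{thm:main}, since every $N\in\Sub Q_k$ is isomorphic to $\trun M$ for some minimal $M\in\CM(A)$.

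The key content is fully faithfulness. By Theorem~\ref{thm:main}, $\trun$ is a quotient functor by the ideal $(P_n)$, so $\Hom_{\Sub Q_k}(\trun M_1,\trun M_2)$ equals $\Hom_{\CM(A)}(M_1,M_2)$ modulo the ideal of maps factoring through a sum of copies of $P_n$. Passing to stable categories, fully faithfulness of $\underline{\trun}$ amounts to showing that a map $\phi\colon M_1\to M_2$ in $\CM(A)$ has $\trun\phi$ factoring through a projective-injective of $\Sub Q_k$ if and only if $\phi$ itself factors through a projective-injective of $\CM(A)$, modulo maps factoring through $P_n$. The ``if'' direction is clear. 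For the converse, suppose $\trun\phi = g' f'$ with $f'\colon \trun M_1\to \trun I_0$ and $g'\colon \trun I_0\to\trun M_2$, where $I_0 = \bigoplus_{j\neq n} P_j^{\oplus a_j}$ is a lift of any given projective-injective in $\Sub Q_k$. Since $\trun$ is full, lift $f', g'$ to $\widetilde f, \widetilde g$ through $I_0$; then $\trun(\phi-\widetilde g\widetilde f)=0$, so $\phi-\widetilde g\widetilde f$ factors through some $P_n^{\oplus m}$, and hence $\phi$ itself factors through $I_0\oplus P_n^{\oplus m}$, a projective-injective of $\CM(A)$.

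For the Ext isomorphism, I would first observe that $\CM(A)$ is extension-closed in $\md A$ (an extension of $Z$-free modules over the PID $Z$ is $Z$-free) and likewise $\Sub Q_k$ is extension-closed in $\md B$ by Remark~\ref{rem:subQk}, so the extension groups computed in the exact subcategories agree with those in the ambient module categories. The Frobenius identification $\Ext^1(X,Y)\isom\underline{\Hom}(X,\Sigma Y)$, combined with the triangle equivalence $\underline{\trun}$ and its compatibility with $\Sigma$, then yields the claimed isomorphism. The main obstacle is the ideal comparison in the previous paragraph, but this is tractable precisely because Theorem~\ref{thm:main} has already established that $\trun$ is a genuine quotient (essentially surjective \emph{and} full), so maps $f',g'$ between images always lift to maps between representatives.
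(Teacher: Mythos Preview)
Your proof is correct and follows essentially the same approach as the paper's. The paper argues more tersely---it simply asserts that since $\trun$ is exact, preserves projectives, and is a quotient by the projective $P_n$, the induced $\underline{\trun}$ is full, faithful, and bijective on isomorphism classes (citing Happel)---whereas you spell out the ideal comparison underlying faithfulness and the extension-closedness needed to match the Frobenius $\Ext^1$ with $\Ext^1_A$ and $\Ext^1_B$; the paper also phrases the last step via $\Ext^1(X,Y)\cong\SHom(\Omega X,Y)$ rather than $\underline{\Hom}(X,\Sigma Y)$, which is of course equivalent.
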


\begin{proof}
As $\trun$ is exact and maps projectives to projectives (Proposition~\ref{prp:tosubQk}), it induces a 
triangle morphism $\underline{\trun}$ between the corresponding stable categories.
Since $\trun$ is a quotient by some projective (Theorem~\ref{thm:main}),
$\underline{\trun}$ is full and faithful and a bijection on isomorphism classes,
so it is a triangle equivalence (cf. \cite{Hap}).
Since $\Ext^1(X,Y)\cong\SHom(\Omega X,Y)$, in both $\CM(A)$ and $\Sub Q_k$
(see Remark~\ref{rem:subQk} for the latter),
we deduce that $\trun$ induces an isomorphism on $\Ext^1$.
\end{proof}

\begin{remark}\label{rem:main}
The functor $\trun\colon \CM(A) \to \Sub Q_k$ of Theorem~\ref{thm:main}
provides a one-to-one correspondence between the indecomposable modules
in $\CM(A)$, other than $P_n$, and the indecomposable modules in $\Sub Q_k$
and furthermore this restricts to a correspondence between
the indecomposable projectives in $\CM(A)$, other than $P_n$,
and the indecomposable projectives in $\Sub Q_k$.
Since $\trun$ induces an isomorphism on $\Ext^1$,
it also gives a one-to-one correspondence between
non-projective rigid indecomposables.
\end{remark}

\begin{remark}\label{rem:CTO}
Recall that, \New{in a stably 2-Calabi-Yau category}, a cluster tilting object $T$ (\aka~maximal 1-orthogonal object)
is one for which 
\[
   \add(T) = \{X : \Ext^1(T,X)=0 \}.
\]
By \cite[Thm.~II.1.8(a)]{BIRS}, 
if the category has some cluster tilting object,
then this condition is equivalent to the \emph{a priori} 
weaker condition of being maximal rigid. 
It is known, by \cite[Prop.~7.4]{GLS08}, that $\Sub Q_k$ does have at least one cluster tilting object and hence so does $\CM(A)$ as these are stably equivalent.

It is also known, by \cite[Prop.~7.1]{GLS08}, that any maximal rigid object in  $\Sub Q_k$
has $k(n-k)$ non-isomorphic indecomposable summands
and hence any maximal rigid object in $\CM(A)$
has $k(n-k)+1$ non-isomorphic indecomposable summands.
Such objects are called `complete rigid'.

\New{As a consequence, mutation of cluster tilting objects in $\CM(A)$ works
in the same way as in $\Sub Q_k$, as described in \cite[\S8]{GLS08}, that is, 
we may replace any non-projective rigid indecomposable $X$ in a complete rigid object $X\oplus T$
by a unique alternative non-projective rigid indecomposable $Y$ so that $Y\oplus T$ is also a complete rigid object
and furthermore $\dim \Ext^1(X,Y)=1$.
Thus the functor $\trun\colon \CM(A) \to \Sub Q_k$ gives a one-to-one correspondence between
cluster tilting objects in $\CM(A)$ and cluster tilting objects in $\Sub Q_k$ that is compatible with mutation.
}
\end{remark}

%================================================================
\goodbreak\section{Rank one modules}
\label{sec:rkone} % Sec 5
%================================================================

\newcommand{\rkone}[1]{L_{#1}}

In this section, we return to the situation where $Z,R,A$ denote 
either $\Zh,\Rh,\Ah$ or $\Zb,\Rb,\Ab$.
We study the simplest modules in $\CM(A)$, namely those with rank one.
Recall that rank is additive on short exact sequences, and so, in particular, on direct sums.
Hence any rank one CM-module is necessarily indecomposable,
because all summands of CM-modules are CM-modules and non-zero CM-modules have positive rank. 

In fact, we can classify all rank one modules and see that they are in canonical
one-to-one correspondence with the Pl\"ucker coordinates $\Pluck{I}$ in $\GCA{k}{n}$.

\begin{definition}\label{def:rkone}
For any $k$-subset $I\sub C_1$, define a rank one module
$\rkone{I}$ in $\CM(A)$ as follows.
For $i\in C_0$, set $e_i\rkone{I}=Z$ and, for $a\in C_1$, set 
\begin{align*}
 & \text{ $x_a\colon Z\to Z$ to be (multiplication by) $1$, if $a\in I$, or $t$, if $a\not\in I$,}\\
 & \text{ $y_a\colon Z\to Z$ to be (multiplication by) $t$, if $a\in I$, or $1$, if $a\not\in I$.}
\end{align*}
It is clear that $\rkone{I}$ is a free $Z$-module and that the relations $xy=yx=t$ hold.
We can also check that $x^k=y^{n-k}$ (see the proof of Proposition~\ref{pr:rkone}),
so $\rkone{I}$ is indeed in $\CM(A)$.
It is also clear that the image, under the comparison functor \eqref{eq:comparison},
of $\rkone{I}\in \CM(\Ab)$, defined with $Z=\Zb$, 
is $\rkone{I}\in \CM(\Ah)$, defined with $Z=\Zh$.
\end{definition}

For example, the module $\rkone{124}$ (in the case $n=5$, $k=3$) 
is pictured in Figure~\ref{fig:L124}.
As in Figure~\ref{fig:projective+dual},
each dot in column $i$ represents a $\k$-basis vector in the $Z$-module $e_i\rkone{124}$
and columns 0 and 5 should be identified so that the white dots coincide.
Note: the projective module $Ae_0$ in Figure~\ref{fig:projective+dual} is $\rkone{123}$.

\begin{figure}\centering
% === BEGIN PIC ===
\begin{tikzpicture}[scale=0.7]
\foreach \j in {0,...,5}
  {\path (\j,3.5) node (a) {$\j$};}
\path (0,3) node (a0) {$\circ$}; \path (5,2) node (a5) {$\circ$}; 
\foreach \v/\x/\y in
  {z2/3/2, a1/1/2, a2/2/1, a3/3/0, a4/4/1, 
   b0/0/1, b1/1/0, b2/2/-1, b4/4/-1, b5/5/0, c0/0/-1}
  {\path (\x,\y) node (\v) {$\bullet$};}
\foreach \j in {1,3,5}
  {\path (\j,-1.2) node {$\vdots$};}
\foreach \t/\h in
  {z2/a4, a0/a1, a1/a2, a2/a3, b0/b1, b1/b2, a3/b4, a4/b5}
  {\path[->,>=latex] (\t) edge[blue,thick] node[black,above right=-2pt] {$x$} (\h);}
\foreach \t/\h in
  {z2/a2, a4/a3, a5/a4, b5/b4, a3/b2, a2/b1, a1/b0, b1/c0}
  {\path[->,>=latex] (\t) edge[blue,thick] node[black,above left =-2pt] {$y$} (\h);}
\end{tikzpicture}
% === END PIC ===
\caption{The rank one module $\rkone{124}$.}
\label{fig:L124}
\end{figure}
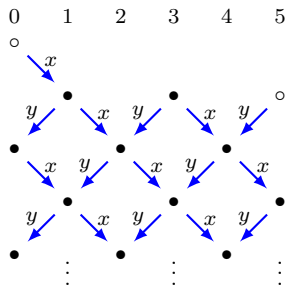

\begin{proposition}\label{pr:rkone}
Every rank one module in $\CM(A)$ is isomorphic
to $\rkone{I}$ for some (unique) $k$-subset $I\in C_1$. 
\end{proposition}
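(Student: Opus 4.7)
The plan is to analyze a rank one module $M \in \CM(A)$ directly as a representation of the quiver $Q$ from Section~\ref{sec:cat}. Since $\rk_Z(e_i M) = \rk(M) = 1$ and $M$ is $Z$-free (being Cohen-Macaulay over the PID $Z$), each $V_i := e_i M$ is a free $Z$-module of rank one. Choosing a $Z$-basis vector $v_i$ for each $V_i$, the arrows act by $x_i(v_{i-1}) = \xi_i v_i$ and $y_i(v_i) = \eta_i v_{i-1}$ for scalars $\xi_i, \eta_i \in Z$, $i \in C_1$. The strategy is to translate the defining relations of $A$ into constraints on these scalars, extract a canonical $k$-subset $I$, and then normalize the $v_i$ to realise $M\isom \rkone{I}$.

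For the preprojective relation, since $t = xy = yx$ is central and acts as multiplication by $t$ on each $V_i$, comparing its action computed via $xy$ and via $yx$ forces $\xi_i\eta_i = t$ for every $i$. Because $t$ is irreducible in the UFD $Z$, each index $i$ is of exactly one of two types: $\xi_i \in Z^\times$ (with $\eta_i \in tZ^\times$), or vice versa. The subset $I := \{i \in C_1 : \xi_i \in Z^\times\}$ is an invariant of the isomorphism class, since rescaling $v_i \mapsto \mu_i v_i$ with $\mu_i \in Z^\times$ multiplies $\xi_i$ by the unit $\mu_i \mu_{i-1}^{-1}$. For the hypersurface relation $x^k = y^{n-k}$, evaluating both sides on $v_0$ gives
\[
  \xi_1 \xi_2 \cdots \xi_k \;=\; \eta_n \eta_{n-1} \cdots \eta_{k+1}
\]
as elements of $Z$. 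Comparing $t$-adic valuations of the two sides yields $k - |I \cap \intvl{1}{k}| = |I \cap \intvl{k+1}{n}|$, and summing gives $|I| = k$; running this computation backwards simultaneously verifies that the $\rkone{I}$ of Definition~\ref{def:rkone} indeed satisfies $x^k = y^{n-k}$, as promised there.

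To conclude $M \isom \rkone{I}$, I rescale the $v_i$'s inductively by units to arrange $\xi_i = 1$ for $i \in I$ and $\xi_i = t$ for $i \notin I$; the consistency condition for closing the loop back at $v_n = v_0$ is precisely the ``unit part'' of the displayed relation (after cancelling the matching power of $t$), so it holds automatically. For uniqueness, any isomorphism $\phi\colon \rkone{I} \to \rkone{J}$ is encoded by units $\lambda_i \in Z^\times$ satisfying $\lambda_i \xi_i^J = \xi_i^I \lambda_{i-1}$; for $i \in I \setminus J$ this reduces to $t\lambda_i = \lambda_{i-1}$, impossible with both sides units, whence $I = J$. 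The one delicate point in the whole argument is the cycle-closing step of the normalization: the local condition $\xi_i\eta_i = t$ alone does not ensure compatibility of the rescalings around the loop $\intvl{1}{n}$, and it is precisely the global relation $x^k = y^{n-k}$ that forces the required product of units to be trivial.
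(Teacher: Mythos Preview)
Your proof is correct and follows essentially the same approach as the paper's: identify each $e_i M$ with $Z$, use the relation $xy=t$ to force each arrow-pair into the form $(1,t)$ or $(t,1)$ up to units, and then use $x^k=y^{n-k}$ both to conclude $|I|=k$ and to show the rescaling units close up around the cycle. Your version is slightly more explicit with the scalars $\xi_i,\eta_i$ (in effect reducing the cycle condition to $\prod_i \xi_i = t^{n-k}$), but the argument is the same.
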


\begin{proof}
For any rank one module $M$ in $\CM(A)$, each piece $e_iM\isom Z$,
where the choice of isomorphism is determined up to $\Aut(Z)$.
Comparing two adjacent pieces $e_iM$ and $e_{i+1}M$, the relation
$xy=t$ implies that, after rescaling just the isomorphism $e_{i+1}M\isom Z$,
we have either $(x,y)=(1,t)$ or $(x,y)=(t,1)$.
Thus starting from $e_0M$ and working around to $e_nM$, 
the module $M$ is characterised by this choice for each pair of arrows $(x,y)$
together with an element $\lambda\in\Aut(Z)$ 
giving the composite $Z\isom e_0M=e_nM\isom Z$.

However, the additional relation $x^k=y^{n-k}$ implies that $\lambda=1$ 
and furthermore that the number of times $x_a=1$ is precisely $k$.
This is necessary (and sufficient) to ensure that, 
for any choice of paths $y^{n-k}$ and $x^k$ with the same head and tail, 
the number of $a$ such that $y_a=t$ (i.e. $x_a=1$)
in $y^{n-k}$ is the same as the number of $a$ such that $x_a=t$ (i.e. $x_a\neq1$) in $x^k$.
Note that the two paths $y^{n-k}$ and $x^k$ involve complementary sets of edges in $C_1$.

Thus the module $M$ is isomorphic to $\rkone{I}$, for some $k$-subset $I\sub C_1$.
Furthermore, it is clear that $\rkone{I}\not\isom\rkone{J}$, for $I\neq J$,
because the restrictions of the modules to a single edge in $I\setminus J$
are not isomorphic.
\end{proof}

\begin{example}
In the case $k=2$, Demonet-Luo~\cite[Thm.~3.1]{DL} have shown that
the indecomposable objects in $\CM(A)$
are precisely the rank one modules $\rkone{ij}$.
Indeed they even show that $\CM(\Ab)$ satisfies Krull-Schmidt in this case.

The Auslander-Reiten quiver of $\CM(\Ah)$ is shown in Figure~\ref{fig:ARqG25}, 
in the case $n=5$.
For general $n$, these A-R quivers may be said to categorify the
Coxeter-Conway frieze patterns associated with triangulations of an $n$-gon.
Note that $ij=ji$,
so the left and right sides are identified by a glide reflection,
as in Figures~\ref{fig:subQ2} and~\ref{fig:edv}.

The A-R quiver of the stable category $\underline{\CM}(\Ah)$ is obtained by deleting 
the projective-injective objects $i(i+1)$ along the edges
and we thus recognise $\underline{\CM}(\Ah)$ as the cluster category of type $A_{n-3}$
(cf. Caldero-Chapoton-Schiffler \cite[Thm~5.2]{CCS}).
\end{example}

\begin{figure}\centering
% === BEGIN PIC ===
\begin{tikzpicture}[scale=0.9]
\foreach \v/\x/\y in
  {12/0/3, 13/1/2, 14/2/1, 15/3/0, % 53/0/1, 54/1/0, 
   23/2/3, 24/3/2, 25/4/1, 21/5/0, 34/4/3, 35/5/2, 31/6/1, 45/6/3, 41/7/2, 51/8/3}
  {\path[black] (\x,\y) node (x\v) {$\v$};} 
\foreach \t/\h in
  { 12/13, 13/14, 14/15, 23/24, 24/25, 25/21, 34/35, 35/31, % 53/54, 53/13, 54/14, 
   13/23, 14/24, 24/34, 15/25, 25/35, 35/45, 21/31,45/41,31/41,41/51}
  {\path[->,>=latex] (x\t) edge[blue,thick] (x\h);}  
\foreach \x/\y in
  {3/1, 5/1, 2/2, 4/2,6/2}
  {\path[black] (\x,\y) node {$- -$};} 
\end{tikzpicture}
% === END PIC ===
\caption{The Auslander-Reiten quiver of $\CM(\Ah)$ for $\grass25$.}
\label{fig:ARqG25}
\end{figure}

\begin{remark}\label{rem:rkonehoms}
It is straightforward to describe the space of homomorphisms between 
rank one modules, because such a homomorphism is given by a set of 
$Z$-linear maps $\theta_i\colon Z\to Z$, for each $i\in C_0$,
subject to the constraint of commuting with the action of $x$ and $y$.
Thus a basis for
$\Hom(L_I,L_J)$ is given by $t^\alpha=\bigl( t^{\alpha_i}\bigr)_{i\in C_0}$
for exponent vector $\alpha\in \NN^{C_0}$ 
satisfying
\[
  \alpha_{ha}-\alpha_{ta} = 
\begin{cases} 1 & \text{if $a\in J\setminus I$,}\\
  -1 & \text{if $a\in I\setminus J$,}\\
  0 & \text{otherwise.}
\end{cases}
\]
In other words, the exponent vectors of the basis are given by the set
\begin{equation}\label{eq:homdef}
 \hom(I,J) = \{ \alpha\in \NN^{C_0} : d\alpha = \eps_J - \eps_I \},
\end{equation}
where $d\colon \ZZ^{C_0}\to \ZZ^{C_1}$ is the coboundary map of
the cochain complex of $C$
and $\eps_I\in \ZZ^{C_1}$ is the characteristic function of $I$. 
Since any two solutions of \eqref{eq:homdef} differ by a constant,
we see that $\Hom(L_I,L_J)$ is a free rank one $Z$-module, 
generated by a homomorphism $t^\alpha$ with minimal exponent, that is,
for which $\alpha\in \NN^{C_0}$ has at least one zero component.
\end{remark}

\begin{definition}\label{def:crossing}
Two $k$-subsets $I,J$ of $\intvl{1}{n}$ are 
\emph{non-crossing} \cite[Def.~3]{Sc06}
(\aka~\emph{weakly separated} \cite{LZ98,OPS,Sc05}) 
if there are no cyclically ordered $a,b,c,d$ with
$a,c\in I\setminus J$ and $b,d\in J\setminus I$.
If such $a,b,c,d$ do occur, then $I$ and $J$ are \emph{crossing}.
\end{definition}

As expected, i.e. since we know it holds in the stable category by \cite{CCS}, 
this property is equivalent to $\Ext$-vanishing
in the categorification (see also \cite[Prop.~3.11]{DL} in the case $k=2$).

\begin{proposition}\label{prop:nc}
Let $I,J$ be $k$-subsets of $\intvl{1}{n}$.
Then $\Ext^1(\rkone{I},\rkone{J})=0$ if and only if $I$ and $J$ are non-crossing.
\end{proposition}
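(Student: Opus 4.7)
The plan is to compute $\Ext^1(\rkone{I}, \rkone{J})$ directly from a projective presentation of $\rkone{I}$, exploiting the explicit description of Hom-spaces between rank one modules in Remark~\ref{rem:rkonehoms}.

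First I would identify the projective cover of $\rkone{I}$. A vertex $v \in C_0$ supports the top of $\rkone{I}$ exactly when $v \notin I$ and $v+1 \in I$, i.e.~when $v$ sits immediately before a maximal cyclic run of $I$. If $I$ has $r$ such runs, the projective cover has the form $\pi \colon \bigoplus_{s=1}^{r} P_{v_s} \twoheadrightarrow \rkone{I}$, with components realised by the minimal cochains of Remark~\ref{rem:rkonehoms}. When $r=1$ the module $\rkone{I}$ is itself a projective $P_v$ and the proposition is trivial, since a cyclic interval is non-crossing with every $k$-subset. For $r \geq 2$, rank additivity forces the syzygy $\Omega \rkone{I} = \ker \pi$ to be a CM-module of rank $r-1$, which a direct cocycle calculation identifies as an iterated extension of rank one modules $\rkone{I'}$, each $I'$ obtained from $I$ by swapping a run with an adjacent gap.

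Next, applying $\Hom_A(-, \rkone{J})$ to this presentation yields the cokernel sequence
\[
\Hom(P(\rkone{I}), \rkone{J}) \to \Hom(\Omega \rkone{I}, \rkone{J}) \to \Ext^1(\rkone{I}, \rkone{J}) \to 0,
\]
whose terms are free $Z$-modules with entries given explicitly by \eqref{eq:homdef}. The task is then to show this cokernel vanishes iff $I$ and $J$ are non-crossing. For the crossing direction, a cyclically ordered quadruple $a, c \in I \setminus J$, $b, d \in J \setminus I$ should let one write down an explicit non-split extension $0 \to \rkone{J} \to E \to \rkone{I} \to 0$, with $E$ of rank two and off-diagonal cocycle concentrated at the edges surrounding the quadruple; the cyclic relation $x^k = y^{n-k}$ obstructs trivialising this cocycle by any vertex-wise rescaling. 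For the non-crossing direction, I would exhibit, for any extension, a consistent modification of the vertex splittings that kills the cocycle, using that non-crossing subsets can be inductively ``unlinked'' along the cycle.

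The hardest part will be the bookkeeping in the non-crossing case. To sidestep the most delicate combinatorics, I would reduce to the complete case (faithful flatness of $\widehat{Z}$ over $\overline{Z}$ via the comparison functor~\eqref{eq:comparison} makes $\Ext^1$-vanishing insensitive to completion) and then transport the computation through the stable triangle equivalence $\underline{\CM}(\widehat{A}) \simeq \underline{\Sub}\,Q_k$ of Corollary~\ref{cor:stable}, where the Geiss--Leclerc--Schr\"oer framework for $\Sub Q_k$ yields the analogous Ext-vanishing criterion. The $k=2$ prototype in \cite[Prop.~3.9]{DL} furnishes a model for the direct cocycle argument.
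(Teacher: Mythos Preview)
Your starting point matches the paper's: both begin with the projective presentation of $\rkone{I}$ with cover $\bigoplus_{u\in U} P_u$, where $U=\{u\notin I : u+1\in I\}$. But from there the paper is much more direct. It writes down the \emph{second} term of the presentation as well, namely $\bigoplus_{v\in V} P_v$ with $V=\{v\in I : v+1\notin I\}$, and observes that the resulting $(m+1)\times(m+1)$ matrix $D$ is supported on just two cyclic diagonals, with entries $x^{v-u}$ and $-y^{u-v}$. Applying $\Hom(-,\rkone{J})$ replaces these by powers of $t$ whose exponents count $|[u,v)\setminus J|$ and $|J\cap[v,u)|$ respectively. Since $\Hom(\Omega\rkone{I},\rkone{J})$ is free of rank $m$ over $Z$ and $\Ext^1$ has finite length, the vanishing of $\Ext^1$ is equivalent to $D^*|_{t=0}$ having rank $m$. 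The bidiagonal shape means every $m\times m$ minor is a single monomial, and the criterion for all of them to vanish translates directly into the crossing condition. No cocycle chasing, no splitting arguments, no appeal to $\Sub Q_k$.

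Your plan has a genuine gap in the non-crossing direction. You propose to transport the question through Corollary~\ref{cor:stable} to $\Sub Q_k$ and invoke ``the analogous Ext-vanishing criterion'' from Geiss--Leclerc--Schr\"oer, but no such criterion is stated in \cite{GLS08}. One could try to manufacture it via the chain Scott~$\Rightarrow$ non-crossing Pl\"ucker coordinates lie in a common cluster $\Rightarrow$ the corresponding modules in $\Sub Q_k$ form part of a cluster tilting object $\Rightarrow$ Ext-vanishing, but this relies on identifying $\trun\rkone{I}$ with the correct submodule of $Q_k$ and on the cluster--module correspondence, which is substantially heavier than the question at hand and risks circularity with the surrounding development. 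Your crossing direction (explicit non-split extension) is plausible but also not carried out. The paper's matrix approach handles both directions uniformly in a page; I would recommend abandoning the syzygy-filtration and $\Sub Q_k$ detours and instead writing $D$ and $D^*$ explicitly.
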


\newcommand{\shHom}[2]{\Hom({#1},{#2})}
\newcommand{\shExt}[2]{\Ext^1({#1},{#2})}

\begin{proof}
We compute $\Ext^1(\rkone{I},\rkone{J})$ using a projective presentation of $\rkone{I}$
\[
\bigoplus_{v\in V} P_v \lra{D} \bigoplus_{u\in U} P_u \to \rkone{I} \to 0,
\]
where $U=\{ u\not\in I : u+1 \in I\}$ and $V=\{ v \in I : v+1 \not\in I\}$.
Note that $U$ and $V$ are disjoint sets with the same number of elements, 
which alternate in the cyclic order.
This number is $m+1$, where $m=\rk(\Omega \rkone{I})$
and $\Omega \rkone{I}=\img D$ is the first syzygy.
The $(m+1)\times(m+1)$ matrix $D=(d_{vu})$ is supported on just two (cyclic) 
diagonals, as it has non-zero entries only
when $u,v$ are adjacent in $U\cup V$.
Indeed
\begin{equation}\label{eq:Dcoeffs}
 d_{vu} = \begin{cases}
 x^{v-u} & \text{when $u$ precedes $v$,}\\
 -y^{u-v} & \text{when $u$ follows $v$,}\\
  0 & \text{otherwise.}
 \end{cases}
\end{equation}
Applying $\Hom(-,\rkone{J})$ yields
\[
\begin{tikzpicture}[scale=0.8,
 arr/.style={black, -angle 60}]
\path (0.2,3) node (b0) {$\displaystyle\bigoplus_{u\in U}^{\phantom{U}} \shHom{P_u}{\rkone{J}}$};
\path (5,0.75) node(a1) {$\displaystyle\bigoplus_{v\in V} \shHom{P_v}{\rkone{J}}$};
\path (5,3) node (b1) {$\shHom{\Omega \rkone{I}}{\rkone{J}}$};
\path (5,4.25) node (c1) {$0$};
\path (9.1,3) node(b2) {$\shExt{\rkone{I}}{\rkone{J}}$};
\path (11.4,3) node (by) {$0$};
\path[arr] (b0) edge (b1);
\path[arr] (b1) edge (b2);
\path[arr] (b2) edge (by);
\path[arr] (c1) edge (b1);
\path[arr] (b1) edge (a1);
\path[arr] (b0) edge node[auto] {$D^*$} (a1);
\end{tikzpicture}
\]
where matrix $D^*=(d^*_{vu})$ is given by
\begin{equation}\label{eq:D*coeffs}
 d^*_{vu} = \begin{cases}
 t^a & \text{$a=\# [u,v)\setminus J$, when $u$ precedes $v$,}\\
 -t^b & \text{$b=\# J\cap [v,u)$, when $u$ follows $v$,}\\
  0 & \text{otherwise.}
 \end{cases}
\end{equation}
Note that $I$ is precisely the union of the intervals $[u,v)$ 
in the first case, and hence the complement of $I$ is the union
of the intervals $[v,u)$ 
in the second case.
Now $\Ext^1(\rkone{I},\rkone{J})$ is a finite length $Z$-module and
$\Hom(\Omega \rkone{I},\rkone{J})$ is a free $Z$-module of rank $m$.
Hence 
\[
  \Ext^1(\rkone{I},\rkone{J})=0
 \quad\iff\quad 
 \rk_{\k} (D^*_{|t=0})=m.
\]
However, the form of $D$ means that
every $m\times m$ minor of $D^*$ has precisely one term,
which consists of a product of two intervals, one on each diagonal.
One such interval may be empty if the other contains all but one entry.

Thus, all $m\times m$ minors of $D^*_{|t=0}$ will vanish precisely when 
each diagonal contains at least two zeroes and the zeroes on each diagonal
do not occur in non-overlapping intervals.
Since these zeroes come from strictly positive powers of $t$ in
\eqref{eq:D*coeffs}, this occurs precisely when $I$ and $J$ are crossing.
\end{proof}

\begin{remark}
As conjectured by Scott \cite{Sc05} and proved by Oh-Postnikov-Speyer \cite{OPS},
any maximal set $\maxNC$ of non-crossing subsets has $k(n-k)+1$ elements.
Hence Proposition~\ref{prop:nc} and Remark~\ref{rem:CTO} imply that
$T_\maxNC = \bigoplus_{J\in\maxNC} \rkone{J}$
is a complete rigid object in $\CM(\Ah)$ and thus a cluster tilting object.
\end{remark}

%================================================================
\goodbreak\section{Graded modules and profiles}
\label{sec:genfilt} % Sec 6
%================================================================

\newcommand{\Mlim}{M_*}
\newcommand{\QG}{Q}
\newcommand{\QGam}{\widetilde{Q}}
\newcommand{\compl}{\mathbb{F}}

The symmetry group $G$ of the ring $\Rb=\k[x,y]/(x^k-y^{n-k})$,
as introduced in Section~\ref{sec:cat},
is contained in the larger symmetry group 
\[
  \Gamma=\{(s,t)\in (\k^*)^2 : s^k=t^{n-k} \}.
\]
Indeed, $G=\Gamma\cap \SL_2(\k)$ and so there are dual short exact sequences
\begin{gather*}
 0 \to G \to \Gamma \lra{\det} \k^* \to 0, \\
 0 \to \ZZ \to \Gamma\dual \lra{\;\rho\;} G\dual \to 0.
\end{gather*}
Thus the McKay quiver $\QGam$ of $\Gamma$ (with vertex set $\Gamma\dual$)
may be visualised as a `lattice cylinder',
which is a $\ZZ$-cover $\rho\colon \QGam \to \QG$ of the McKay quiver of $G$.
For example, when $n=5$, $k=3$, we obtain a cylindrical quiver covering the circular
quiver in Figure~\ref{fig:mckayQ}
by matching the white dots to identify the left and right columns in
the infinite strip depicted in Figure~\ref{fig:covering}.
Note: despite the visual similarity, the upper part of this figure depicts a quiver,
whereas Figures~\ref{fig:projective+dual} and~\ref{fig:L124} depict modules.

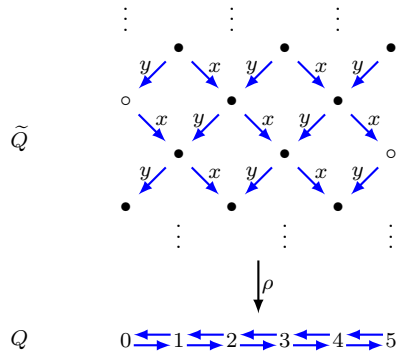
\begin{figure}\centering
% === BEGIN PIC ===
\begin{tikzpicture}[scale=0.7]
\path (-2,0.3) node {$\QGam$};
\path (-2,-3.5) node {$\QG$};
\foreach \j in {0,...,5}
  {\path (\j,-3.5) node (p\j) {$\j$};
   \path (\j,-3.6) node (q\j) {~};
   \path (\j,-3.4) node (r\j)  {~};}
\path[->,>=latex] (2.5,-2) edge[black,thick] node[black,right=-2pt] {$\rho$} (2.5,-3);
\foreach \j/\k in {0/1,1/2,2/3,3/4,4/5}
 {\path[->,>=latex] (q\j) edge[blue,thick] (q\k);
  \path[->,>=latex] (r\k) edge[blue,thick] (r\j); }  
\path (0,1) node (b0) {$\circ$}; 
\path (5,0) node (b5) {$\circ$}; 
\foreach \v/\x/\y in
  {z3/3/2, a1/1/2, a2/2/1, a3/3/0, a4/4/1, 
   b1/1/0, b2/2/-1, b4/4/-1, a5/5/2, c0/0/-1}
  {\path (\x,\y) node (\v) {$\bullet$};}
\foreach \v/\j in {1,3,5}
  {\path (\j,-1.4) node {$\vdots$};
   \path (\j-1, 2.7) node {$\vdots$};}
\foreach \t/\h in
  {z3/a4, a1/a2, a2/a3, b0/b1, b1/b2, a3/b4, a4/b5}
  {\path[->,>=latex] (\t) edge[blue,thick] node[black,above right=-2pt] {$x$} (\h);}
\foreach \t/\h in
  {z3/a2, a4/a3, a5/a4, b5/b4, a3/b2, a2/b1, a1/b0, b1/c0}
  {\path[->,>=latex] (\t) edge[blue,thick] node[black,above left =-3pt] {$y$} (\h);}
\end{tikzpicture}
% === END PIC ===
\caption{A cylindrical covering of a circular quiver (unwrapped)}
\label{fig:covering}
\end{figure}

If we impose the relations $xy=yx$ and $x^k=y^{n-k}$ on the path algebra of $\QGam$,
then the algebra we obtain is the incidence algebra of a poset 
$(\Gamma\dual, \leq)$.
This is closely related to the poset $I(A)$ of \cite[(13.15)]{Sim}. 

There is a covering functor (cf. \cite[(13.16)]{Sim}), given by completion,
\begin{equation}\label{eq:compl}
  \compl\colon \CM_\Gamma ( \Rb ) \to \CM_G( \Rh )
  \colon \bigoplus_{i\in\Gamma\dual} M_i \mapsto  \prod_{i\in\Gamma\dual} M_i
\end{equation}
which is invariant under the $\ZZ$-shift on $\CM_\Gamma ( \Rb )$,
as the product is just considered to be $G\dual$-graded, 
via $\rho\colon\Gamma\dual\to G\dual$. 
We may also realise this covering functor as the composite
of the forgetful functor $\CM_\Gamma(\Rb)\to \CM_G(\Rb)$ with the
comparison functor $c\colon \CM(\Ab) \to \CM(\Ah)$ of \eqref{eq:comparison}.

Modules $M$ in $\CM_\Gamma ( \Rb )$ are finitely-generated and torsion-free
and hence correspond precisely to finitely-generated, complete poset 
representations (in the sense of \cite[\S13.2]{Sim}), that is, 
representations in which all arrows are inclusions and
\begin{equation}\label{eq:Mlim}
M_i= \begin{cases}
     0 & \text{for $i\ll 0$,}\\
     \Mlim & \text{for $i\gg 0$,}
     \end{cases}
\end{equation} 
where $\Mlim$ is a vector space of dimension $\rk M$.
Thus every module $M$ in $\CM_\Gamma ( \Rb )$
may be identified with a finite subspace configuration
in $\Mlim$, pulled-back from a finite quotient poset of $\Gamma\dual$,
determined by its dimension vector
\[
 \alpha\colon \Gamma\dual \to \NN\colon i\mapsto \dim M_i.
\]

\begin{remark}\label{rem:Qgrade}
We may also consider $\Rb$, and hence $\Ab=\Rb \tgr G$, to be 
$\QQ$-graded rings with $\deg x=k/n$ and $\deg y = (n-k)/n$. 
We choose this particular grading so that the centre of $\Ab$ is $\k[t]$ with $\deg t=1$,
but one could rescale to make all $\ZZ$-graded if preferred.
There is then a natural completion map $\CM^\QQ(\Ab)\to \CM(\Ah)$,
invariant under $\QQ$-shift, which is essentially equivalent to \eqref{eq:compl}
in the following sense.
By regarding $\QQ$-graded $\Ab$-modules as $G\dual \times \QQ$-graded
$\Rb$-modules, they may also be considered as representations of an infinite 
quiver with vertex set $G\dual \times \QQ$.
The identity component of this quiver is precisely the 
McKay quiver $\QGam$ and every other component is a $\QQ$ shift of this.
Thus the category $\CM_\Gamma ( \Rb )$ 
may be considered as a subcategory of $\CM^\QQ(\Ab)$,
which is essentially equivalent to the whole category in that every indecomposable
in $\CM^\QQ(\Ab)$ is a $\QQ$-shift of an indecomposable in $\CM_\Gamma ( \Rb )$.
\end{remark}

\begin{lemma}\label{lem:gradable}
Every rigid module $M$ in $\CM_G( \Rh )$ is the completion of
a module $\Mtil$ in $\CM_\Gamma ( \Rb )$, i.e. $M\isom \compl(\Mtil)$.
If further $M$ is indecomposable, then $\Mtil$ is unique up to grade shift.
\end{lemma}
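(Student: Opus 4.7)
My plan is to exploit the action of the larger torus $\Gamma$ on $\Rh$ (containing the action of $G$) to lift a rigid $M$ from a $G$-equivariant to a $\Gamma$-equivariant $\Rh$-module. This is enough, since a $\Gamma$-equivariant CM $\Rh$-module is $\Gamma\dual$-graded with finite-dimensional graded pieces and hence comes, by summing rather than taking products of its graded pieces, from an object of $\CM_\Gamma(\Rb)$. Rigidity of $M$ forces the isomorphism class of $M$ to be fixed by the induced action of the quotient torus $T = \Gamma/G \isom \k^*$, and linearisation then promotes this to an actual $\Gamma$-equivariant structure.

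For the first step --- that $\tau^* M \isom M$ for all $\tau \in T$ --- rigidity $\Ext^1_{\Ah}(M,M)=0$ means $M$ has no non-trivial infinitesimal deformations, making it an isolated smooth point in the moduli of $\CM(\Ah)$-modules of its dimension vector. The $T$-orbit of $[M]$ is then a connected algebraic subset of this discrete rigid locus, hence constant. The second step --- linearising the $T$-action to an actual $T$-equivariant structure on $M$ --- is the main technical obstacle. My approach is to choose a minimal projective presentation $P_1 \lra{d} P_0 \to M \to 0$ in $\CM(\Ah)$. Each $P_i$, being a direct sum of indecomposable projectives $\Ah e_j$, canonically carries a $\Gamma\dual$-grading, giving lifts $\widetilde{P}_0, \widetilde{P}_1 \in \CM_\Gamma(\Rb)$; the task then reduces to finding a representative $\widetilde{d}$ of the $(\Aut P_0 \times \Aut P_1)$-orbit of $d$ in $\Hom_{\Ah}(P_1, P_0)$ which is $T$-homogeneous, i.e.\ a graded map $\widetilde{P}_1 \to \widetilde{P}_0$ after some grade shift. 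By rigidity, this orbit is Zariski-open at $[d]$, and by the first step it contains the whole $T$-orbit of $d$; a $T$-fixed representative then exists by the standard linearisability of a torus action at a smooth fixed point (by Luna's slice theorem, or equivalently by the vanishing of the relevant non-abelian cohomology of a torus acting on algebraic automorphism groups). The cokernel of any such $\widetilde{d}$ is the desired $\Mtil$.

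For uniqueness, suppose $\Mtil_1, \Mtil_2 \in \CM_\Gamma(\Rb)$ both complete to an indecomposable $M$. Each endows $M$ with a $\Gamma$-equivariant structure lifting its $G$-equivariant one, and any two such structures differ by an algebraic character $\chi \colon T \to Z(\End_{\Ah} M)^\times$. Since $M$ is indecomposable, $\End M$ is a complete local ring with residue field $\k$, so the only algebraic characters of $T \isom \k^*$ landing in its centre are of the form $\lambda \mapsto \lambda^n \cdot 1$ for some $n \in \ZZ$. This is exactly the effect of shifting the $\Gamma\dual$-grading by $n$, yielding $\Mtil_1 \isom \Mtil_2[n]$ as required.
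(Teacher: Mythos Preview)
Your geometric approach via the torus $T=\Gamma/G$ is natural and genuinely different from the paper's, but it has two real gaps.

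The paper's proof is short and purely algebraic: it invokes (a minor extension of) Keller--Van den Bergh \cite[Prop.~7.1]{KVdB}, whose argument runs through \emph{connections}. The $\QQ$-grading on $\Ab$ (Remark~\ref{rem:Qgrade}) gives an Euler derivation $E$ on $\Ah$; a connection on $M$ is a $\k$-linear operator $\nabla\colon M\to M$ with $\nabla(am)=E(a)m+a\nabla(m)$. Connections form an affine space under $\End_{\Ah}(M)$, and the obstruction to their existence lies in $\Ext^1_{\Ah}(M,M)$, which vanishes by rigidity. The generalised eigenspace decomposition of $\nabla$ then furnishes the grading directly. For uniqueness (citing \cite{AR89} or extending the same argument), two connections differ by an element of $\End_{\Ah}(M)$; when $M$ is indecomposable this endomorphism has a single eigenvalue modulo the radical, and that eigenvalue is the shift. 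No moduli spaces, no linearisation.

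Your existence argument has a soft spot at the linearisation step: $\Hom_{\Ah}(P_1,P_0)$ and $\Aut_{\Ah}(P_i)$ are infinite-dimensional over $\k$ (indeed pro-algebraic), so Luna's slice theorem and the usual ``$T$-stable orbit contains a $T$-fixed point'' statements do not apply as stated; you would need to pass carefully to truncations or to a pro-algebraic framework, and the claim that the orbit is ``Zariski-open at $[d]$'' needs a precise meaning in this setting.

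Your uniqueness argument is incorrect as written. Two $\Gamma$-equivariant structures $\rho_1,\rho_2$ on $M$ lifting the same $G$-structure differ by $c(\gamma)=\rho_2(\gamma)\rho_1(\gamma)^{-1}\in\Aut_{\Ah}(M)$, which is a $1$-\emph{cocycle} for the $\rho_1$-conjugation action of $T$ on $\Aut_{\Ah}(M)$, not a homomorphism into the centre. To conclude that the lifts differ only by a shift you must compute $H^1\bigl(T,\Aut_{\Ah}(M)\bigr)$: writing $\Aut_{\Ah}(M)\cong \k^\times\ltimes(1+\mathrm{rad})$ for indecomposable $M$, one needs that $H^1$ of a torus in the pro-unipotent factor vanishes, leaving only $\Hom(T,\k^\times)\cong\ZZ$, i.e.\ the shifts. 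This can be carried out, but it is considerably more work than you indicate, and in the end it reproduces the connection argument in geometric dress.
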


\begin{proof}
The fact that any rigid object in $\md_G(\Rh)= \md(\Ah)$ 
lifts to $\md^\QQ(\Ab)$ follows by a minor generalisation
(to the case when $A_0$ is semisimple) of an argument of
Keller-Murfet-Van den Bergh \cite[Prop.~6.1]{KMV}.
As observed in Remark~\ref{rem:Qgrade}, 
by shifting indecomposable summands, we may move the lift into $\md_\Gamma ( \Rb )$.
If $M$ is free over the centre $\Zh=\k[[t]]$, 
then $\Mtil$ will be free over $\Zb=\k[t]$,
so the lift of a CM-module is a CM-module.

A similarly minor modification of \cite[Prop~9]{AR89}  
says that $\Mtil$ is unique up to grade shift when $M$ is indecomposable.
One may also extend Keller-Murfet-Van den Bergh's argument to obtain this by noting
that, since two connections differ by an endomorphism, when $M$ has 
local endomorphism ring, the eigenvalues of the two connections,
and thus the gradings on $\Mtil$, will differ by a shift. 
\end{proof}

\begin{remark}\label{rem:rigid}
A modification of \cite[Prop~8(b)]{AR89} says that the completion 
of any indecomposable module is indecomposable.
However, while the lift $\Mtil$ of a rigid module $M$ must be rigid,
the converse is not necessarily true.
This is because $\Ext_\Gamma^1(\Mtil,\Mtil)$ is just given by the degree 0 
part of $\Ext_G^1(M,M)$.
\end{remark}

We can use the graded lift of a rigid indecomposable module 
in $\CM(\Ah)=\CM_G(\Rh)$ to give a compact notation
for it, as we will now explain.
Observe first that the dimension vector $\alpha$ of any module in $\CM_\Gamma ( \Rb )$ is
a non-decreasing function on the poset $\Gamma\dual$ and furthermore, the sets
$S_d = \{i\in \Gamma\dual : \alpha_i \geq d \}$
will be order ideals and may be described by the \emph{contours} which separate $S_d$ from its
complement. Each such contour is a sequence of up or down steps 
(as in Figure~\ref{fig:contours}) cutting arrows 
of type $x$ or $y$, respectively,
which encircles the cylinder,
so that one arrow of each index $1,\ldots, n$ is cut.
Thus the shape of a contour (which determines it up to shift) may be described 
by the $k$-subset $I\sub\intvl{1}{n}$ that gives the down-steps,
i.e the indices on the $y$-arrows that are cut by the contour.

Figure~\ref{fig:contours} shows an example of a dimension vector with its contours (in the middle),
together with a list of the shapes of those contours (on the left) and the
finite quotient poset (on the right) from which any representation of this type must pull-back.

\begin{definition}\label{def:profile}
The (ordered) collection of contour shapes for a module or dimension vector 
will be called its \emph{profile}.
\end{definition}

\begin{figure}\centering
\begin{tikzpicture}[scale=0.4]
\newcommand{\offset}{(-4,3)}
\draw \offset++(0,1) node {\sm 137};
\draw \offset++(-0.75,0.5)--++(1.5,0);
\draw \offset++(0,0) node {\sm 125};
\draw \offset++(-0.75,-0.5)--++(1.5,0);
\draw \offset++(0,-1) node {\sm 124};
\draw \offset++(-0.75,-1.5)--++(1.5,0);
\draw \offset++(0,-2) node {\sm 238};
\foreach \x/\y/\dim in
  {3/4/1, 6/5/1,7/6/1, 1/2/3, 4/1/3, 9/4/3,
   0/3/2, 1/4/2, 2/3/2, 3/2/2, 4/3/2, 5/2/2, 5/4/2, 6/3/2, 7/4/2, 8/5/2, 9/6/2,
   0/1/4, 1/0/4, 2/1/4, 3/0/4, 4/-1/4, 5/0/4, 6/1/4, 7/2/4, 8/3/4, 9/2/4}
  {\path[black] (\x,\y) node {\sm \dim};} 
\draw [blue,thick] (0,4.1)
 --++(1,1)--++(1,-1)--++(1,1)--++(1,-1)--++(1,1)--++(1,1)--++(1,1)--++(1,-1) 
 --++(1,1);% 137
\draw [blue,thick] (0,3.9)
 --++(1,1)--++(1,-1)--++(1,-1)--++(1,1)--++(1,1)--++(1,-1)--++(1,1)--++(1,1) 
 --++(1,1);% 125
\draw [blue,thick] (0,2.1)
 --++(1,1)--++(1,-1)--++(1,-1)--++(1,1)--++(1,-1)--++(1,1)--++(1,1)--++(1,1) 
 --++(1,1);% 124
\draw [blue,thick] (0,1.9)
 --++(1,-1)--++(1,1)--++(1,-1)--++(1,-1)--++(1,1)--++(1,1)--++(1,1)--++(1,1) 
 --++(1,-1);% 238
\draw (2,-0.5) node {$\vdots$};
\draw (6,-0.5) node {$\vdots$};
\draw (8, 1.5) node {$\vdots$};
\draw[dashed] (0.5,4.75)--(0.5,-1);
\draw[dashed] (8.5,6.75)--(8.5,-1);
\foreach \j in {0,...,8}
 { \path (0.5+\j,-2) node [black] {$\j$};}
 \renewcommand{\offset}{(15,2)} 
 \path \offset++(0,-1.5) node (a) {\sm 4}; 
 \path \offset++(-1,0) node (b1){\sm 3};
 \path \offset++(1,0)  node (b2) {\sm 3};
 \path \offset++(0,1.5)  node (c) {\sm 2};
 \draw \offset++(-1,3) node (d1) {\sm 1};
 \draw \offset++(1,3) node (d2) {\sm 1};
 \draw (a)--(b1);  \draw (a)--(b2); 
 \draw (b1)--(c);  \draw (b2)--(c);
 \draw (c)--(d1);  \draw (c)--(d2);
\end{tikzpicture} 
\caption{Profile, contours and poset for some dimension vector.}
\label{fig:contours}
\end{figure}
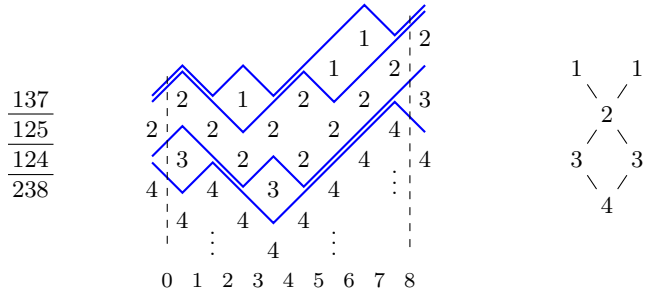

Note that the contours of an indecomposable module must be `close-packed' in the sense
that there is no (unoriented) path in $\QGam$ that goes around the cylinder between the contours.
If there were, then the associated quotient poset would have
a pinch-point and an indecomposable module
must be constant above or below this point.
In the example in Figure~\ref{fig:contours}, the contours below and above 2 are not close-packed and the quotient
poset has a pinch point at 2. Hence there will be the following direct sum decomposition.

\begin{equation*}
\begin{tikzpicture}[xscale=0.35,yscale=0.25]
\newcommand{\offset}{(0,2)} 
 \path \offset++(0,-1.5) node (a) {$4$}; 
 \path \offset++(-1,0) node (b1){$3$};
 \path \offset++(1,0)  node (b2) {$3$};
 \path \offset++(0,1.5)  node (c) {$2$};
 \draw \offset++(-1,3) node (d1) {$1$};
 \draw \offset++(1,3) node (d2) {$1$};
 \draw \offset++(2.5,1) node {$=$};
 \renewcommand{\offset}{(5,2)} 
 \path \offset++(0,-1.5) node (a) {$2$}; 
 \path \offset++(-1,0) node (b1){$2$};
 \path \offset++(1,0)  node (b2) {$2$};
 \path \offset++(0,1.5)  node (c) {$2$};
 \draw \offset++(-1,3) node (d1) {$1$};
 \draw \offset++(1,3) node (d2) {$1$};
 \draw \offset++(2.5,1) node {$\oplus$};
 \renewcommand{\offset}{(10,2)} 
 \path \offset++(0,-1.5) node (a) {$2$}; 
 \path \offset++(-1,0) node (b1){$1$};
 \path \offset++(1,0)  node (b2) {$1$};
 \path \offset++(0,1.5)  node (c) {$0$};
 \draw \offset++(-1,3) node (d1) {$0$};
 \draw \offset++(1,3) node (d2) {$0$};
\end{tikzpicture} 
\end{equation*}

Thus, by Lemma~\ref{lem:gradable}, we can assign to any rigid indecomposable module 
$M$ in $\CM_G(\Rh)$ the profile of a graded lift $\Mtil$ in $\CM_\Gamma(\Rb)$. 
This profile is an invariant of $M$ and determines the dimension vector of $\Mtil$ up to a shift.

\begin{example}\label{ex:profiles}
Figure~\ref{fig:3contours}
shows a collection of profiles of certain rigid indecomposable modules
in $\CM_G(\Rh)$, 
in the case $n=8$, $k=3$, together their lifted graded dimension vectors 
(with 0's omitted) with their contours and 
the associated type of subspace configurations.

In the third subspace configuration, the first `1' must be the intersection of the two `2's
that it is contained in, so may effectively be omitted. Thus all three types of subspace configuration
correspond to roots of $E_6$, and so there is a unique rigid indecomposable configuration of this type
and hence a unique (up to shift) rigid indecomposable graded module 
with the corresponding dimension vector and profile.
It can be independently checked that the completions of these graded modules
are also rigid (cf. Remark~\ref{rem:rigid}).
\end{example}

\begin{figure}
\begin{center}
\begin{tikzpicture}[scale=0.4]
\newcommand{\offset}{(-4,2)}
\draw \offset++(0,1) node {\sm 368};
\draw \offset++(-0.75,0.5)--++(1.5,0);
\draw \offset++(0,0) node {\sm 258};
\draw \offset++(-0.75,-0.5)--++(1.5,0);
\draw \offset++(0,-1) node {\sm 147};
\foreach \v/\x/\y/\dim in
  {01/0/1/3, 12/1/2/3, 21/2/1/3, 32/3/2/3, 43/4/3/3, 52/5/2/3,
   63/6/3/3, 74/7/4/3, 83/8/3/3, 94/9/4/3, 03/0/3/2, 85/8/5/2,
   23/2/3/2, 34/3/4/1, 54/5/4/2, 65/6/5/1}
  {\path[black] (\x,\y) node (x\v) {\sm \dim};} 
\draw [blue,thick] (0,4.15)
 --++(1,-1)--++(1,1)--++(1,1)--++(1,-1)--++(1,1)--++(1,1)--++(1,-1)--++(1,1) 
 --++(1,-1);% 368
\draw [blue,thick] (0,3.95)
 --++(1,-1)--++(1,1)--++(1,-1)--++(1,1)--++(1,1)--++(1,-1)--++(1,1)--++(1,1) 
 --++(1,-1);% 258
\draw [blue,thick] (0,1.75)
 --++(1,1)--++(1,-1)--++(1,1)--++(1,1)--++(1,-1)--++(1,1)--++(1,1)--++(1,-1) 
 --++(1,1);% 147
\draw (1,0.5) node {$\vdots$};
\draw (4,1.5) node {$\vdots$};
\draw (7,2.5) node {$\vdots$};
\draw[dashed] (0.5,4.25)--(0.5,0);
\draw[dashed] (8.5,6.25)--(8.5,0);
\foreach \j in {0,...,8}
 { \path (0.5+\j,-1) node [black] {$\j$};}
 \renewcommand{\offset}{(15,2)} 
 \path \offset++(0,-2) node (a) {\sm 3}; 
 \path \offset++(-2,0) node (b1){\sm 2};
 \path \offset++(0,0)  node (b2) {\sm 2};
 \path \offset++(2,0)  node (b3) {\sm 2};
 \draw \offset++(-2,2) node (c1) {\sm 1};
 \draw \offset++(0,2) node (c2) {\sm 1};
 \draw (a)--(b1);  \draw (a)--(b2);  \draw (a)--(b3);
 \draw (b1)--(c1);  \draw (b2)--(c2); 
\end{tikzpicture} 
\end{center}

\begin{center}
\begin{tikzpicture}[scale=0.4]
\newcommand{\offset}{(-4,2)}
\draw \offset++(0,1) node {\sm 147};
\draw \offset++(-0.75,0.5)--++(1.5,0);
\draw \offset++(0,0) node {\sm 368};
\draw \offset++(-0.75,-0.5)--++(1.5,0);
\draw \offset++(0,-1) node {\sm 258};
\foreach \v/\x/\y/\dim in
  {01/0/1/3, 10/1/0/3, 21/2/1/3, 30/3/0/3, 41/4/1/3, 52/5/2/3,
   61/6/1/3, 72/7/2/3, 83/8/3/3, 92/9/2/3, 
   12/1/2/1, 32/3/2/2, 43/4/3/1, 63/6/3/2, 74/7/4/1,94/9/4/1}
  {\path[black] (\x,\y) node (x\v) {\sm \dim};} 
\draw [blue,thick] (0,2.15)
 --++(1,1)--++(1,-1)--++(1,1)--++(1,1)--++(1,-1)--++(1,1)--++(1,1)--++(1,-1) 
 --++(1,1);% 147
\draw [blue,thick] (0,1.95)
 --++(1,-1)--++(1,1)--++(1,1)--++(1,-1)--++(1,1)--++(1,1)--++(1,-1)--++(1,1) 
 --++(1,-1);% 368
\draw [blue,thick] (0,1.75)
 --++(1,-1)--++(1,1)--++(1,-1)--++(1,1)--++(1,1)--++(1,-1)--++(1,1)--++(1,1) 
 --++(1,-1);% 258
\draw (2,-0.5) node {$\vdots$};
\draw (5,0.5) node {$\vdots$};
\draw (8,1.5) node {$\vdots$};
\draw[dashed] (0.5,3.25)--(0.5,-1);
\draw[dashed] (8.5,5.25)--(8.5,-1);
\foreach \j in {0,...,8}
 { \path (0.5+\j,-2) node [black] {$\j$};}
 \renewcommand{\offset}{(15,2)} 
 \path \offset++(0,-2) node (a) {\sm 3}; 
 \path \offset++(-2,0) node (b1){\sm 1};
 \path \offset++(0,0)  node (b2) {\sm 2};
 \path \offset++(2,0)  node (b3) {\sm 2};
 \draw \offset++(0,2) node (c2) {\sm1};
 \draw \offset++(2,2) node (c3) {\sm 1};
 \draw (a)--(b1);  \draw (a)--(b2);  \draw (a)--(b3);
 \draw (b3)--(c3);  \draw (b2)--(c2); 
\end{tikzpicture} 
\end{center}

\begin{center}
\begin{tikzpicture}[scale=0.4]
\newcommand{\offset}{(-4,2)}
\draw \offset++(0,1) node {\sm 258};
\draw \offset++(-0.75,0.5)--++(1.5,0);
\draw \offset++(0,0) node {\sm 147};
\draw \offset++(-0.75,-0.5)--++(1.5,0);
\draw \offset++(0,-1) node {\sm 368};
\foreach \v/\x/\y/\dim in
  {01/0/1/3, 10/1/0/3, 21/2/1/3, 32/3/2/3, 41/4/1/3, 52/5/2/3,
   63/6/3/3, 72/7/2/3, 83/8/3/3, 92/9/2/3, 
   03/0/3/1, 12/1/2/2, 23/2/3/1, 43/4/3/2, 54/5/4/1,74/7/4/2,85/8/5/1,94/9/4/2}
  {\path[black] (\x,\y) node (x\v) {\sm \dim};} 
\draw [blue,thick] (0,4.15)
 --++(1,-1)--++(1,1)--++(1,-1)--++(1,1)--++(1,1)--++(1,-1)--++(1,1)--++(1,1) 
 --++(1,-1);% 258
\draw [blue,thick] (0,1.95)
 --++(1,1)--++(1,-1)--++(1,1)--++(1,1)--++(1,-1)--++(1,1)--++(1,1)--++(1,-1) 
 --++(1,1);% 147
\draw [blue,thick] (0,1.75)
 --++(1,-1)--++(1,1)--++(1,1)--++(1,-1)--++(1,1)--++(1,1)--++(1,-1)--++(1,1) 
 --++(1,-1);% 368
\draw (3,0.5) node {$\vdots$};
\draw (6,1.5) node {$\vdots$};
\draw (8,1.5) node {$\vdots$};
\draw[dashed] (0.5,4.25)--(0.5,0);
\draw[dashed] (8.5,6.25)--(8.5,0);
\foreach \j in {0,...,8}
 { \path (0.5+\j,-1) node [black] {$\j$};}
 \renewcommand{\offset}{(15,2)} 
 \path \offset++(0,-2) node (a) {\sm 3}; 
 \path \offset++(-2,0) node (b1){\sm 2};
 \path \offset++(0,0)  node (b2) {\sm 2};
 \path \offset++(2,0)  node (b3) {\sm 2};
 \draw \offset++(-2,2) node (c1) {\sm 1};
 \draw \offset++(0,2) node (c2) {\sm 1};
 \draw \offset++(2,2) node (c3) {\sm 1};
 \draw (a)--(b1);  \draw (a)--(b2);  \draw (a)--(b3);
 \draw (b3)--(c3);  \draw (b2)--(c2);  \draw (b1)--(c1);
 \draw (c1)--(b2);
\end{tikzpicture} 
\end{center}
\caption{Profile, contours and poset for three rigid indecomposables.}
\label{fig:3contours}
\end{figure}
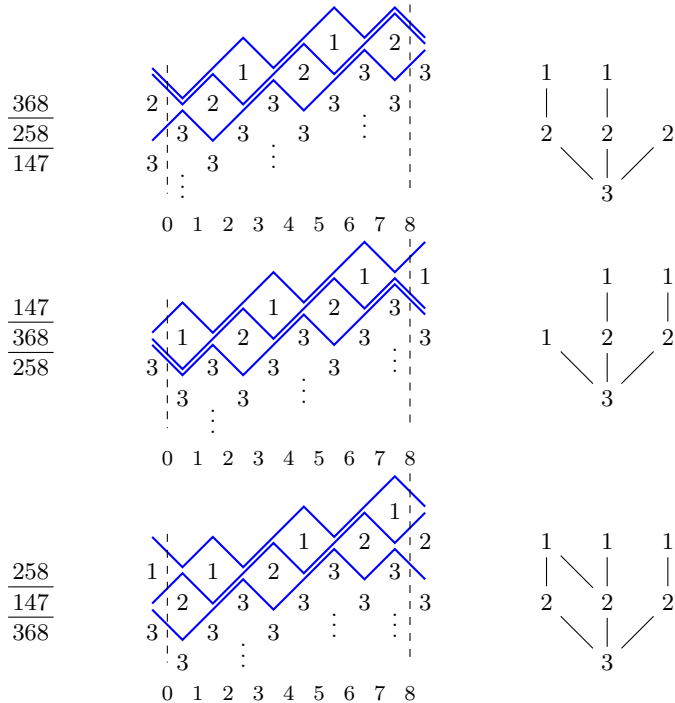

The profile of any graded module, and thus of any rigid indecomposable, 
has the following useful interpretation.

\begin{proposition}\label{prop:genfilt}
Every module $M$ in $\CM_\Gamma(\Rb)$ has a filtration with factors
which are rank 1 modules whose types are the $k$-subsets in its profile.
\end{proposition}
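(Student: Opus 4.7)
The plan is to realise $M$ as a finite subspace configuration inside $\Mlim$, cut it with a generic complete flag in $\Mlim$, and read off the successive subquotients as rank one modules corresponding to the contours of $M$. From the description around \eqref{eq:Mlim}, each $M_i$ sits naturally as a subspace of $\Mlim$, all structure maps of $M$ are the obvious inclusions, and the whole configuration is pulled back from a \emph{finite} quotient poset of $\Gamma\dual$. In particular, only finitely many distinct subspaces $M_i\sub\Mlim$ actually occur.

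Set $r=\rk M=\dim \Mlim$ and $\alpha_i=\dim M_i$. Because $\k$ is infinite and only finitely many genericity conditions need to be imposed, I would choose a complete flag
\[ 0 = F_0 \sub F_1 \sub \cdots \sub F_r = \Mlim \]
satisfying $\dim(M_i\cap F_d)=\min(d,\alpha_i)$ for every index $i$ and every $d$. Define $M^{(d)}\sub M$ pointwise by $(M^{(d)})_i := M_i\cap F_d$. Since the structure maps of $M$ are inclusions within $\Mlim$, these intersections commute with the action of $x$ and $y$, so each $M^{(d)}$ is a $\Gamma$-equivariant submodule, yielding a filtration
\[ 0 = M^{(0)} \sub M^{(1)} \sub \cdots \sub M^{(r)} = M. \]

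Now compute the successive dimensions: $(M^{(d)}/M^{(d-1)})_i$ has dimension $\min(d,\alpha_i)-\min(d-1,\alpha_i)$, which equals $1$ precisely when $\alpha_i\geq d$ and $0$ otherwise. Hence the support of $M^{(d)}/M^{(d-1)}$ is the order filter $S_d=\{i:\alpha_i\geq d\}$, whose lower boundary is by construction the $d$-th contour of $M$, of shape $I_d$. Within $S_d$ every structure map is an inclusion between one-dimensional spaces and hence an isomorphism, while outside $S_d$ it vanishes; this data determines $M^{(d)}/M^{(d-1)}$ up to grade shift and identifies it with a shift of the graded lift of $\rkone{I_d}$ from Definition~\ref{def:rkone}.

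The main point to watch is the genericity of $F_\bullet$ together with the compatibility of intersection with the $\Gamma$-equivariant structure. The former is automatic because only finitely many open conditions on $F_\bullet$ need to hold over an infinite field; the latter is automatic from the ``all arrows are inclusions'' description of $\CM_\Gamma(\Rb)$, which guarantees that intersection with $F_d$ produces a submodule. The final identification of each subquotient with a grade shift of $\rkone{I_d}$ then drops out from the shape of the $d$-th contour, exactly as required.
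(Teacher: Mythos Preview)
Your overall strategy coincides with the paper's: the proof there proceeds inductively by choosing a generic line $V_*\leq M_*$ and peeling it off as a strict submodule, and it is noted immediately afterward that the resulting filtration ``arises from a generic choice of full flag in $M_*$''. So doing the whole flag at once is fine in principle.

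However, your transversality condition is wrong. For a \emph{generic} $d$-dimensional subspace $F_d$ of the $r$-dimensional space $M_*$ and a fixed subspace $M_i$ of dimension $\alpha_i$, one has
\[
\dim(M_i\cap F_d)=\max(0,\,d+\alpha_i-r),
\]
not $\min(d,\alpha_i)$. The quantity $\min(d,\alpha_i)$ is the \emph{maximum} possible intersection dimension, attained only when $F_d\subseteq M_i$ or $M_i\subseteq F_d$, and no single flag can achieve it for all $i$ simultaneously in general. For instance, whenever the configuration contains two distinct one-dimensional pieces $M_i\neq M_j$ (as already in the running example of this section), no line $F_1$ contains both, so your condition fails at $d=1$.

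With the correct generic formula the rest of your argument goes through unchanged, except that $M^{(d)}/M^{(d-1)}$ is now supported on $S_{r-d+1}$ rather than on $S_d$; that is, the \emph{bottom} contour is peeled off first, exactly as in the paper's inductive step. Since all contour shapes still appear among the subquotients, the proposition follows.
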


\begin{proof}
We proceed by induction on $r=\rk M$. The case $r=1$ is straightforward,
since the dimension vector of a rank 1 module has one contour, whose shape coincides with
its type, i.e. the label $I$ in Definition~\ref{def:rkone}.

Let $\Mlim$ be the limiting vector space for $M$, as in \eqref{eq:Mlim}.
Note that any subspace $V_*\leq \Mlim$ determines a submodule $V$ of $M$,
by setting $V_i=V_*\cap M_i$, and that this submodule is `strict' in the sense
that $M/V$ is torsion-free and so still in $\CM_\Gamma(\Rb)$,
i.e. the inclusion $V\hookrightarrow M$ is a strict monomorphism in the exact structure
on $\CM_\Gamma(\Rb)$.
Indeed, the strict submodules are precisely those of this form.

If we choose $V_*$ to be 1-dimensional and generic, in the sense that it has
trivial intersection with every proper subspace $M_i\leq \Mlim$ in the representation, 
then $V$ will be a rank 1 submodule whose type is the shape of the bottom contour 
in the profile of $M$ and thus $V$ can be the bottom term in the filtration.
Furthermore, $M/V$ will have rank $r-1$ and a profile given by the rest of the profile
of $M$ and so the induction proceeds.
\end{proof}

\begin{corollary}\label{cor:genfilt}
Every rigid indecomposable module in $\CM_G(\Rh)$ has a filtration with factors
which are rank 1 modules whose types are the $k$-subsets in its profile.
\end{corollary}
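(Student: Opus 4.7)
The plan is to reduce the statement to the graded version already proved in Proposition~\ref{prop:genfilt}, by lifting $M$ to a $\Gamma$-equivariant module over $\Rb$ via Lemma~\ref{lem:gradable} and then transporting the resulting filtration back down along the covering functor $\compl$ of \eqref{eq:compl}.

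First, given a rigid indecomposable $M$ in $\CM_G(\Rh)$, I would invoke Lemma~\ref{lem:gradable} to obtain a graded lift $\Mtil$ in $\CM_\Gamma(\Rb)$, unique up to grade shift, with $\compl(\Mtil)\isom M$. By construction, the profile assigned to $M$ is the profile of $\Mtil$, which is well-defined since grade shifts act trivially on the list of contour shapes. Next, applying Proposition~\ref{prop:genfilt} to $\Mtil$ produces a filtration
\[
  0=\Mtil_0 \sub \Mtil_1 \sub \cdots \sub \Mtil_r = \Mtil
\]
in $\CM_\Gamma(\Rb)$ whose successive quotients $\Mtil_j/\Mtil_{j-1}$ are rank one graded modules of types prescribed by the $k$-subsets in the profile of $\Mtil$, and hence of $M$.

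Then I would push this filtration down by $\compl$. Since $\compl$ factors as the forgetful functor $\CM_\Gamma(\Rb)\to\CM_G(\Rb)$ followed by the comparison functor $c$ of \eqref{eq:comparison}, and the latter is base change along the flat extension $\Zb\hookrightarrow\Zh$, the functor $\compl$ is exact. Thus one obtains a filtration
\[
  0 \sub \compl(\Mtil_1) \sub \cdots \sub \compl(\Mtil_r) = M
\]
in $\CM_G(\Rh)$ whose successive quotients are the completions $\compl(\Mtil_j/\Mtil_{j-1})$. Finally, by the remark at the end of Definition~\ref{def:rkone}, the completion of the graded rank one module of type $I$ is precisely $\rkone{I}\in\CM_G(\Rh)$, so the factors have the required types.

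The key point — and really the only substantive one — is that the filtration in $\CM_\Gamma(\Rb)$ survives completion with its constituent types intact. Exactness of $\compl$ handles the survival of the filtration, while the explicit description in Definition~\ref{def:rkone} handles the matching of types. Neither of these presents a serious obstacle; the entire work of the corollary has already been done in Lemma~\ref{lem:gradable} and Proposition~\ref{prop:genfilt}, and the argument above simply assembles these ingredients.
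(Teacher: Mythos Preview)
Your proof is correct and is precisely the argument the paper intends: the corollary is stated without proof because it follows immediately from Lemma~\ref{lem:gradable} and Proposition~\ref{prop:genfilt} by transporting the filtration along the exact functor $\compl$, exactly as you describe.
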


We see from the proof above, that such a profile-matching filtration,
as constructed in Proposition~\ref{prop:genfilt}, is not canonical;
indeed it arises from a generic choice of full flag in $\Mlim$ 
and so we call it a \emph{generic filtration}.

%================================================================
\goodbreak\section{Examples}
\label{sec:examples} % Sec 7
%================================================================

For the rest of the paper we restrict to the case $A=\Ah$, so that we can talk about 
Auslander-Reiten (A-R) quivers and Grothendieck groups without ambiguity.

For $\grass36$, the A-R quiver of $\CM(A)$ is shown in Figure~\ref{fig:ARqG36}.
Note: the quiver is periodic, with the dotted lines identified by a straight translation;
all indecomposables are rigid and are denoted by their profiles;
arrows go left-to-right, but arrowheads and meshes are omitted.
\begin{figure}\centering
% === BEGIN PIC ===
\begin{tikzpicture}[scale=1.1]
\draw[dotted] (0.5,2.2)--(0.5,-2.2);
\foreach \v/\y/\lab in
  {a3/0/$\dfrac{135}{246}$}
  {\path (0,\y) node(\v) {\lab}; }
\foreach \v/\y/\lab in
  {a2/1/125, b2/-0.3/134, c2/-1.3/356}
  {\path (1,\y) node(\v) {\lab}; }
\foreach \t/\h in
  {a3/a2,a3/b2, a3/c2}
  {\draw (\t)--(\h);}
\foreach \v/\y/\lab in
  {a1/2/126, a3/0/135, b1/-1/234, c1/-2/456}
  {\path (2,\y) node(\v) {\lab}; }
\foreach \t/\h in
  {a1/a2,a2/a3,a3/b2, b2/b1, a3/c2, c2/c1}
  {\draw (\t)--(\h);}
\foreach \v/\y/\lab in
  {a2/1/136, b2/-0.3/235, c2/-1.3/145}
  {\path (3,\y) node(\v) {\lab}; }
\foreach \t/\h in
  {a1/a2,a2/a3,a3/b2, b2/b1, a3/c2, c2/c1}
  {\draw (\t)--(\h);}
\foreach \v/\y/\lab in
  {a3/0/$\dfrac{246}{135}$}
  {\path (4,\y) node(\v) {\lab}; }
\foreach \t/\h in
  {a3/a2, a3/b2, a3/c2}
  {\draw (\t)--(\h);}
\foreach \v/\y/\lab in
  {a2/1/245, b2/-0.3/146, c2/-1.3/236}
  {\path (5,\y) node(\v) {\lab}; }
\foreach \t/\h in
  {a2/a3, b2/a3, c2/a3}
  {\draw (\t)--(\h);}
\foreach \v/\y/\lab in
  {a1/2/345, a3/0/246, b1/-1/156, c1/-2/123}
  {\path (6,\y) node(\v) {\lab}; }
\foreach \t/\h in
  {a1/a2,a2/a3,a3/b2, b2/b1, a3/c2, c2/c1}
  {\draw (\t)--(\h);}
\foreach \v/\y/\lab in
  {a2/1/346, b2/-0.3/256, c2/-1.3/124}
  {\path (7,\y) node(\v) {\lab}; }
\foreach \t/\h in
  {a1/a2,a2/a3,a3/b2, b2/b1, a3/c2, c2/c1}
  {\draw (\t)--(\h);}
\foreach \v/\y/\lab in
  {a3/0/$\dfrac{135}{246}$}
  {\path (8,\y) node(\v) {\lab}; }
\foreach \t/\h in
  {a3/a2, a3/b2, a3/c2}
  {\draw (\t)--(\h);}
\foreach \v/\y/\lab in
  {a2/1/125, b2/-0.3/134, c2/-1.3/356}
  {\path (9,\y) node(\v) {\lab}; }
\foreach \t/\h in
  {a2/a3, b2/a3, c2/a3}
  {\draw (\t)--(\h);}
\draw[dotted] (8.5,2.2)--(8.5,-2.2);
\end{tikzpicture}
% === END PIC ===
\caption{The Auslander-Reiten quiver of $\CM(A)$ for $\grass36$.}
\label{fig:ARqG36}
\end{figure}

To compute such an example, we may compute the A-R quiver of $\Sub Q_k$ and then
lift the indecomposable modules to  $\CM(A)$ using Theorem~\ref{thm:main}.
What remains is to locate the new projective $P_n$, 
i.e. the rank 1 module labelled $123$ in this case.
For this, we observe that there is an irreducible map from its radical $236 \to 123$
and then that $\tau^{-1}(236) = \Omega(236) = 124$.
We can then compute the almost split sequence
\begin{equation}\label{eq:ASS}
  236 \to 246 \oplus 123 \to 124
\end{equation}
which confirms the location of 123  in Figure~\ref{fig:ARqG36}.
Note that $\trun(124)$ is the simple module $S_k$, and it will
always be the case that the lift of $S_k$ is the codomain of 
the (unique) irreducible
map from $P_n$.

Alternatively, we may use the covering \eqref{eq:compl} to compute the A-R quiver directly,
as in \cite[Thm~13.17]{Sim} or \cite{RW}. 
Indeed, the next two examples are simliar to \cite[Examples 13.28 \& 13.18]{Sim}, 
where the same poset covers a different tiled order.

For $\grass37$, a piece of the A-R quiver  is shown in Figure~\ref{fig:ARqG37}.
The quiver may be continued to the right by adding 3 to all labels (mod 7) until
it becomes periodic under a glide reflection.
For $\grass38$ a piece of the A-R quiver, containing 2 of the 8 projectives, is shown in Figure~\ref{fig:ARqG38}.
The quiver may be continued to the right by adding 3 to all labels (mod 8) until
it becomes periodic under a translation.

\begin{figure}\centering
% === BEGIN PIC ===
\begin{tikzpicture}[scale=1.1]
\foreach \ver/\pos/\lab in
  {a2/2/356, a4/0/$\dfrac{135}{247}$, , c2/-2/267}
  {\path (0,\pos) node(\ver) {\lab}; }
\foreach \ver/\pos/\lab in
  {a1/3/456, a3/1/357, b2/0/125, c3/-1/$\dfrac{136}{247}$}
  {\path (1,\pos) node(\ver) {\lab}; }
\foreach \x/\y in
  {a1/a2,a2/a3,a3/a4,b2/a4,a4/c3,c3/c2}
  {\draw (\x)--(\y);}
\foreach \ver/\pos/\lab in
  {a2/2/457, a4/0/$\dfrac{136}{257}$, c2/-2/134}
  {\path (2,\pos) node(\ver) {\lab}; }
\foreach \x/\y in
  {a1/a2,a2/a3,a3/a4,b2/a4,a4/c3,c3/c2}
  {\draw (\x)--(\y);}
\foreach \ver/\pos/\lab in
  {a3/1/$\dfrac{146}{257}$, b2/0/367, c3/-1/135,c1/-3/234}
  {\path (3,\pos) node(\ver) {\lab}; }
\foreach \x/\y in
  {a2/a3,a3/a4,b2/a4,a4/c3,c3/c2,c2/c1}
  {\draw (\x)--(\y);}
\foreach \ver/\pos/\lab in
  {a2/2/126, a4/0/$\dfrac{146}{357}$, , c2/-2/235}
  {\path (4,\pos) node(\ver) {\lab}; }
\foreach \x/\y in
  {a2/a3,a3/a4,b2/a4,a4/c3,c3/c2,c2/c1}
  {\draw (\x)--(\y);}
\foreach \ver/\pos/\lab in
  {a1/3/127, a3/1/136, b2/0/145, c3/-1/$\dfrac{246}{357}$}
  {\path (5,\pos) node(\ver) {\lab}; }
\foreach \x/\y in
  {a1/a2,a2/a3,a3/a4,b2/a4,a4/c3,c3/c2}
  {\draw (\x)--(\y);}
\foreach \ver/\pos/\lab in
  {a2/2/137, a4/0/$\dfrac{246}{135}$, c2/-2/467}
  {\path (6,\pos) node(\ver) {\lab}; }
\foreach \x/\y in
  {a1/a2,a2/a3,a3/a4,b2/a4,a4/c3,c3/c2}
  {\draw (\x)--(\y);}
\foreach \ver/\pos/\lab in
  {a3/1/$\dfrac{247}{135}$, b2/0/236, c3/-1/146, c1/-3/567}
  {\path (7,\pos) node(\ver) {\lab}; }
\foreach \x/\y in
  {a2/a3,a3/a4,b2/a4,a4/c3,c3/c2,c2/c1}
  {\draw (\x)--(\y);}
\foreach \ver/\pos/\lab in
  {a2/2/245, a4/0/$\dfrac{247}{136}$, , c2/-2/156}
  {\path (8,\pos) node(\ver) {\lab}; }
\foreach \x/\y in
  {a2/a3,a3/a4,b2/a4,a4/c3,c3/c2,c2/c1}
  {\draw (\x)--(\y);}
\foreach \ver/\pos/\lab in
  {a1/3/345, a3/1/246, b2/0/147, c3/-1/$\dfrac{257}{136}$}
  {\path (9,\pos) node(\ver) {\lab}; }
\foreach \x/\y in
  {a1/a2,a2/a3,a3/a4,b2/a4,a4/c3,c3/c2}
  {\draw (\x)--(\y);}
\end{tikzpicture}
% === END PIC ===
\caption{Part of the Auslander-Reiten quiver of $\CM(A)$ for $\grass37$.}
\label{fig:ARqG37}
\end{figure}
 
\begin{figure}\centering
% === BEGIN PIC ===
\begin{tikzpicture}[scale=1.1]
\foreach \ver/\pos/\lab in
  {a2/4/138, a4/2/$\dfrac{247}{135}$, a6/0/$\thfrac{257}{146}{368}$, c2/-2/478}
  {\path (0,\pos) node(\ver) {\lab};}
\foreach \ver/\pos/\lab in
  {a3/3/$\dfrac{248}{135}$, a5/1/$\dfrac{247}{136}$, b3/0/$\dfrac{257}{368}$, c4/-1/$\dfrac{157}{468}$}
  {\path (1,\pos) node(\ver) {\lab};}
\foreach \x/\y in
  {a2/a3,a3/a4,a4/a5,a5/a6,b3/a6,a6/c4,c4/c2}
  {\draw (\x)--(\y);}
\foreach \ver/\pos/\lab in
  {a2/4/245, a4/2/$\dfrac{248}{136}$, a6/0/$\thfrac{257}{147}{368}$, c2/-2/156}
  {\path (2,\pos) node(\ver) {\lab};}
\foreach \x/\y in
  {a2/a3,a3/a4,a4/a5,a5/a6,b3/a6,a6/c4,c4/c2}
  {\draw (\x)--(\y);}
\foreach \ver/\pos/\lab in
  {a1/5/345, a3/3/246, a5/1/$\thfrac{258}{147}{368}$, b3/0/147, c4/-1/$\dfrac{257}{136}$}
  {\path (3,\pos) node(\ver) {\lab};}
\foreach \x/\y in
  {a1/a2,a2/a3,a3/a4,a4/a5,a5/a6,b3/a6,a6/c4,c4/c2}
  {\draw (\x)--(\y);}
\foreach \ver/\pos/\lab in
  {a2/4/346, a4/2/$\dfrac{257}{468}$, a6/0/$\thfrac{258}{147}{136}$, c2/-2/237}
  {\path (4,\pos) node(\ver) {\lab};}
\foreach \x/\y in
  {a1/a2,a2/a3,a3/a4,a4/a5,a5/a6,b3/a6,a6/c4,c4/c2}
  {\draw (\x)--(\y);}
\foreach \ver/\pos/\lab in
  {a3/3/$\dfrac{357}{468}$, a5/1/$\dfrac{257}{146}$, b3/0/$\dfrac{258}{136}$, c4/-1/$\dfrac{248}{137}$}
  {\path (5,\pos) node(\ver) {\lab};}
\foreach \x/\y in
  {a2/a3,a3/a4,a4/a5,a5/a6,b3/a6,a6/c4,c4/c2}
  {\draw (\x)--(\y);}
\foreach \ver/\pos/\lab in
  {a2/4/578, a4/2/$\dfrac{357}{146}$, a6/0/$\thfrac{258}{247}{136}$, c2/-2/148}
  {\path (6,\pos) node(\ver) {\lab};}
\foreach \x/\y in
  {a2/a3,a3/a4,a4/a5,a5/a6,b3/a6,a6/c4,c4/c2}
  {\draw (\x)--(\y);}
\foreach \ver/\pos/\lab in
  {a1/5/678, a3/3/157, a5/1/$\thfrac{358}{247}{136}$, b3/0/247, c4/-1/$\dfrac{258}{146}$}
  {\path (7,\pos) node(\ver) {\lab};}
\foreach \x/\y in
  {a1/a2,a2/a3,a3/a4,a4/a5,a5/a6,b3/a6,a6/c4,c4/c2}
  {\draw (\x)--(\y);}
\foreach \ver/\pos/\lab in
  {a2/4/167, a4/2/$\dfrac{258}{137}$, a6/0/$\thfrac{358}{247}{146}$, c2/-2/256}
  {\path (8,\pos) node(\ver) {\lab};}
\foreach \x/\y in
  {a1/a2,a2/a3,a3/a4,a4/a5,a5/a6,b3/a6,a6/c4,c4/c2}
  {\draw (\x)--(\y);}
\foreach \ver/\pos/\lab in
  {a3/3/$\dfrac{268}{137}$, a5/1/$\dfrac{258}{147}$, b3/0/$\dfrac{358}{146}$, c4/-1/$\dfrac{357}{246}$}
  {\path (9,\pos) node(\ver) {\lab};}
\foreach \x/\y in
  {a2/a3,a3/a4,a4/a5,a5/a6,b3/a6,a6/c4,c4/c2}
  {\draw (\x)--(\y);}
\end{tikzpicture}
% === END PIC ===
\caption{Part of the Auslander-Reiten quiver of $\CM(A)$ for $\grass38$.}
\label{fig:ARqG38}
\end{figure}

\begin{remark}\label{rem:cycling}
Notice that, in Figure~\ref{fig:ARqG38}, every label occurs in each rank 3 module, with one label occurring twice.
For example, by suitably shifting the labels on the three rank 3 modules appearing in the first four columns,
we see that the three modules from Figure~\ref{fig:3contours} are precisely those with the label `8' occurring twice:
\begin{equation}\label{eq:two8s}
\thfrac{147}{368}{258} 
\qquad
\thfrac{368}{258}{147} 
\qquad
\thfrac{258}{147}{368}
\end{equation}
It may be confirmed that all rank 2 and rank 3 modules above occur in
pairs or triples, respectively, whose profiles contain the same contour shapes,
but with the layers cyclically reordered.

Thus these modules have generic filtrations with the same rank 1 factors
and hence must have the same class in the Grothendieck group of $\CM(A)$.
This empirical phenomenon of \emph{cyclic reordering of factors} 
provides a natural explanation of the occurrence of 
multiple non-isomorphic indecomposables in the same class,
but we don't currently understand why such reordering is \emph{a priori} possible.

Note that this cyclic reordering of factors is quite different from the symmetry
of $\CM(A)$ that arises from the automorphism of $A$ corresponding to the
cyclic symmetry of the double quiver $Q(C)$ in Figure~\ref{fig:mckayQ}.
\end{remark}

%================================================================
\goodbreak\section{The Grothendieck group}
\label{sec:grotgrp}  % Sec 8
%================================================================

\newcommand{\x}{\mathbf{x}}

We now study the Grothendieck group of $\CM(A)$ and show how it may be identified with
the root lattice $\rootlat(J_{k,n})$ and
with the sublattice $\ZZ^n(k)\subseteq \ZZ^n$ spanned by the $\GL_n(\k)$ weights of
the homogeneous functions in $\GCA{k}{n}$.
Recall that these two lattices were already identified with each other in Section~\ref{sec:Jkn}.

First observe that the short exact sequence \eqref{eq:ses.trunM},
together with the fact (Proposition~\ref{prp:tosubQk}) that both $\trun M$ and
$\rk(M)=\dim V$ are additive on short exact sequences of CM-modules,
provides us with maps 
\begin{equation}\label{eq:Grot}
\begin{aligned}
 \Grot(\CM(A)) &\to \Grot(B) \oplus\ZZ \to \Grot(A) \\
  [M]  &\mapsto  [\trun M] + \rk(M) \mapsto [\trun M] + \rk(M)[P_n]
\end{aligned}
\end{equation}
where $\Grot(B)$ and $\Grot(A)$ are the Grothendieck groups
of finitely generated modules, which in the case of $B$ is the 
same as finite dimensional modules.
Thus $\Grot(B)$ has a basis 
$[S_1],\dots,[S_{n-1}]$ given by its simple modules.
Note also that $\Grot(B)=\Grot(\Pi)$
(cf. Remark~\ref{rem:subQk}).
 
However, a standard argument (cf. \cite[Lemma~13.2]{Yo}) shows that
the map $\Grot(\CM(A)) \to \Grot(A)$ is an isomorphism,
whose inverse is $[M]\mapsto [P]-[C]$ where $P\to M$ is any 
projective cover, whose kernel $C$ is Cohen-Macaulay
in our case.

From considering just the rank 1 modules, we can see that
the first map in \eqref{eq:Grot} is surjective and thus deduce that 
\begin{equation*}
  \Grot(\CM(A)) \isom \Grot(A) \isom \Grot(B)\oplus\ZZ
\end{equation*}
and thus that $\Grot(A)$ has a basis $[S_1],\dots,[S_{n-1}],[P_n]$.
We will then make the identification 
\begin{equation}\label{eq:ident}
  \Grot(A)\isom \ZZ^n(k)\isom \rootlat(J_{k,n})
\end{equation}
by identifying this basis with the basis $\alpha_1,\ldots,\alpha_{n-1},\beta_{\cycint{n}}$,
as described in Section~\ref{sec:Jkn}. 
By Proposition~\ref{prp:tosubQk}, the $[P_n]$ coefficient of $[M]$ in this basis is $\rk M$.
In particular, if $\Ntil\in\CM(A)$ is the minimal lift of $N\in\Sub Q_k$ provided by Theorem~\ref{thm:main},
then $\rk\Ntil=\dim\soc N$ and $[\Ntil]$ is identified with the 
enhanced dimension vector of $N$, as in Observation~\ref{obs:subQk}.

Note that, under the identification \eqref{eq:ident}, we also have
$[S_n]=e_1-e_n$, which we could call $\alpha_n$ by comparison with \eqref{eq:alpha}. 
This holds because $\sum_{i=1}^n [S_i] = 0$, 
as this is the class of $M/tM$ for any rank 1 CM-module $M$ and $M \isom tM$.

\begin{remark}\label{rem:class}
\New{Following Remark~\ref{rem:young},}
for any rank 1 CM-module $\rkone{I}$ (Def.~\ref{def:rkone}),
we may see that $\trun\rkone{I}=N_I$, so $[\trun\rkone{I}]= \beta_I - \beta_{\cycint{n}}$ and hence,
by \eqref{eq:Grot},
\begin{equation}\label{eq:Lclass}
  [\rkone{I}] = \beta_I.
\end{equation}
Thus the class of any $M\in\CM(A)$ is determined just by adding up
the labels that appear in its profile, e.g.
\[
  \left[ \dfrac{247}{135} \right] = (e_2+e_4+e_7) + (e_1+e_3+e_5) .
\]
\end{remark}

Although the observations in Section~\ref{sec:Jkn} give a strong indication that
the root lattice $\rootlat(J_{k,n})$ plays an important role in understanding $\CM(A)$,
we do not currently have any interpretation (homological or otherwise) of the quadratic form $q$.
We do however now have the following refinement of those observations.

\begin{observation}
When $\CM(A)$ has finite type (so all indecomposable modules are rigid), the classes in $\Grot(A)$ 
of the indecomposable rank $d$ modules are precisely the roots of degree $d$.
Furthermore, if $M$ is an indecomposable rank $d$ CM-module
whose generic filtration has rank 1 factors
$M_1 \mid M_2 \mid \cdots \mid M_d$,
then there is another indecomposable CM-module $M'$ of rank $d$ and
generic filtration with rank 1 factors
$M_d \mid M_1 \mid \cdots \mid M_{d-1}$,
which thus has the same class in $\Grot(A)$.
This cyclic reordering of generic factors produces $d$ non-isomorphic 
indecomposable CM-modules of rank $d$ and these are precisely
all such modules of class $[M]$.
\end{observation}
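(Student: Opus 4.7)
The plan is to combine the main theorem (Theorem~\ref{thm:main}), the enhanced dimension vector description (Observation~\ref{obs:subQk}), and the profile interpretation of generic filtrations from Section~\ref{sec:genfilt}, all mediated by the identification $\Grot(A)\cong\rootlat(J_{k,n})$ just established. For the first claim, I would note that $\rk M$ coincides with the degree of $[M]$, since $\rk M$ is the $[P_n]$-coefficient in the basis $[S_1],\dots,[S_{n-1}],[P_n]$ and this corresponds to the coefficient of $\beta_{\cycint{n}}$, which is exactly the grading $\gr$ on $\rootlat(J_{k,n})$. By Remark~\ref{rem:main}, the functor $\trun$ gives a bijection between indecomposable objects of $\CM(A)$ other than $P_n$ and indecomposable objects of $\Sub Q_k$; under this bijection, $[M]\in\Grot(A)$ is identified (via \eqref{eq:Lclass} together with additivity of rank on short exact sequences) with the enhanced dimension vector of $\trun M$. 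Observation~\ref{obs:subQk} then tells us that, in finite type, these are exactly the positive-degree roots of $J_{k,n}$ apart from the simple root at $v_n$, and restoring $P_n$ (with $[P_n]=\beta_{\cycint{n}}$) supplies the missing simple root.

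The multiplicity assertion follows from the same Observation~\ref{obs:subQk}, which states that each root of degree $d$ occurs with multiplicity $d$ amongst the enhanced dimension vectors, and that multiplicity is preserved under the bijection of Remark~\ref{rem:main}. Thus each class of degree $r$ is represented by exactly $r$ non-isomorphic indecomposables in $\CM(A)$.

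For the cyclic reordering, I would pass to the graded category $\CM_\Gamma(\Rb)$, using Lemma~\ref{lem:gradable} to lift each rigid indecomposable $M$ uniquely (up to shift) to a graded module $\Mtil$ with a well-defined profile $(I_1,\ldots,I_r)$. By Proposition~\ref{prop:genfilt} this profile is precisely the sequence of contour shapes in $\Mtil$, so the cyclic rotation $(I_r,I_1,\ldots,I_{r-1})$ is the natural candidate for another graded module $\Mtil'$. Granting that this rotated sequence is realised by an indecomposable graded module whose completion $M'\in\CM(A)$ is a rigid indecomposable, we get $[M']=\sum_j\beta_{I_j}=[M]$ by \eqref{eq:Lclass}, and distinct rotations yield pairwise non-isomorphic modules since their profiles differ. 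The multiplicity count from the previous paragraph bounds the total number of indecomposables of this class by~$r$, so these $r$ rotations must exhaust them.

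The main obstacle is the realisation step just glossed over: to show that a cyclic rotation of a close-packed contour configuration is again close-packed and is actually realised by a graded CM-module. As the authors acknowledge in Remark~\ref{rem:cycling}, no conceptual explanation is currently available, so I would fall back on case-by-case verification in the finite-type situations $k=2$ or $k=3$, $n\in\{6,7,8\}$. The Auslander-Reiten quivers displayed in Section~\ref{sec:examples} reduce this to a finite check: for each rank-$r$ indecomposable $M$ one reads off its profile, performs the cyclic rotation, and locates a matching profile elsewhere in the quiver. Combined with the multiplicity count, this completes the proof.
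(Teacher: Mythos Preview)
Your proposal is correct and matches the paper's own treatment: this statement is an \emph{Observation}, and the paper offers no proof beyond the explicit Auslander--Reiten quiver computations in Section~\ref{sec:examples} together with Remark~\ref{rem:cycling}, where it is openly said that the cyclic reordering phenomenon is empirical and not understood \emph{a priori}. Your reduction of the first two claims to Observation~\ref{obs:subQk} via the identification $\Grot(A)\cong\rootlat(J_{k,n})$ and Remark~\ref{rem:main} is sound and is a small clarification beyond what the paper spells out; your acknowledgement that the reordering step must be verified case-by-case in the finite-type examples is exactly the paper's position. One minor point worth making explicit in your write-up: the assertion that the $r$ cyclic rotations of the profile yield $r$ \emph{distinct} profiles (hence pairwise non-isomorphic modules) is itself part of the case-by-case check, not something you have argued in general.
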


It is tempting to conjecture that the above remains true in infinite type
if we replace ``indecomposable'' by ``rigid indecomposable'' and ``root'' by ``real root''.
Unfortunately, this is not the case.
For example, when $n=8$ and $k=4$, Figure~\ref{fig:counterexample} shows the 
profile, contours and poset for the graded dimension vector of
(the lift of) a rigid indecomposable module of class
$e_1+\cdots +e_8$, which has $q=0$.
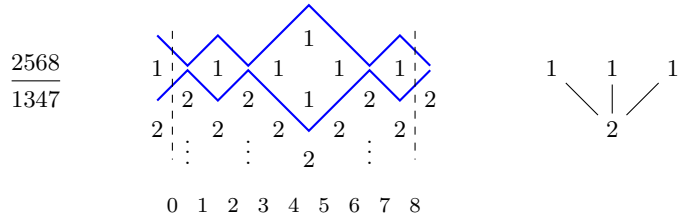
\begin{figure}\centering
\begin{tikzpicture}[scale=0.4]
\newcommand{\offset}{(-5,1)}
\draw \offset++(0,1.2) node {\sm 2568};
\draw \offset++(-0.8,0.6)--++(1.6,0);
\draw \offset++(0,0) node {\sm 1347};
\foreach \x/\y/\dim in
  {-1/0/2, 0/1/2, 1/0/2, 2/1/2, 3/0/2, 4/-1/2, 5/0/2, 6/1/2, 7/0/2, 8/1/2, 
   -1/2/1, 1/2/1, 3/2/1, 4/3/1, 4/1/1, 5/2/1, 7/2/1}
  {\path[black] (\x,\y) node {\sm \dim};} 
\draw [blue,thick] (-1,3.1)
 --++(1,-1)--++(1,1)--++(1,-1)--++(1,1)--++(1,1)--++(1,-1)--++(1,-1)--++(1,1)--++(1,-1); % 2568
\draw [blue,thick] (-1,0.95)
 --++(1,1)--++(1,-1)--++(1,1)--++(1,-1)--++(1,-1)--++(1,1)--++(1,1)--++(1,-1)--++(1,1); % 1347
\draw (0,-0.5) node {$\vdots$};
\draw (2,-0.5) node {$\vdots$};
\draw (6,-0.5) node {$\vdots$};
\draw[dashed] (-0.5,3.25)--(-0.5,-1);
\draw[dashed] (7.5,3.25)--(7.5,-1);
\foreach \j in {0,...,8}
 { \path (-0.5+\j,-2.5) node [black] {$\j$};}
 \renewcommand{\offset}{(14,2)} 
 \path \offset++(0,-2) node (a) {\sm 2}; 
 \path \offset++(-2,0) node (b1){\sm1};
 \path \offset++(0,0)  node (b2) {\sm 1};
 \path \offset++(2,0)  node (b3) {\sm 1};
 \draw (a)--(b1);  \draw (a)--(b2);  \draw (a)--(b3);
\end{tikzpicture} 
\caption{A rigid indecomposable with $q=0$.}
\label{fig:counterexample}
\end{figure}

For this module, the generic factors can not be reordered to obtain another rigid indecomposable.
However, the empirical evidence is at least consistent with the possibility that the classes
of all rigid indecomposable modules are roots and, in particular, have $q\leq 2$.
Furthermore, in all known examples where the class is a real root, the generic factors
can be cyclically reordered.

%================================================================
\goodbreak\section{Categorification}
\label{sec:conc}  % Sec 9
%================================================================

We conclude by explaining more precisely how $\CM(A)$ categorifies the cluster algebra $\GCA{k}{n}$. 
As usual, we rely heavily on Geiss-Leclerc-Schroer's inhomogeneous categorification of
$\k[\N]$ by $\Sub Q_k$.
 
As a special case of a more general result \cite[Theorem~3]{GLS06} 
(see also \cite[Prop~9.1]{GLS08})
we know that, for the dominant weight $\lambda=d\omega$ as in \eqref{eq:repsum}, 
if we set $Q_\lambda= Q_k^{\oplus d}$, 
then the cluster characters of the submodules of $Q_\lambda$ span a submodule of 
$\k[\N]$ which can be identified with the irreducible representation
of $\SL_n(\k)$ of highest weight $\lambda$, that is, 
\begin{equation}\label{eq:phispan}
  \irrep{\lambda} \isom \langle \cluschar{N} : N \leq Q_\lambda \rangle.
\end{equation}
Indeed, as in \cite[\S2.5]{GLS08}, this identification is given by
\begin{equation}\label{eq:projection}
 \dehomog\colon \irrep{d\omega} \hookrightarrow \GCA{k}{n} \to \k[\N], 
\end{equation}
where the first map is the inclusion of the summand of degree $d$, as in \eqref{eq:repsum}, 
and the second is the quotient which sets $\Pluck{\cycint{n}} = 1$, as in \eqref{eq:kN}.
Note that $\bigl(\Pluck{\cycint{n}}\bigr)^d\in \irrep{d\omega}$ is the highest weight vector.
The image of $\dehomog$ consists of polynomials $\phi\in\k[\N]$
with $\deg\phi \leq d$; see e.g. \cite[Lemma~2.4]{GLS08}.
Thus, for any such $\phi$, we will denote by $\homog{\phi}{d}$ 
the unique homogenous function in $\irrep{d\omega}$ that maps to $\phi$ under $\dehomog$.

\begin{definition}\label{def:homcc}
For any $M\in \CM(A)$, its \emph{homogeneous cluster character} is
\[
 \homcc{M} = \homog{\cluschar{\trun M}}{\rk M}.
\]
\end{definition}
Thus $\deg\homcc{M}=\rk M$ and 
the definition is sound because, by Theorem~\ref{thm:main} and \cite[\S10.1]{GLS08}, we know
$\rk M \geq \dim\soc \trun M = \deg \cluschar{\trun M}$.

\begin{lemma}
The map $M\mapsto \homcc{M}$ is a cluster character, in the sense that
\[
  (a) \qquad \homcc{M_1\oplus M_2}=\homcc{M_1}\homcc{M_2}
\]
and, if $\dim\Ext^1(X,Y)=1$ (and so also $\dim\Ext^1(Y,X)=1$) and 
$Y\to A\to X$ and $X\to B\to Y$ are the corresponding non-split short exact sequences,
then
 \[
  (b) \qquad \homcc{X}\homcc{Y}=\homcc{A} + \homcc{B}.
\]  
Furthermore, $\homcc{M}$ depends on $M$ only up to isomorphism.
\end{lemma}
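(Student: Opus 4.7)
The plan is to reduce both identities to the analogous properties of the Geiss--Leclerc--Schr\"oer cluster character $\cluschar{N}$ on $\Sub Q_k$, and then transfer them to $\GCA{k}{n}$ via the homogenisation map $\homog{-}{d}$. First I would establish two compatibility properties of $\homog{-}{d}$ that will be used repeatedly. Since multiplication in $\GCA{k}{n}$ respects the grading ($V_{d_1\omega}\cdot V_{d_2\omega}\sub V_{(d_1+d_2)\omega}$) and $\dehomog$ is an algebra map whose restriction to each $V_{d\omega}$ is injective (by \cite[Lemma~2.4]{GLS08}), uniqueness of the homogeneous lift gives
\[
  \homog{\phi_1\phi_2}{d_1+d_2}=\homog{\phi_1}{d_1}\cdot \homog{\phi_2}{d_2},\qquad \homog{\phi_1+\phi_2}{d}=\homog{\phi_1}{d}+\homog{\phi_2}{d},
\]
whenever the degrees match the degrees of the inputs.

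For (a), the functor $\trun$ commutes with direct sums, $\rk$ is additive, and $\cluschar{N_1\oplus N_2}=\cluschar{N_1}\cluschar{N_2}$ is a known property of GLS's cluster character. Combining these with the multiplicativity of $\homog{-}{d}$ yields (a) immediately from Definition~\ref{def:homcc}.

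For (b), I would apply the exact functor $\trun$ (Proposition~\ref{prp:tosubQk}) to the given extensions to obtain short exact sequences
\[
 0\to\trun Y\to\trun A\to\trun X\to 0,\qquad 0\to\trun X\to\trun B\to\trun Y\to 0
\]
in $\Sub Q_k$. By Corollary~\ref{cor:stable}, $\trun$ induces an isomorphism $\Ext_A^1(X,Y)\isom \Ext_B^1(\trun X,\trun Y)$, so the latter is still one-dimensional and the two resulting sequences represent its two non-split extension classes. Invoking GLS's cluster character multiplication formula (\cite[Thm~3]{GLS06}; cf. \cite[\S10]{GLS08}) gives $\cluschar{\trun X}\cluschar{\trun Y}=\cluschar{\trun A}+\cluschar{\trun B}$ in $\k[\N]$. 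Additivity of $\rk$ on short exact sequences yields $\rk A=\rk X+\rk Y=\rk B$, so both sides of the target identity are homogeneous of the same degree $\rk A$. Applying $\homog{-}{\rk A}$ to the GLS identity and using the two compatibilities established above concludes (b).

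The main obstacle is aligning the degrees so that the GLS identity lifts cleanly; this rests on the rank equality $\rk A=\rk B$, together with the injectivity of $\dehomog$ on each $V_{d\omega}$. One small sanity check worth including when executing the plan is that neither $X$ nor $Y$ can be (or contain) $P_n$ as a summand, since $P_n$ is projective-injective in $\CM(A)$ and the hypothesis $\dim\Ext^1(X,Y)=1$ would then be violated; hence $\trun X$ and $\trun Y$ are genuine non-zero objects of $\Sub Q_k$ to which GLS's formula applies.
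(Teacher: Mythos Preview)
Your proposal is correct and follows essentially the same route as the paper: reduce both identities to the known cluster-character properties of $\cluschar{-}$ on $\Sub Q_k$ via the exact functor $\trun$ and the $\Ext^1$-isomorphism of Corollary~\ref{cor:stable}, then lift to $\GCA{k}{n}$ using additivity of rank to match degrees. Your treatment is more explicit than the paper's (which compresses the argument to two sentences and cites \cite[Lemma~7.3]{GLS05} and \cite[Theorem~2]{GLS07b} for the multiplication formula), particularly in spelling out the multiplicativity and additivity of $\homog{-}{d}$ and in using Corollary~\ref{cor:stable} to verify that the non-split sequences remain non-split after applying $\trun$.
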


\begin{proof}
Observe first that the homogeneous functions on both sides of the equalities (a) and (b) 
have the same degree, because $\rk M$ is additive on direct sums and short exact sequences.
It is then sufficient to note that $\trun\colon \CM(A) \to \Sub Q_k$ is an exact functor
and the equalities hold when $\homcc{M}$ is replaced by $\cluschar{\trun M}$,
that is, $N\mapsto \cluschar{N}$ is a cluster character,
by \cite[Lemma~7.3]{GLS05} and \cite[Theorem~2]{GLS07b}.
This also yields the final claim,
because $\cluschar{\trun M}$ depends on $\trun M$ only up to isomorphism.
\end{proof}

We may first calculate that $\homcc{P_n}=\homog{\cluschar{0}}{1}= \Pluck{\cycint{n}}$,
as $\cluschar{0}=1$.
More generally, if $r$ is the number of summands of $M$ isomorphic to $P_n$, then
\begin{equation}\label{eq:psiphitil}
 \homcc{M} = \cluschartil{\trun M} (\Pluck{\cycint{n}})^r 
\end{equation}
where $\cluschartil{N}= \homog{\cluschar{N}}{\deg \cluschar{N}}$ 
is the minimally homogenised cluster character of $N$,
as in \cite[\S10.1]{GLS08}.
In particular, if $M$ is the minimal lift of $N$, in the sense of Theorem~\ref{thm:main},
then $\homcc{M} = \cluschartil{N}$.

\New{
Following Remark~\ref{rem:class}, for the submodules $N_I\leq Q_k$
we know that, except in the case $I=\cycint{n}$, the minimal lift of $N_I$ is $L_I$
and $\cluschartil{N_I}=\Pluck{I}$. 
On the other hand, $L_{\cycint{n}}=P_n$ and so,
for all Pl\"ucker labels $I$ without exception, we have
\begin{equation}\label{eq:psi}
\homcc{L_I} = \Pluck{I}.
\end{equation}
}

\goodbreak
\begin{proposition}\label{prop:GLweight}
The $\GL_n(\k)$ weight of $\homcc{M}$ is $[M]\in\Grot(A)\isom \ZZ^n(k)$. 
\end{proposition}

\begin{proof}
We know from \cite[Lemma~5.4]{GLS07a} that, for any $N \leq Q_{d\omega}$, 
the $\SL_n(\k)$ weight of $\homog{\cluschar{N}}{d}$ is $d\omega + [N]$,
where the class $[N]\in\Grot(B)$,
i.e. the dimension vector of $N$, 
is interpreted in the root lattice of $\SL_n(\k)$ 
following \eqref{eq:ident}; that is, $[N] = \sum_i (\dim N_i) \alpha_i$,
where the $\alpha_i$ are the negative simple roots, as in \eqref{eq:alpha}.
Hence the $\GL_n(\k)$ weight of $\homog{\cluschar{N}}{d}$
is also $d\omega + [N]$, provided we interpret $\omega$ as
the appropriate $\GL_n(\k)$ weight,
i.e. set $\omega = \beta_{\cycint{n}} \in \ZZ^n(k)$,
as in \eqref{eq:beta}.

By \eqref{eq:Grot}, we have $[M] = [\trun M] + \rk(M)[P_n]$, and so the claim follows
because $[P_n]= \beta_{\cycint{n}}$ under the identification $\Grot(A)\isom \ZZ^n(k)$,
as in \eqref{eq:ident}.
\end{proof}

\begin{proposition}\label{prop:NtoM}
For any submodule $N\leq Q_k^{\oplus d}$, there is (up to isomorphism) a unique $M\in\CM(A)$ such that
$\rk M =d$ and $\trun M\isom N$.
Conversely, if $M\in\CM(A)$ has $\rk M =d$, then $\trun M$ is isomorphic to a submodule of $Q_k^{\oplus d}$.
Thus 
\begin{equation}\label{eq:psispan}
  \irrep{d\omega} = \langle \homcc{M} : M \in\CM(A),\; \rk M=d \rangle.
\end{equation}
\end{proposition}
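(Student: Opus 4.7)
The plan is to prove the two structural statements first, then derive the spanning conclusion by transporting Geiss-Leclerc-Schr\"oer's result \eqref{eq:phispan} across the correspondence $M \mapsto \trun M$.

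First I would handle the existence half of (1). Given $N\leq Q_k^{\oplus d}$, Theorem~\ref{thm:main} furnishes a minimal lift $M_0\in\CM(A)$ with $\trun M_0\isom N$ and $\rk M_0 = \dim\soc N$. The key numerical point is that $\soc N$ embeds in $\soc(Q_k^{\oplus d})\isom S_k^{\oplus d}$, so $\dim\soc N\leq d$. Setting $M=M_0\oplus P_n^{\oplus(d-\dim\soc N)}$ then gives $\rk M=d$ (since $\rk P_n = 1$) and $\trun M\isom N$ (since $\trun P_n=0$). For uniqueness, any $M$ with $\rk M=d$ and $\trun M\isom N$ splits as $M'\oplus P_n^{\oplus r}$ with $M'$ having no summand $P_n$ (using Krull-Schmidt in the complete case). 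Then $M'$ is a minimal lift of $N$, hence $M'\isom M_0$ by the uniqueness clause of Theorem~\ref{thm:main}, and $r$ is forced by the rank equation.

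For (2), given $M\in\CM(A)$ of rank $d$, set $N=\trun M\in\Sub Q_k$. The inequality \eqref{eq:rkbound} in the proof of Theorem~\ref{thm:main} gives $\dim\soc N\leq\rk M=d$, and the injective hull of $N$ inside $\Sub Q_k$ is $Q_k^{\oplus\dim\soc N}$, which embeds in $Q_k^{\oplus d}$. Composing yields $N\hookrightarrow Q_k^{\oplus d}$ as required.

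For (3), the idea is to push the GLS identity through the homogenisation. By Definition~\ref{def:homcc}, when $\rk M=d$ we have $\homcc{M}=\homog{\cluschar{\trun M}}{d}\in\irrep{d\omega}$, and by construction $\dehomog(\homcc{M})=\cluschar{\trun M}$. Parts (1) and (2) show that $\{\trun M : M\in\CM(A),\,\rk M=d\}$ coincides (up to isomorphism) with the set of all submodules of $Q_k^{\oplus d}=Q_{d\omega}$. Hence the image under $\dehomog$ of the right-hand side of \eqref{eq:psispan} equals $\langle\cluschar{N} : N\leq Q_{d\omega}\rangle$, which is exactly $\dehomog(\irrep{d\omega})$ by \eqref{eq:phispan}. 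Since $\dehomog$ is injective on $\irrep{d\omega}$ (its image consists of polynomials of degree at most $d$ with unique preimage of degree exactly $d$), the equality lifts back to \eqref{eq:psispan} in $\irrep{d\omega}$.

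The only point that requires any delicacy is the existence argument in (1): one must verify that direct summing with copies of $P_n$ interacts correctly with both $\rk$ and $\trun$, and that the socle bound $\dim\soc N\leq d$ really follows from $N\leq Q_k^{\oplus d}$; both are easy once spelled out, so there is no serious obstacle. The rest of the proof is essentially a bookkeeping exercise in assembling Theorem~\ref{thm:main}, Proposition~\ref{prp:tosubQk}, and the identification \eqref{eq:phispan}.
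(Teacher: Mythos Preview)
Your proposal is correct and follows essentially the same approach as the paper: construct the lift as $\Ntil\oplus P_n^{\oplus r}$ with $r=d-\dim\soc N$ using Theorem~\ref{thm:main}, deduce the converse from the socle bound, and then transport \eqref{eq:phispan} across the correspondence via Definition~\ref{def:homcc}. The paper's argument for \eqref{eq:psispan} is phrased slightly more tersely---it simply notes that $\irrep{d\omega}=\langle\homog{\cluschar{N}}{d}:N\leq Q_k^{\oplus d}\rangle$ and invokes Definition~\ref{def:homcc}---whereas you spell out the injectivity of $\dehomog$ on $\irrep{d\omega}$ explicitly, but the content is the same.
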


\begin{proof}
The fact that $N\leq Q_k^{\oplus d}$ implies that $\dim\soc N \geq d$.
Hence we can (and indeed must) take $M=(P_n)^r\oplus \Ntil$, where $\Ntil\in\CM(A)$ is the minimal lift of $N$ provided by Theorem~\ref{thm:main}
and $r= d - \dim\soc N$. 
For the converse, if $r$ is the number of summands of $M$ isomorphic to $P_n$,
then $\dim\soc \trun M = d-r$ and so $\trun M\leq Q_k^{\oplus (d-r)}\leq Q_k^{\oplus d}$. 

By \eqref{eq:phispan} and \eqref{eq:projection} we have 
\[
  \irrep{d\omega} = \langle \homog{\cluschar{N}}{d} : N \leq Q_k^{\oplus d} \rangle.
\]
The second claim \eqref{eq:psispan} then follows immediately from the first and Definition~\ref{def:homcc}.
\end{proof}

\New{Recall that a cluster tilting object is \emph{reachable} if it can be obtained from an `initial' cluster tilting object,
corresponding to an initial cluster, by mutation.
A rigid indecomposable object is reachable if it is a summand of a reachable cluster tilting object.
For $\GCA{k}{n}$, the standard initial clusters consist of Pl\"ucker coordinates \cite[\S4]{Sc06}
and hence the initial cluster tilting object has rank one summands.
}

\begin{theorem}\label{thm:clusvar}
The map $M\mapsto \homcc{M}$ induces a one-to-one correspondence between
the (isomorphism classes of) \New{reachable} rigid indecomposable modules of rank $d$ in $\CM(A)$
and the cluster variables of degree $d$ in $\GCA{k}{n}$.
\end{theorem}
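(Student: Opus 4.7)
The plan is to bootstrap off the analogous statement for $\Sub Q_k$ and $\k[\N]$ established by Geiss--Leclerc--Schr\"oer, using the quotient functor $\trun$ of Theorem~\ref{thm:main} to transfer the correspondence back to $\CM(A)$, and then to handle $P_n$ as an extra rigid indecomposable matched with the extra cluster variable $\Pluck{\cycint{n}}$.

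First I would split the rigid indecomposable modules in $\CM(A)$ into two types: $P_n$ itself, which is rigid (being projective), and those $M\not\isom P_n$. Since $M$ is indecomposable, in the second case $M$ has no summand isomorphic to $P_n$, so $M$ is a minimal lift of $\trun M$ in the sense of Theorem~\ref{thm:main}. By Remark~\ref{rem:main}, the functor $\trun$ then restricts to a bijection between isomorphism classes of rigid indecomposables in $\CM(A)$ other than $P_n$ and isomorphism classes of rigid indecomposables in $\Sub Q_k$. For such minimal $M$ we have $r=0$ in \eqref{eq:psiphitil}, so $\homcc{M}=\cluschartil{\trun M}$; on the other hand $\homcc{P_n}=\Pluck{\cycint{n}}$ by construction.

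Next I would invoke the GLS correspondence: by \cite[\S10]{GLS08}, the map $N\mapsto \cluschartil{N}$ is a bijection between isomorphism classes of rigid indecomposables in $\Sub Q_k$ and cluster variables of $\GCA{k}{n}$ other than $\Pluck{\cycint{n}}$ (the degree zero Pl\"ucker coordinates up to the missing one are obtained, and the exchange relation for $\Pluck{\cycint{n}}$ accounts for its omission). Composing this bijection with $\trun$ gives a bijection between rigid indecomposables in $\CM(A)$ other than $P_n$ and cluster variables other than $\Pluck{\cycint{n}}$, and adjoining the pair $(P_n,\Pluck{\cycint{n}})$ yields the full bijection claimed. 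The degree matching $\deg\homcc{M}=\rk M$ then holds on both sides: for minimal $M$ it is the identity $\rk M = \dim\soc\trun M = \deg \cluschartil{\trun M}$ from \eqref{eq:rk=dimsoc} combined with \cite[\S10.1]{GLS08}, while for $M=P_n$ we have $\rk P_n = 1 = \deg \Pluck{\cycint{n}}$.

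The only step requiring genuine care is verifying that the map $\trun$ preserves rigidity in both directions at the level of isomorphism classes, rather than just sending rigid to rigid; this however is already contained in Corollary~\ref{cor:stable}, which gives a natural isomorphism $\Ext^1_A(X,Y)\isom \Ext^1_B(\trun X,\trun Y)$ and hence lets us transport the $\Ext^1$-vanishing condition freely. The main conceptual obstacle is not a technical one in the proof, but the fact that the statement relies on the non-trivial Geiss--Leclerc--Schr\"oer result that every cluster variable of $\k[\N]$ is of the form $\cluschar{N}$ for some rigid indecomposable $N\in\Sub Q_k$; given that input, the argument above assembles the pieces.
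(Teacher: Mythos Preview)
Your proposal is correct and follows essentially the same approach as the paper's own proof: split off $P_n$ as the single extra case, use Remark~\ref{rem:main} (backed by Corollary~\ref{cor:stable}) for the bijection $\trun$ between rigid indecomposables, and then compose with the GLS bijection $N\mapsto\cluschartil{N}$ from \cite[\S10.2]{GLS08}. Your explicit degree-matching check and the remark on $\Ext^1$-preservation are minor elaborations, but the structure is the same.
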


\begin{proof}
If $M\in \CM(A)$ is indecomposable, 
then either $M=\widetilde{\trun M}$ is a minimal lift of an indecomposable
in $\Sub Q_k$ and $\homcc{M}=\cluschartil{\trun M}$,
or $M=P_n$ and $\homcc{M}=\Pluck{\cycint{n}}$.
The result then follows because we know first (see Remarks~\ref{rem:main} \& \ref{rem:CTO})
that $M\mapsto \trun M$ gives a bijection between the \New{reachable} rigid indecomposables
in $\CM(A)$ except $P_n$ and the \New{reachable} rigid indecomposables in $\Sub Q_k$
and second (see \cite[\S10.2]{GLS08}) that $N\mapsto \cluschartil{N}$
gives a bijection between the \New{reachable} rigid indecomposables in $\Sub Q_k$ and the cluster variables
in $\GCA{k}{n}$ except $\Pluck{\cycint{n}}$.
\end{proof}

\begin{remark}\label{rem:clustercorresp}
Geiss-Leclerc-Schr\"oer \cite[Thm.~9.3]{GLS08} 
also use their cluster character map $N\mapsto \cluschar{N}$ to induce
a bijection between \New{reachable} cluster tilting objects in $\Sub Q_k$ and clusters in $\k[\N]$,
or \cite[Thm.~10.2]{GLS08} with clusters in $\GCA{k}{n}$ by homogenisation
and including $\Pluck{\cycint{n}}$.

We may now formulate this more simply as saying that the bijection $M\mapsto \homcc{M}$ induces a bijection between \New{reachable} cluster tilting objects in $\CM(A)$ and clusters in $\GCA{k}{n}$.
In this way, $\CM(A)$ is a more natural categorification of the cluster structure on $\GCA{k}{n}$.
\end{remark}

%===================================================================

\affiliationone{B.T. Jensen, NTNU Norwegian Univ. of Science and Technology, 
Teknologvn. 22, 2815 Gj\o vik, Norway
\email{bernt.jensen@ntnu.no}}
% no new line
\affiliationtwo{A. King \& X.Su, Mathematical Sciences, Univ. of Bath, 
Bath BA2 7AY, U.K.
\email{A.D.King@bath.ac.uk\\ X.Su2@bath.ac.uk}}

\end{document}